\documentclass[11pt,a4paper]{article}
\usepackage{mathrsfs}
\usepackage{mathrsfs}
\usepackage{mathrsfs}
\usepackage{amsfonts}
\usepackage{amsfonts}
\usepackage{amsfonts}
\usepackage{mathrsfs}
\usepackage{amssymb}
\usepackage{color}
\usepackage{amsmath}
\usepackage{amssymb,amsmath,amsthm, amsfonts}
\usepackage{subfigure}
\usepackage{graphicx}
\usepackage{subfigure}
\usepackage{color}
\usepackage{multicol}
 \usepackage{mathrsfs}
 \allowdisplaybreaks[4]
 \usepackage{dsfont}

\setlength{\topmargin}{0pt} \setlength{\headheight}{0pt}
\setlength{\topskip}{0pt} \setlength{\oddsidemargin}{-3pt}
\setlength{\marginparwidth}{0pt} \setlength{\textwidth}{455pt}
\setlength{\textheight}{648pt} \setlength{\parindent}{1.2em}

\newtheorem{theorem}{Theorem}[section]

\numberwithin{equation}{section}

\newtheorem{lemma}[theorem]{Lemma}

\newtheorem{proposition}[theorem]{Proposition}
\newtheorem{remark}[theorem]{Remark}



\begin{document}
\parindent 9mm
\title{Stabilization for a Flexible Beam with Tip Mass and Control Matched Disturbance
\thanks{This work was supported by the Natural Science Foundation of Shaanxi Province (grant nos.
2018JM1051, 2014JQ1017), the Fundamental Research Funds for the Central Universities (Grant no.
xjj2017177)} \thanks{{\bf Mathematics Subject
Classification 2020} 37L15; 93D15; 93B51; 93B52}}
\author{Zhan-Dong Mei \thanks{School of Mathematics and Statistics, Xi'an Jiaotong
University, Xi'an 710049, China; Email: zhdmei@mail.xjtu.edu.cn }}


\date{}
\maketitle \thispagestyle{empty}
\begin{abstract}
%
%

This paper is concerned with the output feedback exponential stabilization for a flexible beam with tip mass.
When there is no disturbance, it is shown that only one non-collocated measurement is enough to exponentially stabilize
the original system by constructing an infinite-dimensional Luenberger state observer to track the state and designing an estimated state based output feedback control law. This essentially improves the existence result in [F. Conrad and O. M\"{o}rg\"{u}l, SIAM
J. Control Optim., 36 (1998), 1962-1986] where two collocated measurements including high order feedback were adopted. In the case that boundary internal uncertainty and external disturbance are considered, an infinite-dimensional disturbance estimator is constructed to estimate the state and total disturbance in real time. By virtue of the estimated state and estimated total disturbance, an output feedback control law is designed to exponentially stabilize the original system while guaranteeing the boundedness of the closed-loop system. Some illustration simulations are presented.

\vspace{0.5cm} 

%
%
\noindent {\bf Key words:} Beam equation, exponentially stability, disturbance estimator, Riesz basis.

\end{abstract}


\section{Introduction}\label{section1}

This paper is concerned with the dynamic stabilization for a flexible beam with
tip mass, which describes the SCOLE model in the sense that the moment of inertia at $x=1$ is neglected.
Such system with boundary control matched internal uncertainty and external disturbance can be written mathematically as follows
\begin{equation} \label{beem}
\left\{\begin{array}{l}
w_{tt}(x,t)+w_{xxxx}(x,t)=0,\;\; x\in (0,1), \; t>0, \\
w(0,t)=w_x(0,t)=w_{xx}(1,t)=0, \; \;   t\ge 0,\\
-w_{xxx}(1,t)+mw_{tt}(1,t)=u(t)+F(t), \;\;  t\ge 0,\\
{\color{blue}y(t)=(w_{xx}(0,t),w(1,t))^T}, \;\;  t\ge 0,
\end{array}\right.
\end{equation}
where $w(x,t)$ is the displacement of the beam at position $x$ and time $t$, $m>0$ is the tip mass,
$u(t)$ is the boundary shear control (or input) at the free end of the beam, $F(t)=v(w(\cdot,t),w_t(\cdot,t))+d(t)$, $v:H^2(0,1)\times L^2(0,1)\rightarrow \mathds{R}$ is the internal uncertainty, $d(t)$ is the external disturbance, $y(t)$ is the observation (output) with $w_{xx}(0,t)$ being bending strain at $x=0$. {\color{blue}System (\ref{beem}) can be derived by Hamilton principle [11,12].}

We consider system (\ref{beem}) in the state Hilbert space $\mathbf{H}_1=H_E^2(0,1)\times L^2(0,1)\times\mathds{R}$ with inner product induced norm given by
$\|(f,g,\eta)\|^2_{\mathbf{H}_1} = \int_0^1[|f''(x)|^2$
$+|g(x)|^2]dx+\frac{1}{m} |\eta|^2,(f,g,\eta)\in \mathbf{H}_1,$ where $\ H_E^2(0,1)=$
$\{f\in H^2(0,1):f(0)=f'(0)=0\}$.
Define the operator $\mathbf{A}_1:D(\mathbf{A}_1)(\subset \mathbf{H}_1)\rightarrow \mathbf{H}_1$ by
$\mathbf{A}_1(f,g,\eta)=(g,-f^{(4)},f'''(1)),\; \forall\;{\color{blue}(f,g,\eta)}\in D(\mathbf{A}_1),$
   $D(\mathbf{A}_1)=\{(f,g,\eta)\in {\color{blue}(H^4(0,1)\bigcap H^2_E(0,1))\times H^2_E(0,1)\times \mathds{R}}|f''(1)=0,\eta=mg(1)\}.$
It is routine to check that $\mathbf{A}_1$ is a skew-adjoint operator and thereby it generates a unity group.
System (\ref{beem}) is then abstractly written as
\begin{align*}
   & \frac{d}{dt}\big(w(\cdot,t),
                w_t(\cdot,t),
                mw_t(1,t)\big)
=\mathbf{A}_1\big(w(\cdot,t),
                w_t(\cdot,t),\\
                & mw_t(1,t)\big)+\mathbf{B}_1[u(t)+F(t)],
\end{align*}
where $\mathbf{B}_1=(0,0,1)^T$ is a bounded linear operator.
\begin{proposition}
Suppose that $v:H^2(0,1)\times L^2(0,1)\rightarrow \mathds{R}$ is continuous and satisfies global Lipschitz condition in $H^2(0,1)\times L^2(0,1)$. Then, for any $(w(\cdot,0),w_t(\cdot,0),$
$mw_t(1,0))\in \mathbf{H}_1$, $u,d \in L^2_{loc}(0,\infty)$,
there exists a unique global solution (mild solution) to (\ref{beem}) such that $(w(\cdot,t),w_t(\cdot,t),$
$mw_t(1,t))\in C(0,\infty;\mathbf{H}_1)$.
\end{proposition}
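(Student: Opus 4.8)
The plan is to recast (\ref{beem}) as an abstract semilinear Cauchy problem on $\mathbf{H}_1$ and to invoke the standard globally-Lipschitz perturbation theory. Write $z(t)=(w(\cdot,t),w_t(\cdot,t),mw_t(1,t))$ and $z_0=(w(\cdot,0),w_t(\cdot,0),mw_t(1,0))$, and define $\mathcal{V}:\mathbf{H}_1\to\mathds{R}$ by $\mathcal{V}(f,g,\eta)=v(f,g)$. On $H^2_E(0,1)$ the seminorm $\|f''\|_{L^2(0,1)}$ is equivalent to the full $H^2$-norm (a Poincar\'e-type inequality using $f(0)=f'(0)=0$), so the global Lipschitz continuity of $v$ on $H^2(0,1)\times L^2(0,1)$ transfers to a global Lipschitz bound for $\mathcal{V}$ on $\mathbf{H}_1$; denote its constant by $L_{\mathcal{V}}$. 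Since $\mathbf{A}_1$ is skew-adjoint it generates a $C_0$-group $\{e^{\mathbf{A}_1 t}\}_{t\in\mathds{R}}$ of isometries (as already noted), and $\mathbf{B}_1$ is bounded, so (\ref{beem}) becomes $\dot z(t)=\mathbf{A}_1 z(t)+\mathbf{B}_1[u(t)+\mathcal{V}(z(t))+d(t)]$, $z(0)=z_0$, and a mild solution on $[0,\tau]$ is by definition a fixed point in $C([0,\tau];\mathbf{H}_1)$ of
\[
(\mathcal{P}z)(t)=e^{\mathbf{A}_1 t}z_0+\int_0^t e^{\mathbf{A}_1(t-s)}\mathbf{B}_1\big[u(s)+\mathcal{V}(z(s))+d(s)\big]\,ds.
\]

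First I would check $\mathcal{P}$ maps $C([0,\tau];\mathbf{H}_1)$ into itself: $t\mapsto e^{\mathbf{A}_1 t}z_0$ is continuous by strong continuity of the group; $t\mapsto\int_0^t e^{\mathbf{A}_1(t-s)}\mathbf{B}_1[u(s)+d(s)]\,ds$ is continuous because $u,d\in L^2_{loc}(0,\infty)\subset L^1_{loc}(0,\infty)$ and the convolution of a $C_0$-semigroup with an $L^1_{loc}$ forcing is a continuous $\mathbf{H}_1$-valued function; and $t\mapsto\int_0^t e^{\mathbf{A}_1(t-s)}\mathbf{B}_1\mathcal{V}(z(s))\,ds$ is continuous since $s\mapsto\mathcal{V}(z(s))$ is continuous and bounded on $[0,\tau]$ whenever $z\in C([0,\tau];\mathbf{H}_1)$. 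Next, for $z_1,z_2\in C([0,\tau];\mathbf{H}_1)$, using $\|e^{\mathbf{A}_1 t}\|=1$ and the Lipschitz bound,
\[
\|(\mathcal{P}z_1)(t)-(\mathcal{P}z_2)(t)\|_{\mathbf{H}_1}\le \|\mathbf{B}_1\|\,L_{\mathcal{V}}\int_0^t\|z_1(s)-z_2(s)\|_{\mathbf{H}_1}\,ds\le \tau\,\|\mathbf{B}_1\|\,L_{\mathcal{V}}\,\|z_1-z_2\|_{C([0,\tau];\mathbf{H}_1)}.
\]
Choosing $\tau_0$ with $\tau_0\|\mathbf{B}_1\|L_{\mathcal{V}}<1$ makes $\mathcal{P}$ a contraction, and the Banach fixed point theorem yields a unique mild solution on $[0,\tau_0]$.

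The decisive feature is that $\tau_0$ depends only on $\|\mathbf{B}_1\|$ and $L_{\mathcal{V}}$, not on $z_0$ nor on $u,d$. Hence I would continue the solution by repeating the construction on $[\tau_0,2\tau_0],[2\tau_0,3\tau_0],\dots$, each time with the endpoint of the previous piece as the new initial state and the forcing (still $L^2$, hence $L^1$) restricted to the current interval; the pieces match continuously at the junction times, giving $z\in C([0,\infty);\mathbf{H}_1)$. Global uniqueness follows from uniqueness on each subinterval, or equivalently from a Gronwall estimate applied to the difference of two mild solutions on an arbitrary $[0,T]$. I do not expect a serious obstacle: this is the textbook globally-Lipschitz semilinear theory (cf. Pazy), and the only points that need a little care are the norm equivalence on $H^2_E(0,1)$ used to pass the Lipschitz hypothesis from $v$ to $\mathcal{V}$, and the $\mathbf{H}_1$-continuity in $t$ of the convolution term when $u,d$ are merely $L^2_{loc}$.
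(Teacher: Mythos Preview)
Your argument is correct and is precisely the standard globally-Lipschitz semilinear perturbation argument (\`a la Pazy) that the paper has in mind: the paper's own proof consists only of a reference to \cite[Proposition 1.1]{Zhou2018a}, whose procedure is exactly the contraction-mapping scheme you have written out. The one observation you made explicit that the paper leaves implicit is the norm equivalence on $H^2_E(0,1)$ needed to transfer the Lipschitz hypothesis on $v$ to $\mathcal{V}$ on $\mathbf{H}_1$, which is indeed the only point requiring a moment's care.
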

\begin{proof}\ \
The proof can be obtained directly from the procedure of \cite[Proposition 1.1]{Zhou2018a}.
\end{proof}

In the case that the disturbance is not considered ($F(t)\equiv 0$), system (\ref{beem}) has been extensively discussed \cite{Balakrishnan1991,Chentouf2006,Conrad1998,Guo2001b,Littman1988,Rao1995,Triggiani1989,Zhao2010}.
It is well-know that if $m=0$ (no tip mass is considered), then (\ref{beem}) devolves into Euler-Bernoulli beam equation with shear boundary control, and the boundary velocity feedback $u(t)=-\alpha w_t(1,t)(\alpha>0)$ has been proved to exponentially stabilize the original system by virtue of multiplier method or Riesz basis approach \cite{Chen1987,Guo2001}. However, in presence of tip mass, boundary velocity feedback $u(t)=-\alpha w_t(1,t) $ can be regarded as a compact perturbation of the free system ($u(t)=0$) \cite{Rao1995}; such a compact perturbation makes the closed-loop system strongly stable \cite{Littman1988} but {\color{blue}cannot} guarantee the exponential stability \cite{Triggiani1989}. In order to exponentially stabilize the system, apart from the velocity, high order feedback such as $w_{xxxt}(1,t)$ should also be considered \cite{Rao1995}. In \cite{Conrad1998}, it was proved by virtue of energy multiplier approach that for any $\beta>0$ the closed-loop system under the feedback control law $u(t)=-\alpha w_t(1,t)+\beta w_{xxxt}(1,t)$, is exponentially stable. Furthermore, the authors in \cite{Conrad1998} also proved that there exist a set of generalized eigenfunctions of the closed-loop system that forms a Riesz basis provided $m=\alpha\beta$. By virtue of their Riesz basis generation theory in \cite{Guo2001}, Guo \cite{Guo2001b} proved that the Riesz basis property indeed hold for all the cases $m,\alpha,\beta>0$.

The aforemention literatures considered collocated control and observation, where the actuators and sensors lie in the same boundary place $x=1$.  The collocated design approach was firstly introduced in circuit theory in 1950s \cite{Guillemin1957}.
However, in practical control, the performance of closed-loop system under collocated output feedback may be not so good \cite{Cannon1984}; non-collocated control design has been widely used \cite{Cannon1984,Wu2001}.
Since the closed-loop system under non-collocated design is usually non-dissipative, it is hard to apply the traditional Lyapunov method or multiplier approach to discuss the stability. To overcome this difficulty, instead of direct output feedback control design, compensator based controller design method is a possible choice. In 1975, Gressang and Lamont \cite{Gressang1975} firstly presented in semigroup frame a generalized Luenberger stabilizing compensator for infinite-dimensional systems with bounded input and output operators.
The authors in \cite{Curtain1986,Lasiecka1995} discussed finite-dimensional compensators for infinite-dimensional linear system with unbounded input and bounded/unbounded output operators.  By virtue of backstepping observers, Smyshlyaev and Krstic \cite{Smyshlyaev2005}
constructed an estimated state feedback controller to stabilize a class of one-dimensional parabolic PDEs.
In \cite{Deguenon2006}, an abstract observer was designed for a class of well-posed regular infinite-dimensional systems. Motivated by \cite{Deguenon2006} and \cite{Smyshlyaev2005}, Guo, Wang and Yang proved in \cite{Guo2008} that the measurement $y(t)=w_{xx}(0,t)$ can be used to exponentially stabilize the original system with $m=0$ by designing an infinite dimensional state observer and using estimated state based feedback control law.
However, because of the complexity, for the case $m\neq0$, the non-collocated control of (\ref{beem}) is still a long standing unsolved problem.
In the appendix, with only one non-collocated measurement $w_{xx}(0,t)$, we shall present an estimated state based output feedback control law (\ref{control}) to exponentially stabilize system (\ref{beem}) with $F(t)\equiv 0$. This is a great improvement of the references \cite{Conrad1998,Guo2001b}, where two collocated measurements $w_t(1,t)$ and $w_{xxxt}(1,t)$ with $w_{xxxt}(1,t)$ being high order feedback were used.

Because of the influences of both model uncertainty and external environment, internal uncertainty and external disturbance should also be taken into consideration for practical control problem ($F(t)\neq 0$). However, even a small amount of disturbance in the stabilizing boundary output feedback design could destabilize the system. The stabilization of infinite-dimensional system with disturbance represents a difficult mathematical challenge. There emerged many well-developed methods coping with uncertainty in control system problem, such as Lyapunov approach \cite{Guo2014a,Jin2015}, sliding mode control (SMC) \cite{Cheng2011,Guo2013a}, backstepping approach \cite{GuoW2011a,GuoW2013b}, adaptive control \cite{Bresch-Pietri2014,GuoW2013a,Krstic2010} and active disturbance rejection control (ADRC) \cite{Feng2017a,Guo2013a,Han2009,Zhou2018a,Zhou2020}. Since the core idea is real time estimating/cancelation of uncertainties through an extended state observer (ESO) and compensating the disturbance in feedback loop, ADRC can significantly reduce the control energy. In the earlier efforts for PDEs by ADRC like \cite{Guo2013b,Jin2015}, the disturbance was dealt with by ODEs reduced from the associated PDEs through some special test functions for which variation of the external disturbance is supposed to be slow and more critically, the high gain must be used in ESO to estimate the total disturbance; the internal uncertainties {\color{blue}cannot} be coped with by such method.
In \cite{Feng2017a}, a new infinite-dimensional disturbance estimator was developed to relax such restricts of the conventional ESO for a class of anti-stable wave equations with external disturbance and three measurements. Later on, Zhou and Weiss \cite{Zhou2018b} used the same method to improve the result in \cite{Feng2017a} with only two measurements. The approach was then  applied to multi-dimensional
wave equation with internal uncertainty and external disturbance \cite{Zhou2017a}, Euler-Bernoulli beam equation with boundary shear force and internal uncertainty and external disturbance being considered simultaneously \cite{Zhou2018a}, and Euler-Bernoulli beam equation with
moment boundary control matched internal uncertainty and external disturbance \cite{Zhou2020}.

In the appendix, it points out that, in presence of disturbance, the stabilizing control law (\ref{control}) should be redesigned. System (\ref{beem}) with $v(w,w_t)=0$ and $d(t)$ being uniformly bounded was firstly studied by Ge et al. \cite{Ge2011a,Ge2011b}; the adaptive boundary control strategy was applied to cope with system parametric uncertainties and disturbance observer for attenuating the effect of the external disturbances. In \cite{Li2017}, Li, Xu and Han
discussed system (\ref{beem}) with $v(w,w_t)=0$ and $m=\alpha\beta$, but the internal uncertainty as well as the general case $m\neq \alpha\beta$ has not been taken into consideration. Moreover, \cite{Li2017} used three measurements including the high order collocated feedbacks $w_{xxx}(1,t)$ and $w_{xxxt}(1,t)$. In this paper, by using two measurements $w_{xx}(0,t)$ and $w(1,t)$, an infinite-dimensional disturbance estimator shall be constructed to estimate the original state and total disturbance in real time. Based on the estimated state and estimated total disturbance, an output feedback control law is designed to exponentially stabilize the original system while guarantee the boundedness of the closed-loop system. {\it The main contribution for the system (\ref{beem}) with disturbance ($F(t)\neq 0$) is: 1) the internal uncertainty is taken into consideration, 2) the high order collocated feedbacks $w_{xxx}(1,t)$ and $w_{xxxt}(1,t)$ are not used, 3) we consider arbitrary $m,\alpha,\beta>0$ while \cite{Li2017} just solved the special case $\alpha,\beta>0$ and $m=\alpha\beta$,
4) since no additional state observer is used, our control strategy is concise and energy-saving, and our method provides an idea to improve the results of \cite{Zhou2018a,Zhou2018b}, see Remark \ref{xiangdui}.}

The arrangement of this paper is as follows. In Section 2, we present an infinite-dimensional Luenberger state observer for system (\ref{beem}) to estimated total disturbance and state in real time. Moreover an estimated total disturbance and estimated state based control law is designed.
In Section 3, the exponential stability of a couple subsystem including the original equation of the closed-loop is concluded. Moreover, the other state of the closed-loop system is proved to be bounded. In Section 4, we present some simulations in order to illustrate our theory results.
In the appendix, we give the observer and controller design for system (\ref{beem}) without disturbance.

\section{Disturbance estimator and controller design}\label{disturbancestab}
In this section we shall design an infinite-dimensional disturbance estimator to estimate the total disturbance $F(t)$ of system (\ref{beem}).
We first introduce an auxiliary system to transfer the total disturbance $F(t)$ into an exponentially stable system:
\begin{equation} \label{transferl}
\left\{\begin{array}{l}
l_{tt}(x,t)+l_{xxxx}(x,t)=0, \\
l(0,t)=l_{xx}(1,t)=0, \\
l_{xx}(0,t)=c l_{xt}(0,t)+\gamma l_x(0,t)+w_{xx}(0,t),  \\
-l_{xxx}(1,t)+ml_{tt}(1,t)=u(t),
\end{array}\right.
\end{equation}
where $c,\; \gamma>0$ are tuning parameters. Obviously, the auxiliary system (\ref{transferl}) is the same as the observer (\ref{transfer11}) designed for the system without disturbance.

Set $\widehat{l}(x,t)=l(x,t)-w(x,t)$ to get
\begin{equation} \label{perror}
\left\{\begin{array}{l}
\widehat{l}_{tt}(x,t)+\widehat{l}_{xxxx}(x,t)=0, \\
\widehat{l}(0,t)=\widehat{l}_{xx}(1,t)=0,\\
\widehat{l}_{xx}(0,t)=c \widehat{l}_{xt}(0,t)+\gamma \widehat{l}_x(0,t),\\
-\widehat{l}_{xxx}(1,t)+m\widehat{l}_{tt}(1,t)=-F(t).
\end{array}\right.
\end{equation}
We consider system (\ref{transfer11}) in the state Hilbert space $\mathbf{H}_2=H_L^2(0,1)\times L^2(0,1)\times\mathds{R},\ H_L^2(0,1)=\{f|f\in H^2(0,1),f(0)=0\}$, with inner product induced norm given by
$\|(f,g,\eta)\|^2_{\mathbf{H}_2} = \int_0^1[|f''(x)|^2$
$+|g(x)|^2]dx+\gamma |f'(0)|^2+\frac{|\eta|^2}{m},$
$(f,g,\eta)\in \mathbf{H}_2.$

Then (\ref{perror}) is written abstractly by
\begin{align}\label{perrorsemigroup}
    \nonumber&\frac{d}{dt}(\widehat{l}(\cdot,t),\widehat{l}_t(\cdot,t),m\widehat{l}(1,t))\\
    &=\mathbf{A}_2(\widehat{l}(\cdot,t),\widehat{l}_t(\cdot,t),m\widehat{l}(1,t))-\mathbf{B}_{2}F(t),
\end{align}
where $\mathbf{A}_2:D(\mathbf{A}_2)(\subset \mathbf{H}_2)\rightarrow \mathbf{H}_2$ is defined by
$\mathbf{A}_2(f,g,\eta)=(g,-f^{(4)},f'''(1)),\; \forall\;(f,g,\eta)\in D(\mathbf{A}_2),$
   ${\color{blue}D(\mathbf{A}_2)=\{(f,g,\eta)\in (H^4(0,1)\times H^2_L(0,1))\times H^2_L(0,1)\times \mathds{R}|}$
   $ f''(0)=c g'(0)+\gamma f'(0),
   f''(1)$
   $=0,\eta=mg(1))\},$
$\mathbf{B}_2=(0,0,1)^T$ is a bounded linear operator.
Use \cite[Lemma A.1 and A.2]{Zhou2018a}, we derive the following lemma.

\begin{lemma}\label{admissible}
Assume that $d\in L^\infty(0,\infty)$ (or $d\in L^2(0,\infty))$, $f: H^2(0,1)\times L^2(0,1)\rightarrow \mathds{R}$ is continuous and system (\ref{beem}) admits a unique bounded solution $(w(\cdot,t),w_t(\cdot,t))\in C(0,\infty;H^2(0,1)\times L^2(0,1))$. Then for any initial value $(\widehat{l}(\cdot,0),\widehat{l}_t(\cdot,0),m\widehat{l}_t(1,0))\in \mathbf{H}_2$, there exists a unique solution
to system (\ref{perror}) such that $(\widehat{l}(\cdot,t),\widehat{l}_t(\cdot,t),m\widehat{l}_t(1,t))\in C(0,\infty;\mathbf{H}_2)$ and
$\|(\widehat{l}(\cdot,t),\widehat{l}_t(\cdot,t),m\widehat{l}_t(1,t))\|_{\mathbf{H}_2}$
$< +\infty$.
Moreover, if $\lim_{t\rightarrow \infty}v(w(\cdot,t),w_t(\cdot,t))=0$ and $d\in L^2(0,\infty)$,
then $\lim_{t\rightarrow \infty}\|(\widehat{l}(\cdot,t),\widehat{l}_t(\cdot,t),$
$m\widehat{l}_t(1,$
$t))\|_{\mathbf{H}_2}=0$.
\end{lemma}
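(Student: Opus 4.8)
The plan is to read (\ref{perror}) in its abstract form (\ref{perrorsemigroup}) and to treat $F(t)=v(w(\cdot,t),w_t(\cdot,t))+d(t)$ as an external forcing entering through the \emph{bounded} operator $\mathbf{B}_2$. The one nontrivial ingredient I would invoke is that $\mathbf{A}_2$ generates an exponentially stable $C_0$-semigroup on $\mathbf{H}_2$, i.e.\ $\|e^{\mathbf{A}_2 t}\|\le Me^{-\omega t}$ for some $M,\omega>0$ and all $t\ge0$; this is exactly the exponential decay of (\ref{perror}) with $F\equiv0$, which is established when the observer (\ref{transfer11}) for the disturbance-free system is analyzed (see the appendix and the analogous step in \cite{Zhou2018a}). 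Under the standing hypotheses, $t\mapsto v(w(\cdot,t),w_t(\cdot,t))$ is continuous and bounded, since $v$ is continuous (indeed globally Lipschitz, hence bounded on bounded sets) and $(w(\cdot,t),w_t(\cdot,t))$ is a bounded trajectory in $H^2(0,1)\times L^2(0,1)$; together with $d\in L^\infty(0,\infty)$ or $d\in L^2(0,\infty)$ this gives $F\in L^2_{loc}(0,\infty)$, so the standard theory of $C_0$-semigroups perturbed by a bounded operator produces a unique mild solution
\[
\widehat{L}(t):=\big(\widehat{l}(\cdot,t),\widehat{l}_t(\cdot,t),m\widehat{l}_t(1,t)\big)=e^{\mathbf{A}_2 t}\widehat{L}(0)-\int_0^t e^{\mathbf{A}_2(t-s)}\mathbf{B}_2F(s)\,ds,
\]
which lies in $C(0,\infty;\mathbf{H}_2)$.

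For the boundedness assertion I would estimate the two terms in this variation-of-constants formula separately. The semigroup term is dominated by $Me^{-\omega t}\|\widehat{L}(0)\|_{\mathbf{H}_2}$, hence is bounded (and in fact tends to $0$). For the convolution, if $d\in L^\infty(0,\infty)$ then $F\in L^\infty(0,\infty)$ and, using $\|e^{\mathbf{A}_2(t-s)}\|\le Me^{-\omega(t-s)}$, the convolution is bounded by $\tfrac{M\|\mathbf{B}_2\|}{\omega}\|F\|_{L^\infty}$; if only $d\in L^2(0,\infty)$, split $F$ into its bounded part $v(w(\cdot,\cdot),w_t(\cdot,\cdot))$, estimated as above, and its $L^2$ part $d$, and bound the $d$-convolution by the Cauchy--Schwarz inequality by $\tfrac{M\|\mathbf{B}_2\|}{\sqrt{2\omega}}\|d\|_{L^2}$. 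In either case $\sup_{t\ge0}\|\widehat{L}(t)\|_{\mathbf{H}_2}<\infty$.

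For the convergence statement I would assume additionally $v(w(\cdot,t),w_t(\cdot,t))\to0$ and $d\in L^2(0,\infty)$, and write $F=F_1+d$ with $F_1(t):=v(w(\cdot,t),w_t(\cdot,t))\to0$. The semigroup term vanishes as $t\to\infty$ by exponential stability. For $\int_0^t e^{\mathbf{A}_2(t-s)}\mathbf{B}_2F_1(s)\,ds$: given $\varepsilon>0$, choose $T$ with $|F_1(s)|<\varepsilon$ for $s\ge T$ and split the integral at $T$; the contribution of $[0,T]$ carries the decaying factor $e^{-\omega(t-T)}$ and hence tends to $0$ as $t\to\infty$, while the contribution of $[T,t]$ has norm $\le\tfrac{M\|\mathbf{B}_2\|}{\omega}\varepsilon$, so $\limsup_{t\to\infty}$ of the norm is $\le\tfrac{M\|\mathbf{B}_2\|}{\omega}\varepsilon$; since $\varepsilon$ is arbitrary, this term tends to $0$. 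For $\int_0^t e^{\mathbf{A}_2(t-s)}\mathbf{B}_2 d(s)\,ds$ the same split works, now bounding the $[T,t]$-part by Cauchy--Schwarz by $\tfrac{M\|\mathbf{B}_2\|}{\sqrt{2\omega}}\|d\|_{L^2(T,\infty)}$, which is small for $T$ large because $d\in L^2(0,\infty)$. Combining, $\|\widehat{L}(t)\|_{\mathbf{H}_2}\to0$ as $t\to\infty$.

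The genuinely hard step is the exponential stability of $e^{\mathbf{A}_2 t}$ (equivalently, the exponential decay of the error system (\ref{perror}) with $F\equiv0$); everything after it is the routine principle that an exponentially stable system driven through a bounded operator by an $L^\infty$ (respectively, vanishing-plus-$L^2$) forcing has a bounded (respectively, vanishing) state, and with this in hand the lemma follows exactly as in \cite[Lemmas A.1 and A.2]{Zhou2018a}. A secondary point to handle carefully is the well-posedness prerequisite $F\in L^2_{loc}(0,\infty)$, which relies on the continuity of $v$ together with the continuity and boundedness of the trajectory $(w(\cdot,t),w_t(\cdot,t))$ supplied by the hypotheses.
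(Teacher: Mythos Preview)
Your proposal is correct and follows essentially the same approach as the paper: the paper's proof simply notes that $\mathbf{B}_2$ is bounded and invokes \cite[Lemma A.1 and A.2]{Zhou2018a}, while you have spelled out precisely the variation-of-constants argument those lemmas encapsulate, correctly identifying the exponential stability of $e^{\mathbf{A}_2 t}$ (Lemma \ref{A2stable} in the appendix) as the key input. Your more detailed treatment of the $L^\infty$ versus $L^2$ cases and the $\varepsilon$-splitting for the convergence statement is exactly what lies behind the cited lemmas.
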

\begin{proof}\ \
Since $\mathbf{B}_2$ is a bounded linear operator, the conclusions are obtained directly by the assumptions and \cite[Lemma A.1 and A.2]{Zhou2018a}.
\end{proof}

For error system (\ref{perror}) we design the following observer
\begin{equation} \label{errorobserver}
\left\{\begin{array}{l}
z_{tt}(x,t)+z_{xxxx}(x,t)=0,\\
z(0,t)=z_{xx}(1,t)=0,z(1,t)=l(1,t)-w(1,t),\\
z_{xx}(0,t)=c z_{xt}(0,t)+\gamma z_x(0,t).
\end{array}\right.
\end{equation}
Set $\widehat{z}(x,t)=z(x,t)-\widehat{l}(x,t)$ to derive
\begin{equation} \label{erroerrorobserver}
\left\{\begin{array}{l}
\widehat{z}_{tt}(x,t)+\widehat{z}_{xxxx}(x,t)=0, \\
\widehat{z}(0,t)=\widehat{z}(1,t)=\widehat{z}_{xx}(1,t)=0, \\
\widehat{z}_{xx}(0,t)=c \widehat{z}_{xt}(0,t)+\gamma \widehat{z}_x(0,t).
\end{array}\right.
\end{equation}
Consider system (\ref{erroerrorobserver}) in the space $\mathbb{H}={\color{blue}H^2_K(0,1)}\times L^2(0,1)$, ${\color{blue}H^2_K(0,1)}=\{f\in H^2(0,1)|f(0)=f(1)=0\}$ with inner product
induced norm $\|(f,g)\|^2_{\mathbb{H}}=\int_0^1[|f''(x)|^2+|g(x)|^2]dx+\gamma|f'(0)|^2$.
Define $\mathbb{A}(f,g)=(g,-f^{(4)}), \forall (f,g)\in D(\mathbb{A}),
   D(\mathbb{A})=\{(f,g)\in ({\color{blue}H^2_K(0,1)}\bigcap $
   $H^4(0,1))\times {\color{blue}H^2_K(0,1)}|f''(1)$
   $=0,f''(0)=c g'(0)+\gamma f'(0)\}.$
Then, system (\ref{erroerrorobserver}) can be written abstractly as
$\frac{d}{dt}(\widehat{z}(\cdot,t),\widehat{z}_t(\cdot,t))=\mathbb{A}(\widehat{z}(\cdot,t),\widehat{z}_t(\cdot,t)).$

\begin{lemma}\label{Riesz}
There exist a sequence of generalized eigenfunctions of $\mathbb{A}$ which forms Riesz basis for $\mathbb{H}$;
$\mathbb{A}$ is a generator of exponentially stable $C_0$-semigroup.
\end{lemma}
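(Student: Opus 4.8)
To prove Lemma~\ref{Riesz}, the plan is to follow the by-now standard spectral/Riesz-basis route for beam equations with boundary dissipation, in the spirit of \cite{Conrad1998,Guo2001,Guo2001b}.

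First I would check that $0\in\rho(\mathbb{A})$ and that $\mathbb{A}^{-1}$ is compact: solving $\mathbb{A}(f,g)=(p,q)$ for given $(p,q)\in\mathbb{H}$ amounts to a fourth order boundary value problem for $f$ subject to the five boundary conditions built into $D(\mathbb{A})$, which is uniquely solvable, and the range of $\mathbb{A}^{-1}$ lies in a space that embeds compactly into $\mathbb{H}$ by the Sobolev--Rellich theorem; hence $\sigma(\mathbb{A})$ consists of isolated eigenvalues of finite algebraic multiplicity. In parallel, the energy identity underlying (\ref{erroerrorobserver}) gives $\mathrm{Re}\langle\mathbb{A}X,X\rangle_{\mathbb{H}}=-c\,|\widehat{z}_{xt}(0)|^{2}\le 0$, so $\mathbb{A}$ is dissipative, and together with $0\in\rho(\mathbb{A})$ this yields, via Lumer--Phillips, that $\mathbb{A}$ generates a $C_{0}$-semigroup of contractions; in particular every eigenvalue satisfies $\mathrm{Re}\,\lambda\le 0$.

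Next I would carry out the asymptotic analysis of the eigenpairs. Writing $\mathbb{A}(f,g)=\lambda(f,g)$ as $g=\lambda f$, $f^{(4)}=-\lambda^{2}f$, and setting $\lambda^{2}=-\tau^{4}$, the eigenfunction $f$ is a combination of $\cosh\tau x,\ \sinh\tau x,\ \cos\tau x,\ \sin\tau x$; imposing $f(0)=f(1)=f''(1)=0$ together with the spectrally-loaded condition $f''(0)=(c\lambda+\gamma)f'(0)$ reduces the problem to a characteristic equation $\Delta(\tau)=0$. Expanding $\Delta(\tau)$ for $|\tau|\to\infty$ in the relevant sector, the dominant balance produces an expansion $\tau_{n}=n\pi+\tau_{0}+O(1/n)$ whose correction term $\tau_{0}$ has a definite sign, so that $\mathrm{Re}\,\lambda_{n}$ stays uniformly below a negative constant while $|\mathrm{Im}\,\lambda_{n}|\to\infty$; the corresponding normalized generalized eigenfunctions $\{\Phi_{n}\}$ are $\ell^{2}$-close to the eigenfunctions $\{\Psi_{n}\}$ of the undamped companion operator, which form a Riesz basis of $\mathbb{H}$. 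Invoking a Bari-type generation theorem (such as the one of \cite{Guo2001} used also in \cite{Zhou2018a}), it then suffices to verify (i) the quadratic closeness $\sum_{n}\|\Phi_{n}-\Psi_{n}\|_{\mathbb{H}}^{2}<\infty$ and (ii) the completeness of $\{\Phi_{n}\}$ (augmented by the generalized eigenfunctions attached to the finitely many low eigenvalues) in $\mathbb{H}$; completeness can be obtained from a resolvent-growth estimate along vertical lines or by ruling out residual spectrum of $\mathbb{A}^{*}$. This gives the Riesz basis.

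Finally, the Riesz basis of generalized eigenfunctions forces the spectrum-determined growth condition $\omega(\mathbb{A})=\sup\{\mathrm{Re}\,\lambda:\lambda\in\sigma(\mathbb{A})\}$. By the asymptotics, $\mathrm{Re}\,\lambda_{n}\le-\kappa<0$ for all large $n$, so one only needs to exclude eigenvalues on the imaginary axis: if $\lambda\ne 0$ with $\mathrm{Re}\,\lambda=0$ had an eigenvector $(\phi,\lambda\phi)$, the dissipation identity would force $\phi'(0)=0$, hence $\phi''(0)=(c\lambda+\gamma)\phi'(0)=0$, and combined with $\phi(0)=0$ the fourth order equation $\phi^{(4)}=-\lambda^{2}\phi$ is over-determined so that $\phi(1)=\phi''(1)=0$ only admits $\phi\equiv 0$ (the value $\lambda=0$ being already excluded since $0\in\rho(\mathbb{A})$). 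As only finitely many eigenvalues can lie outside the strip $\{\mathrm{Re}\,\lambda<-\kappa\}$ and each of them has strictly negative real part, $\sup\{\mathrm{Re}\,\lambda:\lambda\in\sigma(\mathbb{A})\}<0$, whence $\mathbb{A}$ generates an exponentially stable semigroup. The main obstacle is the pair of steps hidden in the third paragraph — the asymptotic expansion of $\Delta(\tau)$ and the verification of $\sum_{n}\|\Phi_{n}-\Psi_{n}\|_{\mathbb{H}}^{2}<\infty$ — because the boundary condition $f''(0)=(c\lambda+\gamma)f'(0)$ feeds the spectral parameter into the boundary data and one must track the resulting corrections to the eigenvalues and to the normalized eigenfunctions with enough precision; everything else follows the machinery of \cite{Conrad1998,Guo2001} essentially verbatim.
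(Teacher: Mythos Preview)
Your plan matches the paper's proof almost step for step: compact resolvent, explicit characteristic equation, asymptotics of $\tau_n$ and of the eigenfunctions, comparison with a skew-adjoint reference operator via the Bari-type theorem of \cite{Guo2001}, spectrum-determined growth, and exclusion of imaginary eigenvalues (your over-determination argument for the last point is exactly the paper's Step~3). One addition you make that the paper omits is the dissipativity/Lumer--Phillips step; this is correct (indeed $\mathrm{Re}\,\langle\mathbb{A}(f,g),(f,g)\rangle_{\mathbb{H}}=-c|g'(0)|^{2}$) and yields $\mathrm{Re}\,\lambda\le 0$ more cleanly than the paper's direct eigenvector computation, though the paper then gets generation only after the Riesz basis is in hand.

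One point needs correction. Your ansatz $\tau_n=n\pi+\tau_0+O(1/n)$ with ``$\tau_0$ has a definite sign, so that $\mathrm{Re}\,\lambda_n$ stays uniformly below a negative constant'' mislocates the source of the damping. The paper obtains $\tau_n=(n+\tfrac14)\pi+O(n^{-1})$; the leading shift $\pi/4$ is \emph{real}, so under $\lambda_n=i\tau_n^{2}$ it contributes only to $\mathrm{Im}\,\lambda_n$. The uniform negative real part $\mathrm{Re}\,\lambda_n\to -2/c$ comes from the imaginary part of the $O(n^{-1})$ correction, namely $\tau_n=(n+\tfrac14)\pi-\dfrac{1}{ic(n+1/4)\pi}+O(n^{-2})$. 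You must therefore push the expansion one order further than your sketch indicates; this is precisely the ``main obstacle'' you flagged. For the comparison step, the paper's reference operator is the system with $1/c=\gamma=0$ (boundary condition $\widehat z_{xt}(0,t)=0$), and the quadratic closeness $\sum\|\Phi_n-\Psi_n\|_{\mathbb{H}}^{2}<\infty$ is verified after passing through the isometry $(f,g)\mapsto(f'',g,\gamma f'(0))$ into $L^{2}(0,1)\times L^{2}(0,1)\times\mathds{C}$, which absorbs the nonstandard boundary term $\gamma|f'(0)|^{2}$ in the $\mathbb{H}$-norm.
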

\begin{proof}\ \
It is routine to verify that $\mathbb{A}$ is a densely defined and discrete operator.
We split {\color{blue}the} rest of the proof into several steps and
omit some details.

{\bf Step 1.} We show that, for any $\lambda=i\tau^2\in\sigma(\mathbb{A})$, there corresponds one eigenfunction of the form
$(f,\lambda f)$, where $f$ is given by
 \begin{align}\label{fxequ3}
    \nonumber &f(x)=\sinh\tau(x-1)+\sin\tau \cosh\tau x-\cos\tau \sinh\tau x\\
    \nonumber&+\frac{2\tau\sin\tau}{ic\tau^2+\gamma}\sinh\tau x+\sin\tau(x-1)+\cos\tau x \sinh\tau\\
     &-\sin\tau x \cosh\tau-\frac{2\tau \sinh\tau}{ic\tau^2+\gamma}\sin\tau x,
  \end{align}
and $\tau$ satisfies the characteristic equation
  \begin{equation}\label{tauchar2}
  (ic\tau^2+\gamma)(\cosh\tau\sin\tau-\sinh\tau\cos\tau)+2\tau\sin\tau \sinh\tau=0,
  \end{equation}
which implies that each eigenvalue of $\mathbb{A}$ is geometrically simple.

{\color{blue}Indeed, $f$ satisfies the equation
\begin{equation}
  \begin{cases}
    f^{(4)}(x)-\tau^4f(x)=0,\\
    f(0)=f(1)=0,\\
    f^{''}(1)=0,\ f^{''}(0)=ic\tau^2f^{'}(0)+\gamma f^{'}(0).
  \end{cases}
\end{equation}
The solution of ode $f^{(4)}(x)-\tau^4f(x)=0$ is of the form
\begin{equation}\label{fxc1c2c3c400}
  f(x)=c_1e^{\tau x}+c_2e^{-\tau x}+c_3\cos\tau x+c_4\sin\tau x.
\end{equation}
From the boundary condition $f(0)=f(1)=0,f^{''}(0)=ic\tau^2f^{'}(0)+\gamma f^{'}(0)$ we can derive
\begin{equation}\label{c1c2c3c4}
  \begin{cases}
    c_1=\frac{1}{2}\left[e^{-\tau}-\cos\tau+\frac{2\tau\sin\tau}{ic\tau^2+\gamma}+\sin\tau\right],\\
    c_2=\frac{1}{2}\left[-e^{\tau}+\cos\tau-\frac{2\tau\sin\tau}{ic\tau^2+\gamma}+\sin\tau\right],\\
    c_3=\frac{1}{2}\left[e^\tau-e^{-\tau}-2\sin\tau\right],\\
    c_4=\frac{1}{2}\left[\frac{-2\tau}{ic\tau^2+\gamma}(e^\tau-e^{-\tau})-(e^\tau+e^{-\tau})+2\cos\tau\right].
  \end{cases}
\end{equation}
Substitute (\ref{c1c2c3c4}) into (\ref{fxc1c2c3c400}) to get (\ref{fxequ3}). The equation (\ref{tauchar2}) is derived by the boundary condition $f''(1)=0.$
}

{\bf Step 2.} We prove that the eigenvalues $\{\lambda_n,\overline{\lambda_n}\}$, $\lambda_n=i\tau_n^2$ of $\mathbb{A}$ have
asymptotic expression
\begin{equation}\label{ataunasy1}
    \tau_n=p\pi+O(n^{-1}),\; \lambda_n=-2/c+i(p\pi)^2+O(n^{-1}),
\end{equation}
where $n$ is a large positive integer and $p=n+1/4$;
the corresponding eigenfunction $(f_n,\lambda_nf_n)$ of $\mathbb{A}$ can be chosen by

 \begin{align}\label{fxequasy30}
    \begin{split}
 F_n(x)=\left(
              \begin{array}{c}
                -e^{-p\pi x}+ (\sin p\pi x-\cos p\pi x) \\
                i[-e^{-p\pi x}-(\sin p\pi x-\cos p\pi x)] \\
                0\\
              \end{array}
            \right)+O(n^{-1}), \tau_n^{-2}e^{-\tau_n}f'_n(0)=O(n^{-1}), \\
\end{split}
  \end{align}
 where
  \begin{align}\label{fmatrix1}
    F_n(x)=\frac{2}{\tau_n^2e^{\tau_n}}
    \left(
      \begin{array}{c}
        f_{n}^{''}(x) \\
      \lambda_nf_n(x)\\
      \gamma f_{n}^{'}(0)\\
      \end{array}
    \right),
    \; \lim_{n\rightarrow\infty}\|F_n\|^2_{L^2\times L^2\times \mathds{C}}=2.
  \end{align}

 {\color{blue} In fact,  When ${\rm Im}(\tau)$ is bounded and ${\rm Re}(\tau)\rightarrow\infty$, by (\ref{tauchar2}) we derive the following equality
\begin{align}\label{ruxie}
       \nonumber \cos\tau-\sin\tau=&\frac{(ic\tau^2+\gamma)e^{-\tau}+2\tau \sinh\tau}{(ic\tau^2+\gamma)\sinh\tau}\sin\tau=\frac{2\tau\sin\tau}{ic\tau^2+\gamma}+O(e^{-{\rm Re}\tau}) \\
          =&\frac{2\sin\tau}{ic\tau}+O(|\tau|^{-3}) =O(|\tau|^{-1}),
   \end{align}
which implies $sin(2\tau)=1+O(|\tau|^{-1})$.
Using Rouche's theorem to get $\tau_n=(n+\frac{1}{4})\pi+O(n^{-1})=p\pi+O(n^{-1})$, combining which and (\ref{ruxie}) to get $\tau_n=p\pi-\frac{1}{ic(p\pi)}+O(n^{-2})$. Hence (\ref{ataunasy1}) is proved.

By (\ref{ataunasy1}), for $y\geq0$ and $0\leq x\leq1$, we have
 \begin{equation}\label{ataunxmx}
   \begin{cases}
     e^{-\tau_ny}=e^{-p\pi y}+O(n^{-1}),\\
     \sin\tau_nx=\sin p\pi x+O(n^{-1}),\\
     \cos\tau_nx=\cos p\pi x+O(n^{-1}).
   \end{cases}
 \end{equation}
 Obviously, $f_n(x)=f(x)$ is defined by (\ref{fxequ3}) with $\tau=\tau_n$. By (\ref{ataunxmx}), it follows that
  \begin{align*}
    \frac{1}{\tau^2e^\tau}f^{''}(x)=&\frac{2}{e^\tau}[\sinh\tau(x-1)+\sin\tau \cosh\tau x-\cos\tau \sinh\tau x-\cos\tau x \sinh\tau+\sin\tau x \cosh\tau]\\
    &+\frac{2}{e^\tau}[\frac{2\tau\sin\tau}{ic\tau^2+\gamma}\sinh\tau x-\sin\tau(x-1)+\frac{2\tau sh\tau}{ic\tau^2+\gamma}\sin\tau x]\\
    =&[-e^{-\tau x}+\sin\tau e^{\tau(x-1)}-\cos\tau e^{\tau(x-1)}-\cos\tau x+\sin\tau x+O(e^{-{\rm Re}\tau})]+O(|\tau|^{-1})\\
    =&-e^{-p\pi x}+(\sin p\pi x-\cos p\pi x)+O(n^{-1}),
    \end{align*}
 and
 \begin{equation*}
  \begin{split}
       &\frac{1}{\tau^2e^\tau}(\lambda f(x))\\
       =&\frac{2i}{e^\tau}[\sinh\tau(x-1)+\sin\tau \cosh\tau x-\cos\tau \sinh\tau x+\cos\tau x \sinh\tau-\sin\tau x \cosh\tau] \\
       &+\frac{2i}{e^\tau}[\sin\tau(x-1)+\frac{2\tau\sin\tau}{ic\tau^2+\gamma}sh\tau x-\frac{2\tau \sinh\tau}{ic\tau^2+\gamma}\sin\tau x] \\
      =&i[-e^{-\tau x}+\sin\tau e^{\tau(x-1)}-\cos\tau e^{\tau(x-1)}+\cos\tau x-\sin\tau x+O(e^{-{\rm Re}\tau})]+O(|\tau|^{-1})\\
      =&i[-e^{-p\pi x}-(\sin p\pi x-\cos p\pi x)]+O(n^{-1}).
  \end{split}
\end{equation*}
Accordingly, (\ref{fxequasy30}) and (\ref{fmatrix1}) are obtained.
}

{\bf Step 3:} We show that for any $\lambda\in \sigma(\mathbb{A})$, ${\rm Re}\lambda<0$.
{\color{blue}To do this, we let $(f,g)$ be an eigenfunction corresponding to eigenvalue $\lambda$.
Then $g=\lambda f$ and
\begin{align}\label{1}
    f^{(4)}(x)+\lambda^2 f(x)=0,\ x\in[0,1].
\end{align}
Multiply (\ref{1}) by $\overline{f}$ and integral on both sides over $[0,1]$ to get
\begin{align}\label{10}
   \int_0^1f^{(4)}(x)\overline{f}(x)dx+({\rm Re}^2\lambda-{\rm Im}^2\lambda) \int_0^1 f(x)\overline{f}(x)dx+2i{\rm Re}\lambda {\rm Im}\lambda \int_0^1 f(x)\overline{f}(x)dx=0
\end{align}
with the boundary condition $f(0)=f(1)=f''(1)=0,\ f''(0)=[c({\rm Re}\lambda+i{\rm Im}\lambda)+\gamma]f'(0).$
Integral (\ref{10}) in part to derive
\begin{align}\label{l1}
   \nonumber &\int_0^1|f''(x)|^2dx+({\rm Re}^2\lambda-{\rm Im}^2\lambda) \int_0^1 |f(x)|^2dx+(c {\rm Re}\lambda\\
   &+\gamma) |f'(0)|^2
+i {\rm Im}\lambda\bigg[c|f'(0)|^2+2{\rm Re}\lambda \int_0^1 |f(x)|^2dx\bigg]=0.
\end{align}
Hence the imaginary part of (\ref{l1}) equals to zero, that is, ${\rm Im}\lambda=0$ or $c|f'(0)|^2+2{\rm Re}\lambda \int_0^1 |f(x)|^2dx=0$. We first show that
in both case ${\rm Re}\lambda\leq 0$.\\
Case 1: ${\rm Im}\lambda=0$. By the fact that the real part equals to zero, we derive $\int_0^1|f''(x)|^2dx+{\rm Re}^2\lambda \int_0^1 |f(x)|^2dx+(c{\rm Re}\lambda+\gamma)|f'(0)|^2=0$ which implies $(c {\rm Re}\lambda+\gamma)|f'(0)|^2\leq 0$. It is not hard to show that $f'(0)\neq 0$.  Indeed, if $f'(0)=0$, we can easily obtain  $f''(x)=0$; observe that $f(1)=0$. So $f=0$, which indicates that $(f,g)=0$ is
the eigenfunction. Then ${\rm Re}\lambda\leq 0$. \\
Case 2: $c|f'(0)|^2+2{\rm Re}\lambda \int_0^1 |f(x)|^2dx=0.$ The fact $f\neq 0$ indicates that ${\rm Re}\lambda\leq 0$.
}

{\color{blue}Next we shall prove that ${\rm Re}\lambda=0$ is impossible. In fact, ${\rm Re}\lambda=0$ implies
 \begin{align}\label{l1l}
   \int_0^1|f''(x)|^2dx-{\rm Im}^2\lambda \int_0^1 |f(x)|^2dx+\gamma|f'(0)|^2+ic {\rm Im}\lambda |f'(0)|^2=0.
\end{align}
Consider the imaginary part, ${\rm Im}\lambda=0$ or $f'(0)=0$.  If ${\rm Im}\lambda=0$, the real part implies
 $\int_0^1|f''(x)|^2dx+\gamma|f'(0)|^2=0$, combination with the condition $f(0)=f(1)=0$ to get $f(x)=0, x\in [0,1]$, thereby
$g(x)=0, \ x\in [0,1]$. If $f'(0)=0$, equation (\ref{1}) becomes
\begin{align*}
    \left\{
      \begin{array}{ll}
        f^{(4)}(x)-{\rm Im}^2\lambda f(x)=0, & \hbox{ } \\
        f(0)=f(1)=0, & \hbox{ } \\
        f'(0)=f''(0)=f''(1)=0, & \hbox{ }
      \end{array}
    \right.
\end{align*}
whose solution is $f(x)=0$. This indicates that $(f,\lambda f)$ is not an eigenfunction of $\lambda$ provided ${\rm Re}\lambda=0$.
}

{\bf Step 4:} We prove that the system operator
$J_1(f,g)$
$=(g,-f^{(4)}),
  D(J_1)=\{(f,g)\in(H^4(0,1)\cap H_K^2(0,1))\times$
  $ H_K^2(0,1)|f''(1)=g'(0)=0\}$
of reference system ($1/c=\gamma=0$)
\begin{equation}\label{fangonge}
  \begin{cases}
  \hat{v}_{tt}(x,t)+\hat{v}_{xxxx}(x,t)=0,\ 0<x<1,\\
  \hat{v}(0,t)=\hat{v}_{xt}(0,t)=0,\\
  \hat{v}_{xx}(1,t)=\hat{v}(1,t)=0
\end{cases}
\end{equation}
is skew-adjoint and have compact resolvent; all the eigenvalues  $\{\mu_n,\overline{\mu_n}\}$, $\mu_n=i\omega_n^2$ are algebraically simple and have asymptotic expression
\begin{equation}
  \omega_n=p\pi+O(e^{-n}), p=n+1/4;
\end{equation}
the eigenfunction have asymptotic expression
\begin{align}\label{aphixasy0}
         G_n(x)=\left(
                 \begin{array}{c}
                   -e^{-p\pi x}+ (\sin p\pi x-\cos p\pi x)\\
                   i[-e^{-p\pi x}-(\sin p\pi x-\cos p\pi x)] \\
0
                 \end{array}
               \right) +O(n^{-1}),
  \end{align}
 where
  \begin{equation}
    G_n(x)=\frac{1}{\omega_n^2e^{\omega_n}}\begin{pmatrix}
      \phi_{n}''(x) \\
      \mu_n\phi_n(x)\\
      \gamma\phi'_n(0)
    \end{pmatrix}^T,
  \end{equation}
$(\phi_n,\mu_n\phi_n)$ is the eigenfunction of $J_1$ corresponding to the eigenvalue $\mu_n$.

{\color{blue}Indeed, it is easily seen by definition that $J_1$ is skew-adjoint. Therefore $J_1$ generates an unitary group and the spectrum of $J_1$ is on the imaginary. Let $(f,g)\in \mathbb{H}$, solve the equation $J_1(\phi,\psi)=(f,g)$ to get
$  \psi(x)=f(x), x\in (0,1)$ and
\begin{align*}
    \phi(x)=\frac{x^3}{6}\int_0^1(1-\sigma)g(\sigma)d\sigma-\frac{x}{6}\int_0^1(\sigma^3-3\sigma^2+2\sigma)g(\sigma)d\sigma-\int_0^{x}\frac{(x-\sigma)^3}{6}g(\sigma)d\sigma.
\end{align*}
Hence $0\in\rho(J_1)$. Since by Sobolev imbedding theorem $H^4(0,1)\times H^2(0,1)$ is compactly imbedded in $H^2(0,1)\times L^2(0,1)$, $J_1^{-1}$ is impact. Therefore the eigenfunction of $J_1$ forms a orthogonal basis. Put $\frac{1}{c}=\gamma=0$ in
(\ref{fxequ3}) to derive the eigenfunction $(\phi,\mu\phi)$ corresponding to
eigenvalue $\mu=i\omega^2$ as follows
\begin{align*}
  \phi(x)=2[\sinh\tau(x-1)+\sin\tau \cosh\tau x-\cos\tau \sinh\tau x+\sin\tau(x-1)+\cos\tau x\sinh\tau-\sin\tau x\cosh\tau].
\end{align*}
In (\ref{tauchar2}), let $\frac{1}{c}=\gamma=0$ and $\tau=\omega$ to get
\begin{equation}\label{aomegachar}
  \cosh\omega\sin\omega-\sinh\omega\cos\omega=0.
\end{equation}
Similar to Step 2, we use Routh\'{e}'s theorem to derive
\begin{equation*}
  \omega_n=p\pi+O(e^{-n}), p=n+\frac{1}{4}.
\end{equation*}
Moreover, by the same procedure as in Step 2, it is easily obtained (\ref{aphixasy0}).
}

{\bf Step 5:} We show that there exist a sequence of generalized eigenfunctions of $\mathbb{A}$ which forms Riesz basis for $\mathbb{H}$;
$\mathbb{A}$ is a generator of exponentially stable $C_0$-semigroup.
To this end, {\color{blue}similar to Guo, Wang and Yang \cite{Guo2008}, we define an isometric isomorphism $\mathbb{T}:\mathbb{H}\rightarrow L^2(0,1)\times L^2(0,1)\times \mathds{R}$
by $\mathbb{T}(f,g)=(f'',g,\gamma f'(0)),(f,g)\in \mathbb{H}$.}

In fact, by (\ref{fxequasy30}) and (\ref{aphixasy0}), there exists $N>0$ such that
\begin{equation}
\begin{split}
  &\sum_{n>N}^{\infty}\bigg\|\frac{1}{\tau_n^2e^{\tau_n}}(f_n,\lambda_nf_n)-\frac{1}{\omega_n^2e^{\omega_n}}(\phi_n,\mu_n\phi_n)\bigg\|^2_{\mathbb{H}}\\
  =&{\color{blue}\sum_{n>N}^{\infty}\bigg\|\frac{1}{\tau_n^2e^{\tau_n}}\mathbb{T}(f_n,\lambda_nf_n)-\frac{1}{\omega_n^2e^{\omega_n}}\mathbb{T}(\phi_n,\mu_n\phi_n)\bigg\|^2_{L^2(0,1)\times L^2(0,1)\times \mathds{R}}}\\
  =&\sum_{n>N}^{\infty}\|F_n-G_n\|^2_{L^2(0,1)\times L^2(0,1)\times \mathds{R}}=\sum_{n>N}^{\infty}O(n^{-2})<\infty.
\end{split}
\end{equation}
The same thing is true for conjugates. Since $\mathbb{A}$ is a densely defined and discrete operator, $\mathbb{A}$ satisfies all the conditions of Theorem 1 of \cite{Guo2001}. Hence  the generalized eigenfunctions of $\mathbb{A}$ forms Riesz basis for $\mathbb{H}$.
This indicates that $\mathbb{A}$ is a generator of $C_0$-semigroup. The exponential stability is obtained by
step 3 and the asymptotic expression (\ref{ataunasy1}). The proof is therefore completed.
\end{proof}

Similar to \cite[Lemma 2.3]{Zhou2018a}, we obtain the following lemma.
\begin{lemma}\label{zestimate}
Assume that $(\widehat{z}(\cdot,0),\widehat{z}_t(\cdot,0))\in D(\mathbb{A})$. The solution of (\ref{erroerrorobserver}) satisfies $|\widehat{z}_{xxx}(1,t)|\leq Me^{-\mu t}$ for some constant $M,\; \mu>0$.
\end{lemma}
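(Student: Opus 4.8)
The plan is to deduce the pointwise exponential decay of the trace $\widehat{z}_{xxx}(1,t)$ from two ingredients: the exponential stability of the $C_0$-semigroup $e^{\mathbb{A}t}$ generated by $\mathbb{A}$ (Lemma \ref{Riesz}), and the boundedness of the linear functional $(f,g)\mapsto f'''(1)$ on $D(\mathbb{A})$ equipped with the graph norm. Since $(\widehat{z}(\cdot,0),\widehat{z}_t(\cdot,0))\in D(\mathbb{A})$, the solution of (\ref{erroerrorobserver}) is the classical solution $(\widehat{z}(\cdot,t),\widehat{z}_t(\cdot,t))=e^{\mathbb{A}t}(\widehat{z}(\cdot,0),\widehat{z}_t(\cdot,0))$, which stays in $D(\mathbb{A})$ for all $t\ge 0$; in particular $\widehat{z}(\cdot,t)\in H^4(0,1)$, so the trace $\widehat{z}_{xxx}(1,t)$ is well defined for every $t$.

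First I would upgrade the exponential stability on $\mathbb{H}$ to the graph norm. Let $M_0,\mu>0$ be such that $\|e^{\mathbb{A}t}X\|_{\mathbb{H}}\le M_0 e^{-\mu t}\|X\|_{\mathbb{H}}$ for all $X\in\mathbb{H}$. For $X\in D(\mathbb{A})$ one has $\mathbb{A}e^{\mathbb{A}t}X=e^{\mathbb{A}t}\mathbb{A}X$, hence
\[
\|e^{\mathbb{A}t}X\|_{\mathbb{H}}+\|\mathbb{A}e^{\mathbb{A}t}X\|_{\mathbb{H}}
\le M_0 e^{-\mu t}\big(\|X\|_{\mathbb{H}}+\|\mathbb{A}X\|_{\mathbb{H}}\big),
\]
i.e.\ the graph norm decays exponentially with the same rate $\mu$. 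Applying this with $X=(\widehat{z}(\cdot,0),\widehat{z}_t(\cdot,0))$ yields $\|(\widehat{z}(\cdot,t),\widehat{z}_t(\cdot,t))\|_{\mathbb{H}}+\|\mathbb{A}(\widehat{z}(\cdot,t),\widehat{z}_t(\cdot,t))\|_{\mathbb{H}}\le M_0 C_0 e^{-\mu t}$, where $C_0$ depends only on the initial data.

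Next I would establish the trace estimate: there is $C>0$ with $|f'''(1)|\le C\big(\|(f,g)\|_{\mathbb{H}}+\|\mathbb{A}(f,g)\|_{\mathbb{H}}\big)$ for all $(f,g)\in D(\mathbb{A})$. Here $f\in H^4(0,1)$ with $f(0)=f(1)=0$, and $\mathbb{A}(f,g)=(g,-f^{(4)})$, so that $\|f''\|_{L^2}\le\|(f,g)\|_{\mathbb{H}}$ and $\|f^{(4)}\|_{L^2}\le\|\mathbb{A}(f,g)\|_{\mathbb{H}}$. Since $f(0)=f(1)=0$, Poincar\'e's inequality bounds $\|f\|_{L^2}$ and $\|f'\|_{L^2}$ by $\|f''\|_{L^2}$, and the one-dimensional interpolation inequality applied to $f''$ gives $\|f'''\|_{L^2}\le\varepsilon\|f^{(4)}\|_{L^2}+C_\varepsilon\|f''\|_{L^2}$. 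Therefore $\|f\|_{H^4(0,1)}\le C\big(\|(f,g)\|_{\mathbb{H}}+\|\mathbb{A}(f,g)\|_{\mathbb{H}}\big)$, and since $f'''\in H^1(0,1)\hookrightarrow C[0,1]$ we get $|f'''(1)|\le\|f'''\|_{C[0,1]}\le C\|f'''\|_{H^1}\le C\|f\|_{H^4(0,1)}$, which is the claim.

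Combining the two steps with $(f,g)=(\widehat{z}(\cdot,t),\widehat{z}_t(\cdot,t))$ gives $|\widehat{z}_{xxx}(1,t)|\le C\big(\|(\widehat{z}(\cdot,t),\widehat{z}_t(\cdot,t))\|_{\mathbb{H}}+\|\mathbb{A}(\widehat{z}(\cdot,t),\widehat{z}_t(\cdot,t))\|_{\mathbb{H}}\big)\le C M_0 C_0 e^{-\mu t}$, which is the asserted bound with $M:=CM_0C_0$. The only genuine (and still mild) obstacle is the trace estimate: the norm of $\mathbb{H}$ records only $\|f''\|_{L^2}$ rather than the full $H^2$-norm, and the graph norm does not explicitly contain $\|f'''\|_{L^2}$, so one must invoke Poincar\'e's inequality (legitimate thanks to $f(0)=f(1)=0$) together with a one-dimensional interpolation inequality; the remainder is routine $C_0$-semigroup theory, and the whole argument parallels \cite[Lemma 2.3]{Zhou2018a}.
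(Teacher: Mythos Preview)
Your proposal is correct and takes essentially the same approach as the paper, which does not spell out a proof but simply refers to \cite[Lemma~2.3]{Zhou2018a}; your argument---upgrading exponential decay to the graph norm via $\mathbb{A}e^{\mathbb{A}t}=e^{\mathbb{A}t}\mathbb{A}$ and then bounding the trace $f'''(1)$ by the graph norm through Sobolev embedding and interpolation---is exactly that standard route.
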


We put systems (\ref{transferl}) and (\ref{errorobserver}) together to derive the following
infinite-dimensional disturbance estimator
\begin{equation} \label{transfer}
\left\{\begin{array}{l}
l_{tt}(x,t)+l_{xxxx}(x,t)=0,\;\; x\in (0,1), \; t>0, \\
l(0,t)=l_{xx}(1,t)=0, \; \;  t\ge 0,\\
l_{xx}(0,t)=cl_{xt}(0,t)+\gamma l_x(0,t)+w_{xx}(0,t) \; \;  t\ge 0,\\
-l_{xxx}(1,t)+ml_{tt}(1,t)=u(t), \;\;  t\ge 0, \\
z_{tt}(x,t)+z_{xxxx}(x,t)=0,\;\; x\in (0,1), \; t>0, \\
z(0,t)=z_{xx}(1,t)=0,z(1,t)=l(1,t)-w(1,t) \; \;  t\ge 0,\\
z_{xx}(0,t)=c z_{xt}(0,t)+\gamma z_x(0,t), \;\;  t\ge 0.
\end{array}\right.
\end{equation}
By Lemma \ref{zestimate}, for $(\widehat{z}(\cdot,0),\widehat{z}_t(\cdot,0))\in D(\mathbb{A})$,
$F(t)=\widehat{l}_{xxx}(1,t)-m\widehat{l}_{tt}(1,t)=z_{xxx}(1,t)-mz_{tt}(1,t)-\widehat{z}_{xxx}(1,t)\approx z_{xxx}(1,t)-mz_{tt}(1,t)$, which means that the observer system (\ref{erroerrorobserver}) is a total disturbance estimator.
Moreover, since (\ref{erroerrorobserver}) decays exponentially, we obtain  $w(\cdot,t)=l(\cdot,t)-z(\cdot,t)+\widehat{z}(\cdot,t)\approx l(\cdot,t)-z(\cdot,t)$,
this implies that $l(\cdot,t)-z(\cdot,t)$ is the estimate of $w(\cdot,t)$.

Since the state feedback $u(t)=-\alpha w_t(1,t)+\beta w_{xxxt}(1,t)$ exponentially stabilizes system (\ref{beem}) without disturbance, $z_{xxx}(1,t)-mz_{tt}(1,t)-\widehat{z}_{xxx}(1,t)$ is the estimate of total disturbance $F(t)$, and $w(\cdot,t)$ is estimated by $l(\cdot,t)-z(\cdot,t)$, it is natural to design the following controller
\begin{align}\label{feedback11}
    \nonumber&u(t)=-z_{xxx}(1,t)+mz_{tt}(1,t)-\alpha[l_t(1,t)-z_t(1,t)]\\
    &+\beta [l_{xxxt}(1,t)-z_{xxxt}(1,t)].
\end{align}
With the controller (\ref{feedback11}), we derive the closed-loop system
\begin{equation} \label{perror110}
\left\{\begin{array}{l}
w_{tt}(x,t)+w_{xxxx}(x,t)=0, \\
w(0,t)=w_x(0,t)=w_{xx}(1,t)=0,\\
-w_{xxx}(1,t)+mw_{tt}(1,t)=-z_{xxx}(1,t)+mz_{tt}(1,t)\\
-\alpha(l_t(1,t)-z_t(1,t))
+\beta [l_{xxxt}(1,t)\\
-z_{xxxt}(1,t)]+F(t),\\
l_{tt}(x,t)+l_{xxxx}(x,t)=0,\;\; x\in (0,1), \\
l(0,t)=l_{xx}(1,t)=0, \\
l_{xx}(0,t)=cl_{xt}(0,t)+\gamma l_x(0,t)+w_{xx}(0,t),\\
-l_{xxx}(1,t)+ml_{tt}(1,t)=-z_{xxx}(1,t)+mz_{tt}(1,t)\\
-\alpha(l_t(1,t)-z_t(1,t))
+\beta [l_{xxxt}(1,t)-z_{xxxt}(1,t)], \\
z_{tt}(x,t)+z_{xxxx}(x,t)=0,\\
z(0,t)=z_{xx}(1,t)=0, z(1,t)=l(1,t)-w(1,t),\\
z_{xx}(0,t)=c z_{xt}(0,t)+\gamma z_x(0,t).
\end{array}\right.
\end{equation}
\begin{remark}\label{xiangdui}
In \cite{Zhou2018a,Zhou2018b}, disturbance estimator designs were only used to estimate
the total disturbance; in order to derive estimated states, the authors designed additional Luenberger state observers of the original system by compensating the total disturbance. However, we observe that the state of the original system can also be estimated by the disturbance estimator. Based on this, in control law (\ref{feedback11}), we directly used the estimated state stemmed from the disturbance estimator. This makes our control strategy more concise and energy-saving, and our method provides an idea to simplify the references \cite{Zhou2018a,Zhou2018b}.
\end{remark}

\section{Stability of the Close-loop system}

In this section, our objective is to verify that the state $(w(\cdot,t),w_t(\cdot,t))$ of the closed-loop system (\ref{perror110}) is exponentially stable while
guaranteeing the boundedness of the other variables.
To this end, we first write $w$-part of (\ref{perror110}) and (\ref{erroerrorobserver}) together to get
\begin{equation} \label{wzwanclosed}
\left\{\begin{array}{l}
w_{tt}(x,t)+w_{xxxx}(x,t)=0, \\
w(0,t)=w_x(0,t)=w_{xx}(1,t)=0, \\
-w_{xxx}(1,t)+mw_{tt}(1,t)=-\alpha w_t(1,t)\\
+\beta w_{xxxt}(1,t)-\widehat{z}_{xxx}(1,t)-\beta\widehat{z}_{xxxt}(1,t),\\
\widehat{z}_{tt}(x,t)+\widehat{z}_{xxxx}(x,t)=0,\\
\widehat{z}(0,t)=\widehat{z}(1,t)=\widehat{z}_{xx}(1,t)=0, \\
\widehat{z}_{xx}(0,t)=c \widehat{z}_{xt}(0,t)+\gamma \widehat{z}_x(0,t).
\end{array}\right.
\end{equation}

It is natural to choose the state Hilbert space of system (\ref{wzwanclosed}) as $\mathcal{H}_1=\mathbf{H}\times\mathbb{H}$.
Define the operator $\mathcal{A}_1:D(\mathcal{A}_1)\subset\mathcal{H}_1\rightarrow \mathcal{H}_1$ by
$\mathcal{A}_1(f,g,\eta,p,q)=(g,-f^{(4)},-\eta\beta^{-1}-\beta^{-1}(\alpha-m\beta^{-1})g(1),q,-p^{(4)}),\; (f,g,p,q)\in D(\mathcal{A}_1)=\{(f,g,\eta,p,q)\in (H^2_E(0,1)\bigcap H^4(0,1))\times H^2_E(0,1)\times D(\mathbb{A})|f''(1)
=0, \eta=-f'''(1)+m\beta^{-1}g(1)+p'''(1)\}$.
System (\ref{wzwanclosed}) is abstractly described by
$\frac{d}{dt}X(t)=\mathcal{A}_1X(t),$
where $X(t)=(w(\cdot,t),w_t(\cdot,t),-w_{xxx}(1,t)+m\beta^{-1}w_t(1,t)+\widehat{z}_{xxx}(1,t),\widehat{z}(\cdot,t),\widehat{z}_t(\cdot,t)).$
It is easily seen that the operator $\mathcal{A}_1$ is not dissipative in the current inner product;
it seems difficult to find an equivalent inner product in $\mathcal{X}_1$ to make $\mathcal{A}_1$ dissipative; the
multiplier method may be not effective to verify the stability of the semigroup $e^{\mathcal{A}_1t}$.
Instead, we shall use Riesz basis approach to prove the stability, the key step is to find out the complicated but important
relations (\ref{guanxi3}) and (\ref{guanxi4}) between sequences of generalized eigenfunctions.

\begin{theorem}\label{exponential00}
System (\ref{wzwanclosed}) is governed by an exponentially stable $C_0$-semigroup.
\end{theorem}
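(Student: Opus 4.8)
The plan is to establish that $\mathcal{A}_1$ generates an exponentially stable $C_0$-semigroup via the Riesz basis approach, exactly as in the proof of Lemma~\ref{Riesz}, but now applied to the coupled operator on $\mathcal{H}_1 = \mathbf{H}\times\mathbb{H}$. The starting observation is that system (\ref{wzwanclosed}) is triangular: the $\widehat{z}$-part (\ref{erroerrorobserver}) is autonomous and, by Lemma~\ref{Riesz}, is governed by an exponentially stable semigroup whose generator $\mathbb{A}$ admits a sequence of generalized eigenfunctions forming a Riesz basis for $\mathbb{H}$; meanwhile the $w$-part, if the coupling term $-\widehat{z}_{xxx}(1,t)-\beta\widehat{z}_{xxxt}(1,t)$ is dropped, is precisely the Conrad–M\"{o}rg\"{u}l closed-loop beam with feedback $u=-\alpha w_t(1,t)+\beta w_{xxxt}(1,t)$, which is exponentially stable and (by \cite{Conrad1998,Guo2001b}) has a generalized-eigenfunction Riesz basis for $\mathbf{H}$. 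So spectrally $\mathcal{A}_1$ should behave like the direct sum of two exponentially stable operators.

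First I would verify the routine structural facts: $\mathcal{A}_1$ is densely defined, closed, has compact resolvent (each component does, and the coupling is through point evaluations that are relatively compact), hence has only discrete spectrum. Next I would compute the eigenvalues and eigenfunctions of $\mathcal{A}_1$. Because of the triangular structure, $\sigma(\mathcal{A}_1) = \sigma(\mathbb{A}) \cup \sigma(\mathcal{A}_0)$, where $\mathcal{A}_0$ is the decoupled $w$-operator. For $\lambda\in\sigma(\mathbb{A})$, an eigenfunction of $\mathcal{A}_1$ is obtained by taking the $\mathbb{A}$-eigenfunction in the $(\widehat{z},\widehat{z}_t)$ slot and solving a forced (nonhomogeneous) beam boundary value problem in the $(w,w_t)$ slot with the exponentially-decaying forcing $\widehat{z}_{xxx}(1,t)+\beta\widehat{z}_{xxxt}(1,t)$; for $\lambda\in\sigma(\mathcal{A}_0)$ one takes $\widehat{z}\equiv 0$ and the $\mathcal{A}_0$-eigenfunction. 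I would then derive the asymptotic expressions for both families of eigenvalues and normalized eigenfunctions (leading-order terms plus $O(n^{-1})$ remainders), mirroring Steps~1–4 of Lemma~\ref{Riesz}, and compare them against the reference basis — here the natural reference is the orthonormal basis coming from the direct sum of the two skew-adjoint reference systems. The crucial quantitative input is the pair of relations labelled (\ref{guanxi3}) and (\ref{guanxi4}) in the text, which express each generalized eigenfunction of $\mathcal{A}_1$ in terms of the reference eigenfunctions up to an $\ell^2$-summable error; once these are in hand, Theorem~1 of \cite{Guo2001} (the Bari-type perturbation criterion already invoked in Lemma~\ref{Riesz}) yields that the generalized eigenfunctions of $\mathcal{A}_1$ form a Riesz basis for $\mathcal{H}_1$, hence $\mathcal{A}_1$ generates a $C_0$-semigroup with spectrum-determined growth.

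Finally, exponential stability follows by showing $\sup\{\operatorname{Re}\lambda : \lambda\in\sigma(\mathcal{A}_1)\} < 0$: for the $\mathbb{A}$-branch this is Lemma~\ref{Riesz} (Step~3 plus the asymptotics $\operatorname{Re}\lambda_n \to -2/c$), and for the $\mathcal{A}_0$-branch this is the Conrad–M\"{o}rg\"{u}l/Guo result; since both branches have real parts bounded strictly away from the imaginary axis and the asymptotic real parts are negative, the supremum is negative, and spectrum-determined growth converts this into $\|e^{\mathcal{A}_1 t}\| \le M e^{-\omega t}$.

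I expect the main obstacle to be establishing the asymptotic matching relations (\ref{guanxi3}) and (\ref{guanxi4}) with genuine $\ell^2$ control of the remainders. The coupled eigenfunctions in the $\lambda\in\sigma(\mathbb{A})$ branch are solutions of a nonhomogeneous fourth-order boundary value problem whose inhomogeneous data are themselves only known asymptotically, so one must track how the $O(n^{-1})$ errors in the $\widehat{z}$-eigenfunctions propagate through the resolvent of the $w$-operator — and in particular confirm that the resolvent does not amplify them (no near-resonance between $\sigma(\mathbb{A})$ and $\sigma(\mathcal{A}_0)$, which is where the asymptotic separation $\operatorname{Re}\lambda \sim -2/c$ versus $\operatorname{Re}\lambda \sim$ the $\mathcal{A}_0$-rate must be used). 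A secondary subtlety is that $\mathcal{A}_1$ is non-dissipative in the natural inner product, so one cannot shortcut any of this with a Lyapunov or multiplier argument; the whole burden rests on the Riesz basis computation, and care is needed to ensure the generalized (as opposed to genuine) eigenfunctions are handled correctly at any eigenvalue where the two branches might asymptotically collide or where algebraic multiplicity exceeds one.
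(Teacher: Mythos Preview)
Your proposal is essentially the paper's argument: triangular structure, $\sigma(\mathcal{A}_1)=\sigma(\mathbf{A})\cup\sigma(\mathbb{A})$, explicit eigenfunction asymptotics for both branches, relations (\ref{guanxi3})--(\ref{guanxi4}), Bari-type Riesz basis criterion, then spectrum-determined growth.

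One point deserves correction. You describe the reference basis as ``the orthonormal basis coming from the direct sum of the two skew-adjoint reference systems,'' but this will not work directly: for $\lambda_n\in\sigma(\mathbb{A})$ the forced $w$-component $f_{1n}$ of the $\mathcal{A}_1$-eigenfunction is \emph{not} $O(n^{-1})$---its leading term coincides with that of $\phi_n$ (both are $-e^{-p\pi x}+\sin p\pi x-\cos p\pi x$ in the second-derivative slot). Thus the coupled eigenfunction is not $\ell^2$-close to $(0,0,0,\phi_n,\lambda_n\phi_n)$. The paper handles this by taking as reference the already-known Riesz basis of $\mathbf{H}\times\mathbb{H}$ (the direct sum of the $\mathbf{A}$- and $\mathbb{A}$-bases) and introducing a bounded invertible block-triangular operator $Q$ that copies the first two $\phi_n$-coordinates into the $f$-slots; relations (\ref{guanxi3})--(\ref{guanxi4}) assert that the $\mathcal{A}_1$-eigenfunctions are $\ell^2$-close to $Q$ applied to this direct-sum basis, and Bari's theorem (rather than Guo's Theorem~1) then gives the Riesz basis. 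The near-resonance concern you raise is implicitly resolved by the explicit computation of $f_{1n}$ via (\ref{F21}): the denominator $U_n$ is asymptotically $\sim \tau_n^3 e^{\tau_n}$ and does not degenerate.
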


\begin{proof}\ \
It is routine to show that $\mathcal{A}_1^{-1}$ exists and is compact on $\mathcal{X}_1$, that is,
 $\mathcal{A}_1$ is a discrete operator and the spectrum consists of eigenvalues.
By the same procedure as the proof of Theorem \ref{exponentialnodisturbance}, we can obtain $\sigma(\mathcal{A}_1)=\sigma(\mathbf{A})\bigcup \sigma(\mathbb{A})$.

Next, we shall show that the generalized eigenfunction of $\mathcal{A}_1$ forms a Riesz basis for $\mathcal{X}_1$.
Let $\{\lambda_{n},\overline{\lambda}_{n}\}_{n=1}^\infty$ and $\{\lambda_{1n},\overline{\lambda}_{1n}\}_{n=1}^\infty$ be respectively the eigenvalues of $\mathbb{A}$ and $\mathbf{A}$, $\lambda_{n}=i\tau_{n}^2,\lambda_{1n}=i\tau_{1n}^2$.
Let $\{(2\tau_{n}^{-2}e^{-\tau_{n}}\phi_{n},2ie^{-\tau_{n}}\phi_{n})\}_{n=1}^\infty$ and $\{(2\tau_{1n}^{-2}e^{-\tau_{1n}}f_{n},2ie^{-\tau_{1n}}f_{n},-\frac{2i\beta^{-1}}{\lambda_{1n}+\beta^{-1}}e^{-\tau_{1n}}(\alpha-m\beta^{-1})$
$f_{n}(1))\}_{n=1}^\infty$ be the generalized eigenfunctions corresponding to $\{\lambda_{n}\}_{n=1}^\infty$ and $\{\lambda_{1n}\}_{n=1}^\infty$ such that the sequences
 $\{(2\tau_{n}^{-2}e^{-\tau_{n}}\phi_{n},2ie^{-\tau_{n}}\phi_{n})\}_{n=1}^\infty\bigcup \{(\overline{2\tau_{n}^{-2}e^{-\tau_{n}}\phi_{n}},\overline{2ie^{-\tau_{n}}\phi_{n})})$
$\}_{n=1}^\infty $ and
$\{(2\tau_{1n}^{-2}e^{-\tau_{1n}}f_{n},$
$2ie^{-\tau_{1n}}f_{n},-\frac{2i\beta^{-1}}{\lambda_{1n}+\beta^{-1}}e^{-\tau_{1n}}(\alpha-m\beta^{-1})f_{n}(1))\}_{n=1}^\infty\bigcup
\{(\overline{2\tau_{1n}^{-2}e^{-\tau_{1n}}f_{n}},\overline{2ie^{-\tau_{1n}}f_{n}},-\overline{\frac{2i\beta^{-1}}{\lambda_{1n}+\beta^{-1}}}$
$\overline{e^{-\tau_{1n}}(\alpha}$\\
$\overline{-m\beta^{-1})f_{n}(1)})\}_{n=1}^\infty$
form Riesz basis for $\mathbb{H}$ and $ \mathbf{H}$, respectively. Accordingly, $\{(0,0,0,2\tau_{n}^{-2}e^{-\tau_{n}}\phi_{n},$
$2ie^{-\tau_{n}}\phi_{n})\}_{n=1}^\infty\bigcup \{(0,0,0,\overline{2\tau_{n}^{-2}e^{-\tau_{n}}\phi_{n}},\overline{2ie^{-\tau_{n}}\phi_{n})})\}_{n=1}^\infty$
$ \bigcup\{(2\tau_{1n}^{-2}e^{-\tau_{1n}}f_{n},2ie^{-\tau_{1n}}f_{n},-\frac{2i\beta^{-1}}{\lambda_{1n}+\beta^{-1}} e^{-\tau_{1n}}$
$(\alpha-m\beta^{-1})f_{n}(1),0,0)\}_{n=1}^\infty$
$\bigcup
\{(\overline{2\tau_{1n}^{-2}e^{-\tau_{1n}}f_{n}},$
$\overline{2ie^{-\tau_{1n}}f_{n}},-\overline{\frac{2i\beta^{-1}}{\lambda_{1n}+\beta^{-1}}e^{-\tau_{1n}}}$
$\overline{(\alpha-m\beta^{-1})f_{n}(1)},0,0)\}_{n=1}^\infty$ forms a Riesz basis for $\mathbf{H}\times \mathbb{H}$, which is equivalent to that $\{(0,0,0,2\tau_{n}^{-2}e^{-\tau_{n}}\phi''_{n},2ie^{-\tau_{n}}\phi_{n},$
$2\tau_{n}^{-2}e^{-\tau_{n}}\gamma\phi'_{n}(0))\}_{n=1}^\infty\bigcup \{(0,0,0,\overline{2\tau_{n}^{-2}\phi''_{n}}$
$\overline{e^{-\tau_{n}}},\overline{2ie^{-\tau_{n}}\phi_{n}},\overline{2\tau_{n}^{-2}e^{-\tau_{n}}\gamma\phi'_{n}(0)})\}_{n=1}^\infty \bigcup\{(2\tau_{1n}^{-2}e^{-\tau_{1n}}f''_{n},2i$
$e^{-\tau_{1n}}f_{n},-\frac{2i\beta^{-1}}{\lambda_{1n}+\beta^{-1}} e^{-\tau_{1n}}(\alpha-m\beta^{-1})f_{n}(1),0,0,0)\}_{n=1}^\infty$
$\bigcup\{(\overline{2\tau_{1n}^{-2}e^{-\tau_{1n}}f''_{n}},\overline{2ie^{-\tau_{1n}}f_{n}},-\overline{\frac{2i\beta^{-1}}{\lambda_{1n}
+\beta^{-1}}e^{-\tau_{1n}}}$
$\overline{(\alpha-m\beta^{-1})f_{n}(1)},0,0,0)\}_{n=1}^\infty$  forms a Riesz basis for $\big((L^2(0,1))^2\times \mathds{C}\big)^2$.

Let $\lambda=i\tau^2\in \sigma(\mathcal{A})$ and $(2\tau^{-2}e^{-\tau}f,2ie^{-\tau}f,$
$-\frac{2i\beta^{-1}}{\lambda+\beta^{-1}}e^{-\tau}(\alpha-m\beta^{-1})f(1),2\tau^{-2}e^{-\tau}\phi,
2ie^{-\tau}\phi)$ be the corresponding eigenfunction. If $\phi=0$, then $(2\tau^{-2}e^{-\tau}f,2ie^{-\tau}f,-\frac{2i\beta^{-1}}{\lambda+\beta^{-1}}e^{-\tau}(\alpha-m\beta^{-1})f(1))$
$\neq 0$ and $\lambda\in \sigma(\mathbf{A})$.
Hence in this case the eigenvalues $\{\lambda_{1n}\}_{n=1}^\infty$ corresponds the eigenfunctions $\{(2\tau_{1n}^{-2}e^{-\tau_{1n}}f_n,2ie^{-\tau_{1n}}f_n,-\frac{2i\beta^{-1}}{\lambda_{1n}+\beta^{-1}}e^{-\tau_{1n}}(\alpha-m\beta^{-1})$
$f_n(1),0,0)\}_{n=1}^\infty$.

If $\phi\neq 0$, then $\lambda\in \sigma(\mathbb{A})$. The eigenvalues $\{\lambda_{n}\}_{n=1}^\infty$ corresponds
the eigenfunction $(2\tau_{n}^{-2}e^{-\tau_{n}}\phi_{n},$
$2ie^{-\tau_n}\phi_{n})\}_{n=1}^\infty$ of $\mathbb{H}$, where
\begin{align}
 &\phi_{n}=\sinh\tau_{n}(x-1)+\sin\tau_{n} \cosh\tau_{n} x-\cos\tau_{n}\sinh\tau_{n} x+\frac{2\tau_{n}\sin\tau_{n}}{ic\tau_{n}^2+\gamma}\\
    &\sinh\tau_{n} x+\sin\tau_{n}(x-1)+\cos\tau_{n} x \sinh\tau_{n}-\sin\tau_{n} x \cosh\tau_{n}-\frac{2\tau_{n} \sinh\tau_{n}}{ic\tau_{n}^2+\gamma}\sin\tau_{n} x.
\end{align}
Then there holds
$(1+i\beta\tau^2)f'''_{n}(1)=2i\tau_n^2P_n,$
where
\begin{align}
  P_n=(\beta\tau_n^2-i)\tau_n\bigg[\sinh\tau_n\sin\tau_n +\frac{\tau(\sinh\tau_n\cos\tau_n
+\cosh\tau_n\sin\tau_n)}{i\alpha\tau^2+\beta}\bigg].
\end{align}
Denote by $(2\tau_n^{-2}e^{-\tau}f_{1n},2ie^{-\tau_n}f_{1n},-\frac{2i\beta^{-1}}{\lambda+\beta^{-1}}e^{-\tau_n}(\alpha-m\beta^{-1})f(1),2\tau_n^{-2}e^{-\tau_n}\phi_n,
2ie^{-\tau_n}\phi_n)$ the eigenfunction of $\mathcal{A}_1$ corresponding to $\lambda_{n}$. Then we have
\begin{align}\label{F21}
    \left\{
      \begin{array}{ll}
        f^{(4)}_{1n}(x)-\tau^4_{n}f_{1n}(x)=0, \\
        f_{1n}(0)=f_n'(0)=f''_{1n}(1)=0,  \\
       (1+i\beta\tau^2)f'''_{1n}(1)=(i\alpha\tau_n^2-m\tau_n^4)f_{1n}(1)+2i\tau_n^2 P_n,
      \end{array}
    \right.
\end{align}
The solution of (\ref{F21}) is as follows
\begin{align*}
   f_{n}(x)=b_{11}(\cosh\tau_{n}x-\cos\tau_{n}x)+b_{12}(\sinh\tau_{n}x-\sin\tau_{n}x),
\end{align*}
where
$b_{11}=-\frac{iP_n(\sinh\tau_n+\sin\tau_n)}{U_n},
        b_{12}=\frac{iP_n(\cosh\tau_n+\cos\tau_n)}{U_n},$
$U_n=\tau_n(1+\cosh\tau_n\cos\tau_n)(1+i\beta\tau_n^2)-(m\tau_n-i\alpha)(\cosh\tau_n\sin\tau_n-\sinh\tau_n\cos\tau_n).$
Hence we have
\begin{align*}
    \left\{
      \begin{array}{ll}
        2\tau_{n}^{-2}e^{-\tau_{n}}f''_{1n}(x)=-e^{-p\pi x}-\cos p\pi x\\
        +\sin p\pi x+O(n^{-1})\\
        2\tau_{n}^{-2}e^{-\tau_{n}}[\lambda_nf_{1n}(x)]=i\big[-e^{-p\pi x}+\cos p\pi x\\
        -\sin p\pi x\big]+O(n^{-1})\\
        -\frac{2i\beta^{-1}}{\lambda_n+\beta^{-1}}e^{-\tau_n}(\alpha-m\beta^{-1})f_{1n}(1)=O(n^{-1}),
      \end{array}
    \right.
\end{align*}
where $p=n+1/4$.
Denote $Q=\left(
            \begin{array}{cc}
              I_3 & J \\
              0_{3\times 3} & I_3 \\
            \end{array}
          \right)
$ with $J=\left(
                              \begin{array}{ccccc}
                                1 & 0& 0 \\
                                 0 &1& 0  \\
                                 0 &0&0 \\
                              \end{array}
                            \right)^T.$
Then $Q$ is a bounded linear operator and it has bounded inverse.
Moreover, we obtain the following relations
\begin{align}
\nonumber    &(2\tau_{1n}^{-2}e^{-\tau_{1n}}f''_{n},2ie^{-\tau_{1n}}f_{n},-\frac{2i\beta^{-1}}{\lambda_{1n}+\beta^{-1}}e^{-\tau_{1n}}(\alpha-m\beta^{-1})\cdot\\
\nonumber&f_{n}(1),0,0,0)^T=Q(2\tau_{1n}^{-2}e^{-\tau_{1n}}f''_{n},2ie^{-\tau_{1n}}f_{n},\\
\label{guanxi3}&-\frac{2i\beta^{-1}}{\lambda_{1n}+\beta^{-1}}e^{-\tau_{1n}}(\alpha-m\beta^{-1})f_{n}(1),0,0,0)^T,\\
 \nonumber    &(2\tau_n^{-2}e^{-\tau_n}f''_{1n},2ie^{-\tau_n}f_{1n},-\frac{2i\beta^{-1}}{\lambda_n+\beta^{-1}}e^{-\tau_n}(\alpha\\
 \nonumber&-m\beta^{-1})f_{1n}(1),
 2\tau_n^{-2}e^{-\tau}\phi''_n,
2ie^{-\tau}\phi_n,2\tau_n^{-2}e^{-\tau}\gamma\phi'_n(0))\\
\nonumber&=Q(0,0,0,2\tau_n^{-2}e^{-\tau}\phi''_n,
2ie^{-\tau}\phi_n,2\tau_n^{-2}e^{-\tau}\gamma\phi'_n(0))\\
\label{guanxi4}&+O(n^{-1}).
\end{align}
The same thing is true for conjugates. Then, by Bari's theorem the sequence $\{(2\tau_{1n}^{-2}e^{-\tau_{1n}}f''_{n},2ie^{-\tau_{1n}}f_{n},$
$-\frac{2i\beta^{-1}}{\lambda_{1n}+\beta^{-1}}e^{-\tau_{1n}}(\alpha-m\beta^{-1})f_{n}(1),0,0,0)\}_{n=-\infty}^\infty \bigcup\{(2\tau_n^{-2}$
$ e^{-\tau_n}f''_{1n},2ie^{-\tau_n}f_{1n},-\frac{2i\beta^{-1}}{\lambda_n+\beta^{-1}}e^{-\tau_n}(\alpha-m\beta^{-1})$
$f_{1n}(1),2\tau_n^{-2}$
$e^{-\tau}\phi''_n,
2ie^{-\tau}\phi_n,2\tau_n^{-2}e^{-\tau}\gamma\phi'_n(0))\}_{n=-\infty}^\infty\bigcup
\{(\overline{2\tau_{1n}^{-2}e^{-\tau_{1n}}f''_{n}},$
$\overline{2ie^{-\tau_{1n}}f_{n}},-\overline{\frac{2i\beta^{-1}}{\lambda_{1n}
+\beta^{-1}}e^{-\tau_{1n}}}(\alpha-m\beta^{-1})\overline{f_{n}(1)},0,0,0)$
$\}_{n=-\infty}^\infty \bigcup \{(\overline{2\tau_n^{-2}e^{-\tau_n}f''_{1n}},\overline{2ie^{-\tau_n}f_{1n}},-\overline{\frac{2i\beta^{-1}}{\lambda_n+\beta^{-1}}e^{-\tau_n}(\alpha-m}$
$\overline{\beta^{-1})f_{1n}(1)},\overline{2\tau_n^{-2}e^{-\tau}\phi''_n},$
$\overline{2ie^{-\tau}\phi_n},\overline{2\tau_n^{-2}e^{-\tau}\gamma\phi'_n(0)})\}_{n=-\infty}^\infty$
forms Riesz basis for $\left(\left(L^2(0,1)\right)^2\times \mathds{C}\right)^2$,
which is equivalent to that $\{(2\tau_{1n}^{-2}e^{-\tau_{1n}}f_{n},2ie^{-\tau_{1n}}f_{n},-\frac{2i\beta^{-1}}{\lambda_{1n}+\beta^{-1}}e^{-\tau_{1n}}(\alpha-m\beta^{-1})f_{n}(1),0,0)
\}_{n=-\infty}^\infty \bigcup \{(2\tau_n^{-2}e^{-\tau_n} f_{1n},$
$ 2ie^{-\tau_n}f_{1n},-e^{-\tau_n}\frac{2i\beta^{-1}}{\lambda_n+\beta^{-1}}(\alpha-m\beta^{-1})f_{1n}(1),2\tau_n^{-2}e^{-\tau}\phi_n,
2ie^{-\tau}\phi_n)\}_{n=-\infty}^\infty\bigcup\{(\overline{2\tau_{1n}^{-2}e^{-\tau_{1n}}f_{n}},\overline{2ie^{-\tau_{1n}}f_{n}},$
$ -\overline{\frac{2i\beta^{-1}}{\lambda_{1n}+\beta^{-1}}
e^{-\tau_{1n}}(\alpha-m\beta^{-1})f_{n}(1)},0,0)\}_{n=-\infty}^\infty \bigcup \{(\overline{2\tau_n^{-2}e^{-\tau_n}f_{1n}},\overline{2ie^{-\tau_n}f_{1n}},-\overline{\frac{2i\beta^{-1}}{\lambda_n
+\beta^{-1}}e^{-\tau_n}(\alpha-m\beta^{-1})}$
$\overline{f_{1n}(1)},\overline{2\tau_n^{-2}e^{-\tau}\phi_n},
\overline{2ie^{-\tau}\phi_n)}\}_{n=-\infty}^\infty$
forms Riesz basis for $\mathbf{H}\times \mathbb{H}$.

The semigroup generation of the operator $\mathcal{A}_1$ is directly derived by the Riesz basis property. Moreover, the Riesz basis property implies that the property of spectrum-determined growth condition holds for $\mathcal{A}_1$. Since $ \mathbf{A}$ and $\mathbb{A}$ generate exponentially stable $C_0$-semigroups and spectrum-determined growth condition holds, $\sup\{{\rm Re} \lambda:\lambda\in \sigma(\mathcal{A}_1)\}=\sup\{{\rm Re} \lambda:\lambda\in \sigma(\mathbf{A})\bigcup \sigma(\mathbb{A})\}<0$. Therefore $e^{\mathcal{A}t}$ is exponentially stable and the proof is
completed.
\end{proof}

\begin{remark}
In the proof of Theorem \ref{exponential00}, although it forms a Riesz basis
for $\mathcal{H}_1$, not all the components of
the sequence $\{(0,0,0,2\tau_{n}^{-2}e^{-\tau_{n}}\phi_{n},2ie^{-\tau_{n}}\phi_{n})\}_{n=1}^\infty\bigcup \{(0,0,0,\overline{2\tau_{n}^{-2}e^{-\tau_{n}}\phi_{n}},\overline{2ie^{-\tau_{n}}\phi_{n})})$
$\}_{n=1}^\infty\bigcup\{(2\tau_{1n}^{-2}e^{-\tau_{1n}}f_{n},2ie^{-\tau_{1n}}f_{n},
-\frac{2i\beta^{-1}}{\lambda_{1n}+\beta^{-1}} $
$e^{-\tau_{1n}}(\alpha-m\beta^{-1})f_{n}(1),0,0)\}_{n=1}^\infty\bigcup \{(\overline{2\tau_{1n}^{-2}e^{-\tau_{1n}}f_{n}},-2i\overline{e^{-\tau_{1n}}}$
$\overline{f_{n}},-\overline{\frac{2i\beta^{-1}}{\lambda_{1n}+\beta^{-1}}e^{-\tau_{1n}}(\alpha-m\beta^{-1})f_{n}(1)},0,0)\}_{n=1}^\infty$  are the generalized eigenfunctions of $\mathcal{A}$. To overcome this difficulty, the key step is to find out the relations (\ref{guanxi3}) and (\ref{guanxi4}).
\end{remark}

By Theorem \ref{exponential00}, it follows that, there exist two positive constants $M_{\mathcal{A}_1}$ and $\omega_{\mathcal{A}_1}$ such that
\begin{align*}
  \|e^{\mathcal{A}_1t}\|_{\mathcal{H}_1}\leq M_{\mathcal{A}_1}e^{\omega_{\mathcal{A}_1}t},\;\; t\geq 0.
\end{align*}

\begin{theorem}\label{maintheorem}
Assume that $d\in L^\infty(0,\infty)$ (or $d\in L^2(0,\infty)$) and $v:H^2(0,1)\times L^2(0,1)\rightarrow \mathds{R}$ is continuous.
For any initial value $(w(\cdot,0),w_t(\cdot,0),-l_{xxx}(1,t)+z_{xxx}(1,t)+m\beta^{-1}w_t(1,t),l(\cdot,0),l_t(\cdot,0),$
$z(\cdot,0),z_t(\cdot,0)) \in \mathbf{H}$
$\times \left(H_L^2(0,1)\times L^2(0,1)\right)^2$
with $z(1,0)=l(1,0)-w(1,0)$, there exists a unique solution to system (\ref{perror110}) such that
$(w(\cdot,t),w_t(\cdot,t),l(\cdot,t),l_t(\cdot,t),z(\cdot,t),z_t(\cdot,t))\in C(0,\infty;H^2_E(0,1)\times L^2(0,1)\times (H^2_L(0,1)\times L^2(0,1))^2)$ satisfying properties $z(1,t)=l(1,t)-w(1,t)$,
\begin{align}
  \nonumber&\int_0^1(|w_t(x,t)|^2+|w_{xx}(x,t)|^2)dx+|-l_{xxx}(1,t)\\
 \label{P1} &+m\beta^{-1} w_t(1,t)+z_{xxx}(1,t)|^2\leq M_2e^{-\gamma_2 t},\\
   \nonumber &\sup_{t\geq 0} \bigg[\int_0^1(|l_t(x,t)|^2+|l_{xx}(x,t)|^2+|z_t(x,t)|^2\\
   \nonumber&+|z_{xx}(x,t)|^2)dx+\gamma(|l_x(0,t)|^2+|z_x(0,t)|^2)\\
\label{p2}&+|l_t(1,t)-w_t(1,t)|^2)\bigg]<\infty,
\end{align}
where $M_3$ and $\gamma_3$ are two positive constants.
If in addition $v(0,0)=0$ and $d\in L^2(0,\infty)$, then
\begin{align}\label{P3}
 \nonumber&\lim_{t\rightarrow +\infty} \bigg[\int_0^1(|l_t(x,t)|^2+|l_{xx}(x,t)|^2+|z_t(x,t)|^2\\
&+|z_{xx}(x,t)|^2)dx+\gamma(|l_x(0,t)|^2+|z_x(0,t)|^2)+|l_t(1,t)-w_t(1,t)|^2\bigg]=0.
\end{align}
\end{theorem}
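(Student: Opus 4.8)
The plan is to decompose the closed-loop system (\ref{perror110}) into the coupled subsystem (\ref{wzwanclosed}), whose well-posedness and exponential decay are already established in Theorem \ref{exponential00}, together with the remaining ``estimator'' variables, and then handle those via a variation-of-constants argument. First I would introduce the new variable $\widehat{l}(\cdot,t)=l(\cdot,t)-w(\cdot,t)$, which satisfies (\ref{perror}), and $\widehat{z}(\cdot,t)=z(\cdot,t)-\widehat{l}(\cdot,t)$, which satisfies (\ref{erroerrorobserver}). The boundary condition $z(1,t)=l(1,t)-w(1,t)$ becomes exactly $\widehat{z}(1,t)=0$, so the constraint in the initial data is consistent and $\widehat{z}$ lives in the space $\mathbb{H}$. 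Existence and uniqueness of $(w,w_t)$ and of $(\widehat{z},\widehat{z}_t)$ then follow from Theorem \ref{exponential00} applied to (\ref{wzwanclosed}); and existence of $(\widehat{l},\widehat{l}_t,m\widehat{l}_t(1,\cdot))$ in $C(0,\infty;\mathbf{H}_2)$ follows from Lemma \ref{admissible}, once one checks its hypotheses, namely that $(w,w_t)$ is a bounded solution of (\ref{beem}) in $C(0,\infty;H^2(0,1)\times L^2(0,1))$. Unwinding $l=\widehat{l}+w$ and $z=\widehat{z}+\widehat{l}+w$ then recovers the original variables and the claimed regularity, and the identity $z(1,t)=l(1,t)-w(1,t)$ is automatic.

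The estimate (\ref{P1}) is essentially a restatement of the exponential decay of $e^{\mathcal{A}_1 t}$ from Theorem \ref{exponential00}: the state $X(t)=(w(\cdot,t),w_t(\cdot,t),-w_{xxx}(1,t)+m\beta^{-1}w_t(1,t)+\widehat{z}_{xxx}(1,t),\widehat{z}(\cdot,t),\widehat{z}_t(\cdot,t))$ decays like $M_{\mathcal{A}_1}e^{\omega_{\mathcal{A}_1}t}\|X(0)\|_{\mathcal{H}_1}$, and since $-l_{xxx}(1,t)+z_{xxx}(1,t)=\widehat{l}_{xxx}(1,t)+\widehat{z}_{xxx}(1,t)$ --- wait, more carefully: $-l_{xxx}(1,t)+z_{xxx}(1,t)+m\beta^{-1}w_t(1,t)$ equals the third component of $X(t)$ up to the substitution $z-l=\widehat z-w$, so its square is controlled by $\|X(t)\|^2_{\mathcal{H}_1}$. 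Hence (\ref{P1}) holds with $\gamma_2=-2\omega_{\mathcal{A}_1}>0$ and $M_2$ proportional to $\|X(0)\|^2_{\mathcal{H}_1}$. For the boundedness statement (\ref{p2}), the norm being bounded is $\|(\widehat{l}(\cdot,t),\widehat{l}_t(\cdot,t),m\widehat{l}_t(1,t))\|^2_{\mathbf{H}_2}$ plus the analogous quantity for $z$; writing $l=\widehat l+w$, $z=\widehat z+\widehat l+w$ and using the triangle inequality, everything reduces to the boundedness of $\widehat l$ in $\mathbf{H}_2$ (Lemma \ref{admissible}), the exponential decay of $\widehat z$ and of $(w,w_t)$ (Theorem \ref{exponential00}), together with $\sup_t|\widehat z_{xxx}(1,t)|<\infty$ from Lemma \ref{zestimate}. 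Finally, the asymptotic statement (\ref{P3}): under $v(0,0)=0$ (the paper writes $v(0,0)=0$; global Lipschitz plus $v(0,0)=0$ combined with $\|(w,w_t)\|\to 0$ yields $v(w(\cdot,t),w_t(\cdot,t))\to 0$) and $d\in L^2(0,\infty)$, the last clause of Lemma \ref{admissible} gives $\|(\widehat l,\widehat l_t,m\widehat l_t(1,\cdot))\|_{\mathbf{H}_2}\to 0$; combined with the exponential decay of $\widehat z$ and $(w,w_t)$, the quantity in (\ref{P3}) tends to $0$.

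The one genuinely delicate point --- and the main obstacle --- is the self-referential nature of Lemma \ref{admissible}: its hypothesis is that (\ref{beem}) \emph{already} admits a unique bounded solution $(w,w_t)\in C(0,\infty;H^2\times L^2)$, yet in the closed loop the forcing $F(t)=v(w,w_t)+d$ in (\ref{beem}) depends on the very solution whose existence one is trying to establish, and $w$ in turn is driven (through the control law) by $l$ and $z$, hence by $\widehat l$. So the clean logical order must be: (i) from Theorem \ref{exponential00}, $(w,w_t)$ together with $\widehat z$ solve the \emph{autonomous} system (\ref{wzwanclosed}) and are therefore globally defined and exponentially decaying, \emph{independently} of $F$ --- the disturbance and uncertainty have been entirely absorbed into the $\widehat z$-dynamics via the estimator; (ii) in particular $(w,w_t)$ is bounded in $H^2\times L^2$, which is precisely the hypothesis needed to invoke Lemma \ref{admissible} for $\widehat l$; (iii) then reconstruct $l=\widehat l+w$ and $z=\widehat z+\widehat l+w$. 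One must check that this reconstruction does solve the original system (\ref{perror110}), i.e. that the control law (\ref{feedback11}) written in terms of $l,z$ coincides with the feedback appearing in (\ref{wzwanclosed}) after the substitutions --- this is the computation already sketched in the text leading to (\ref{wzwanclosed}), and amounts to verifying $z_{xxx}(1,t)-mz_{tt}(1,t)-\widehat z_{xxx}(1,t)=F(t)-$(the inhomogeneous boundary term of $w$), together with matching the $\alpha,\beta$ terms. I would also take care with the third coordinate's bookkeeping (the $m\beta^{-1}$ shift versus $m$), since the abstract state uses $m\beta^{-1}w_t(1,t)$ while the physical boundary condition has $mw_{tt}(1,t)$; this is a routine but error-prone change of variables that should be done explicitly once.
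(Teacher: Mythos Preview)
Your approach matches the paper's almost exactly: decouple into the autonomous $(w,\widehat z)$-system governed by Theorem \ref{exponential00}, then feed the now-known bounded $w$ into Lemma \ref{admissible} to control $\widehat l$, and finally reconstruct $l,z$ by the triangle inequality. Three small corrections worth making when you write it out: (i) the reconstruction is $z=\widehat z+\widehat l$, not $\widehat z+\widehat l+w$ (this matters for the bound on the $z$-terms in (\ref{p2})); (ii) you do not need Lemma \ref{zestimate}, which would require $D(\mathbb A)$ initial data not assumed here---the term $\widehat z_{xxx}(1,t)$ is already part of the third component of the $\mathcal H_1$-state, so its decay comes straight from Theorem \ref{exponential00}; (iii) for (\ref{P3}) the paper uses only continuity of $v$ together with $v(0,0)=0$ and $(w,w_t)\to 0$, not a Lipschitz condition.
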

\begin{proof}\ \
Fix the initial value $(w(\cdot,0),w_t(\cdot,0),-l_{xxx}(1,t)$
$+z_{xxx}(1,t)+m\beta^{-1}w_t(1,t),l(\cdot,0),l_t(\cdot,0),z(\cdot,$
$0),z_t(\cdot,0)) \in \mathbf{H}$
$\times \left(H_L^2(0,1)\times L^2(0,1)\right)^2$
with $z(1,0)=l(1,0)-w(1,0)$.
This indicates that $(\widehat{z}(\cdot,t),\widehat{z}_t(\cdot,t))=e^{\mathbb{A}t}(\widehat{z}(\cdot,0),\widehat{z}_t(\cdot,0))\in \mathbb{H}$,
thereby $z(1,t)=l(1,t)-w(1,t)$.
By Theorem \ref{exponential00}, it follows that
\begin{align}
   \nonumber &\int_0^1(|w_t(x,t)|^2+|w_{xx}(x,t)|^2)dx+|-l_{xxx}(1,t)+m\beta^{-1} w_t(1,t)+z_{xxx}(1,t)|^2\\
\nonumber&=\int_0^1(|w_t(x,t)|^2+|w_{xx}(x,t)|^2)dx+|-w_{xxx}(1,t)+m\beta^{-1} w_t(1,t)+\widehat{z}_{xxx}(1,t)|^2\\
\nonumber&\leq \|(w(\cdot,t),w_t(\cdot,t),-w_{xxx}(1,t)+m\beta^{-1}w_t(1,t)+\widehat{z}_{xxx}(1,t),\widehat{z}(\cdot,t),\widehat{z}_t(\cdot,t)) \|_{\mathcal{H}_1}^2\\
\nonumber&\leq M^2_{\mathcal{A}_1}e^{-2\omega_{\mathcal{A}_1}t}\|(w(\cdot,0),w_t(\cdot,0),-w_{xxx}(1,0)+m\beta^{-1}w_t(1,0)
+\widehat{z}_{xxx}(1,0),\widehat{z}(\cdot,0),\widehat{z}_t(\cdot,0)) \|_{\mathcal{H}_1}^2\\
&= M_2e^{-\gamma_2 t},
\end{align}
where $M_{\mathcal{A}_1}$ and $\omega_{\mathcal{A}_1}$ are two positive constants,  $\gamma_2=2\omega_{\mathcal{A}_1}$ and $M_2=M^2_{\mathcal{A}_1}
\|(w(\cdot,0),w_t(\cdot,0),$
$-l_{xxx}(1,0)+m\beta^{-1}w_t(1,0)
+z_{xxx}(1,0),z(\cdot,0)-l(\cdot,0)+w(\cdot,0),z_t(\cdot,0)-l_t(\cdot,0)+w_t(\cdot,0)) \|^2_{\mathcal{H}_1}$.
This is (\ref{P1}).

The continuity and exponential stability of $(w(\cdot,t),w_t(\cdot,t))$ on $H_E^2(0,1)\times L^2(0,1)$ is therefore obtained.
Since $f$ is continuity, we derive $v(w(\cdot,t),w_t(\cdot,t))\in L^\infty(0,\infty)$.
By Lemma \ref{admissible}, we obtain $\big(\widehat{l}(\cdot,t),\widehat{l}_t(\cdot,t),m\widehat{l}_t(1,t)\big)\in C(0,\infty;\mathbf{H}_2)$
and $\sup_{t\geq 0}\|\big(\widehat{l}(\cdot,t),\widehat{l}_t(\cdot,t),$
$m\widehat{l}_t(1,t)\big)\|_{\mathbf{H}_2}<+\infty$.
Then, we have $\sup_{t\geq 0}|l_t(1,t)-w_t(1,t)|<\infty$,
\begin{align}
  \nonumber & \int_0^1[|l_t(x,t)|^2+|l_{xx}(x,t)|^2]dx+\gamma|l_x(0,t)|^2\\
\nonumber&\leq 2\int_0^1[|\widehat{l}_t(x,t)|^2+|\widehat{l}_{xx}(x,t)|^2]dx+2\gamma|\widehat{l}_x(0,t)|^2+2\int_0^1[|w_t(x,t)|^2+|w_{xx}(x,t)|^2]dx\\
\nonumber&\leq 2\sup_{t\geq 0}\|\big(\widehat{l}(\cdot,t),\widehat{l}_t(\cdot,t),m\widehat{l}_t(1,t)\big)\|^2_{\mathbf{H}_2}
\\
\nonumber&+2\|(w(\cdot,t),w_t(\cdot,t),-w_{xxx}(1,t)+m\beta^{-1}w_t(1,t)+\widehat{z}_{xxx}(1,t),\widehat{z}(\cdot,t),\widehat{z}_t(\cdot,t)) \|_{\mathcal{H}_1}^2\\
&<\infty,
\end{align}
and
\begin{align*}
   & \int_0^1[|z_t(x,t)|^2+|z_{xx}(x,t)|^2]dx+\gamma|z_x(0,t)|^2\\
&\leq 2\int_0^1[|\widehat{l}_t(x,t)|^2+|\widehat{l}_{xx}(x,t)|^2]dx+2\gamma|\widehat{l}_x(0,t)|^2\\
&+2\int_0^1[|\widehat{z}_t(x,t)|^2+|\widehat{z}_{xx}(x,t)|^2]dx+2\gamma|\widehat{z}_x(0,t)|^2\\
&\leq 2\sup_{t\geq 0}\|\big(\widehat{l}(\cdot,t),\widehat{l}_t(\cdot,t),m\widehat{l}_t(1,t)\big)\|^2_{\mathbf{H}_2}
+2\|(w(\cdot,t),\\
&w_t(\cdot,t),-w_{xxx}(1,t)+m\beta^{-1}w_t(1,t)+\widehat{z}_{xxx}(1,t),\\
&\widehat{z}(\cdot,t),\widehat{z}_t(\cdot,t)) \|_{\mathcal{H}_1}^2<\infty.
\end{align*}
Hence (\ref{perror110}) has a unique solution and $(w(\cdot,t),w_t(\cdot,t),l(\cdot,t),$
$l_t(\cdot,t),z(\cdot,t),z_t(\cdot,t))\in C(0,\infty;$
$H^2_E(0,1)\times L^2(0,1)\times (H^2_L(0,1)\times L^2(0,1))^2)$ and (\ref{p2}) is derived.

In the case $v(0,0)=0$, the combination of the continuity of $f$ and the exponential stability of $(w(\cdot,t),w_t(\cdot,t))$ implies that $\lim_{t\rightarrow \infty}v(w(\cdot,t),w_t(\cdot,t))=0$.
The combination of the assumption $d\in L^2(0,\infty)$ and \cite[Lemma A.1]{Zhou2018a} indicates that
$\lim_{t\rightarrow\infty}\|(\widehat{l}(\cdot,t),\widehat{l}_t(\cdot,t),m\widehat{l}_t(1,t))\|_{\mathbb{H}_1}=0.$
Hence the claim (\ref{P3}) holds. The proof is completed.
\end{proof}

\begin{remark}
Theorem \ref{maintheorem} not only gives the exponential stability of $(w(\cdot,t),w_t(\cdot,t))$ and
boundednesses of $(l(\cdot,t),$
$l_t(\cdot,t),z(\cdot,t),z_t(\cdot,t))$, but also the exponential stability of
$-l_{xxx}(1,t)+m\beta^{-1} w_t(1,t)+z_{xxx}(1,t)$ and the boundedness of $l_t(1,t)-w_t(1,t)$.
The reason is that
$\frac{d}{d t}[-l_{xxx}(1,t)+m\beta^{-1} w_t(1,t)+z_{xxx}(1,t)]
  =\frac{d}{d t}[-w_{xxx}(1,t)+m\beta^{-1} w_t(1,t)+\widehat{z}_{xxx}(1,t)]
  =\beta^{-1}[w_{xxx}(1,t)-\alpha w_t(1,t)-\widehat{z}_{xxx}(1,t)]
  =\beta^{-1}[-\alpha w_t(1,t)-z_{xxx}(1,t)+l_{xxx}(1,t)]$
and
$\frac{d}{d t}[m(l_t(1,t)-w_t(1,t))]=\widehat{l}_{xxx}(1,t)-F(t)=l_{xxx}(1,t)-w_{xxx}(1,t)+v(w(\cdot,t),w_t(\cdot,t))+d(t)$ are regarded as dynamic boundary conditions of the closed-loop system (\ref{perror110}).
\end{remark}

\begin{remark}
The linear (\ref{wzwanclosed}) is essentially important in the proof of Theorem \ref{maintheorem},
because by virtue of semigroup theorem, the existence, continuity and exponential stability of the solution of (\ref{beem}) can
be obtained without the global Lipsichitz condition. This implies that $(w(\cdot,t),w_t(\cdot,t))$ is continuous and bounded. By Lemma \ref{admissible},
the existence and boundedness of $(\widehat{p}(\cdot,t),\widehat{p}_t(\cdot,t))$ are thereby derived by viewing $v(w(\cdot,t),w_t(\cdot,t))+d(t)$ as the boundary input.
\end{remark}

\begin{remark}
In Theorem \ref{maintheorem}, the differences and improvements with respect to the results in \cite{Li2017} mainly lie in that, 1) the internal uncertainty is taken into consideration, while \cite{Li2017} just studies the case of $v(w,w_t)=0$, 2) only ``low order'' measurements $w(1,t)$ and $w_{xx}(0,t)$ are adopted, while in \cite{Li2017} the velocity $w_{t}(1,t)$ as well as the high order collocated feedback $w_{xxxt}(1,t)$ were used, 3) we consider arbitrary $m,\alpha,\beta>0$ while \cite{Li2017} just solved the special case $\alpha,\beta>0$ and $m=\alpha\beta$.
\end{remark}

\section{Numerical simulation}

In order to illustrate the effectiveness of the proposed feedback control, we present in this section some numerical simulations for the closed-loop system (\ref{perror110}). Finite difference scheme is adopted and the numerical results are programmed in Matlab.
The steps of time and the space step are chosen as $1/2000$ and $1/10$, respectively. We take the nonlinear internal uncertainty
$v(w(\cdot,t),w_t(\cdot,t)) = \cos(w(1,t))$ and the external disturbance $d(t)=\sin(3t)$. We chose the tip mass $m=5$.
The turning parameters are taken as $c=\alpha=1,\beta=\gamma=2,$  and the initial values are chosen by
\begin{align*}
  \left\{
    \begin{array}{ll}
      w(x,0)=x^2, w_t(x,0)=0,  \\
      l(x,0)=x^3, l_t(x,0)=0,\\
      z(x,0)=2x, z_t(x,0)=0.
    \end{array}
  \right.
\end{align*}

Fig.1, Fig.3 and Fig. 5 are the displacements $w(x,t)$, $l(x,t)$ and $z(x,t)$, respectively.
Fig.2, Fig.4 and Fig. 6 show the velocities $w_t(x,t)$, $l_t(x,t)$ and $z_t(x,t)$, respectively.
Fig.7 is $\eta(t)=m\beta^{-1}w_t(1,t)-w_{xxx}(1,t)+\widehat{z}_{xxx}(1,t)]$, and
Fig.8 is $\phi(t)=m[l_t(1,t)-w_t(1,t)]$.
Fig.9 shows the time response of the input, and Fig. 10 displays time response of $w_t(1,t)$.
One can see from the simulations that $(w,w_t)$, $\eta$ and $w_t(1,t)$ decays rapidly; $(l,l_t)$, $(z,z_t)$,$\phi(t)=m[l(1,t)-w(1,t)]$ and $u(t)$ are bounded.

\begin{figure}
\begin{minipage}[t]{0.48\linewidth}
\centering     
\includegraphics[height=5cm,width=8cm]{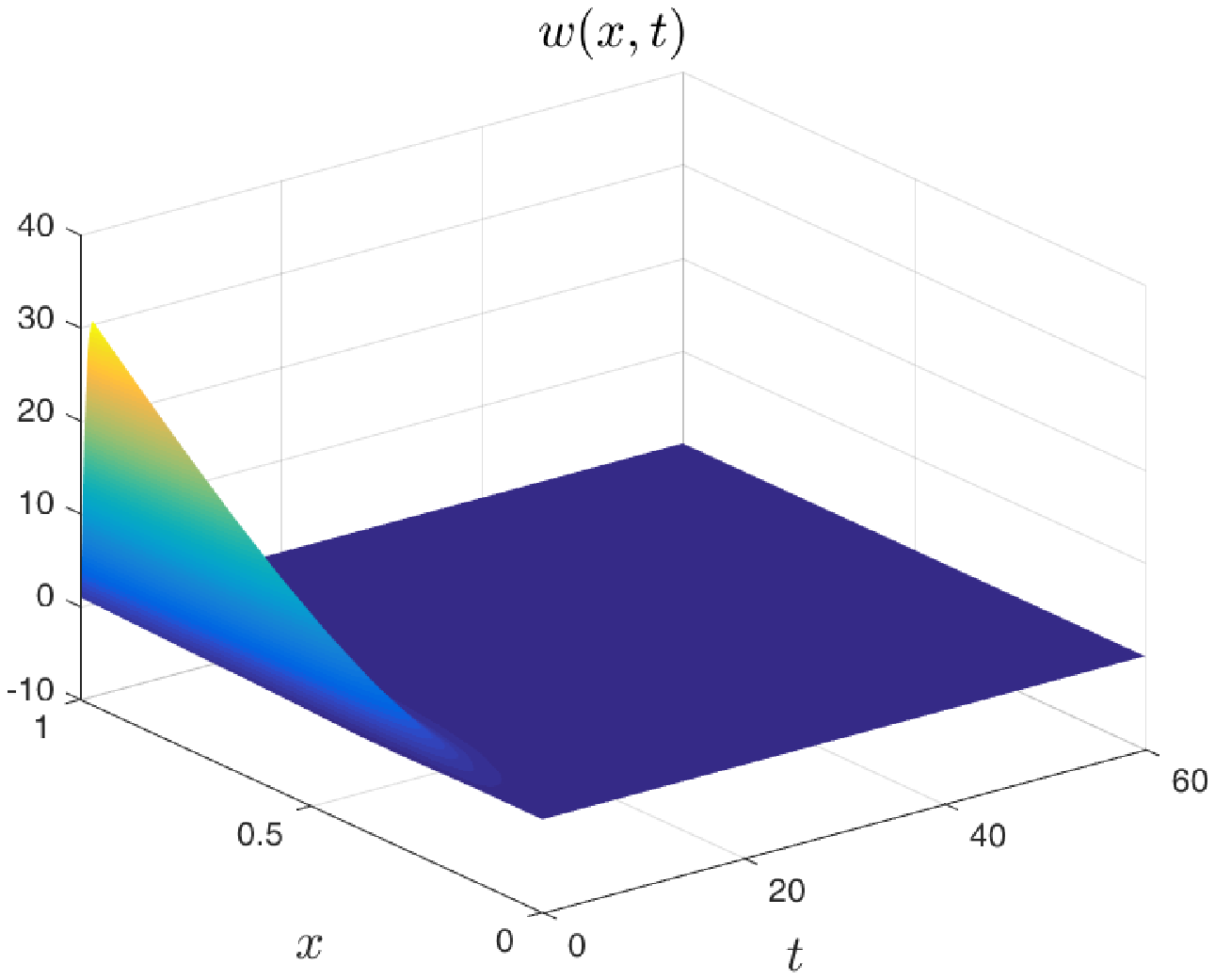}
\caption{The state $w(x,t)$.}
\label{1}
\end{minipage}
\hfill
\begin{minipage}[t]{0.48\linewidth}
\centering
\includegraphics[height=5cm,width=8.cm]{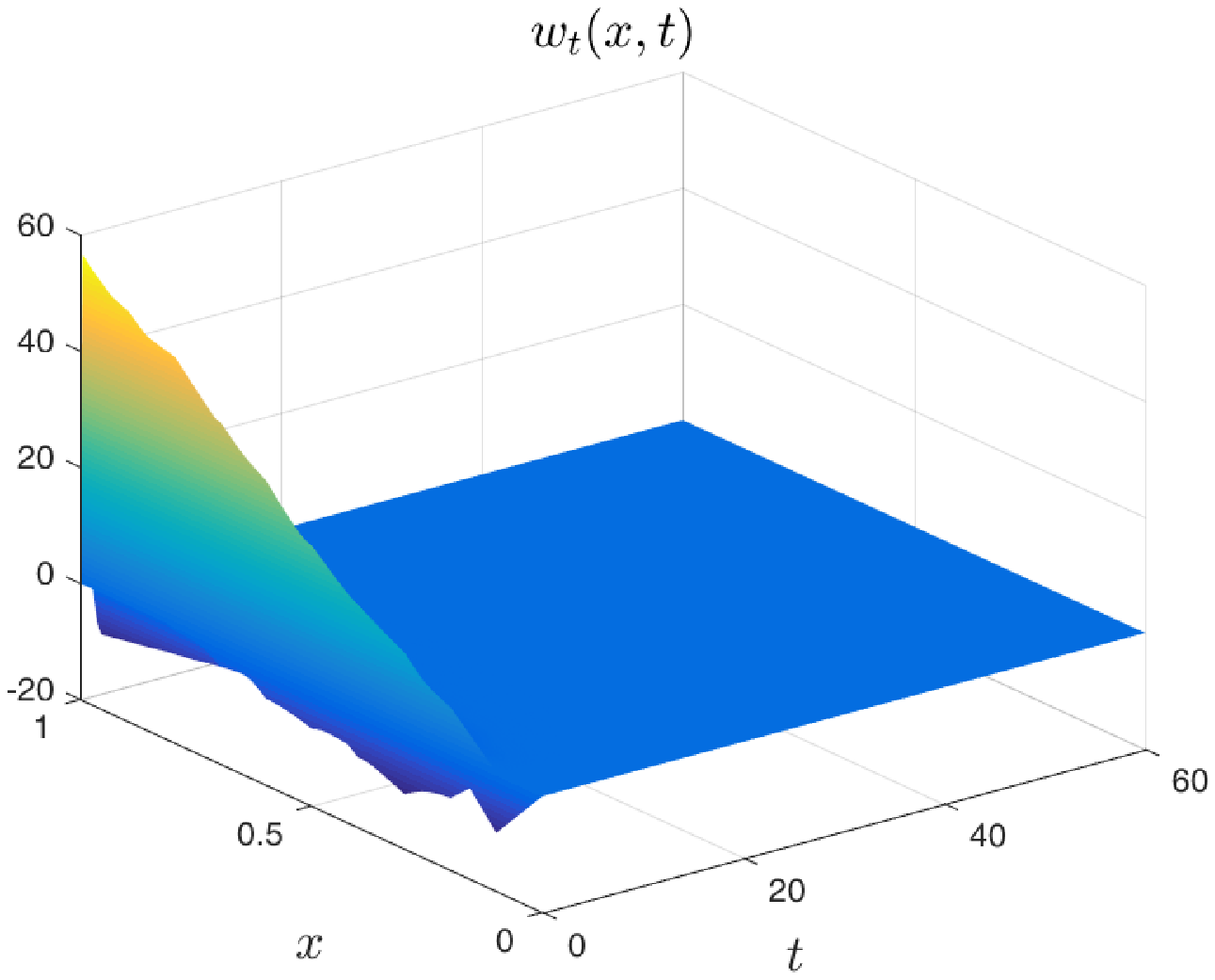}
\caption{The state $w_t(x,t)$.}
\label{2}
\end{minipage}
\end{figure}

\begin{figure}
\begin{minipage}[t]{0.48\linewidth}
\centering     
\includegraphics[height=5cm,width=8cm]{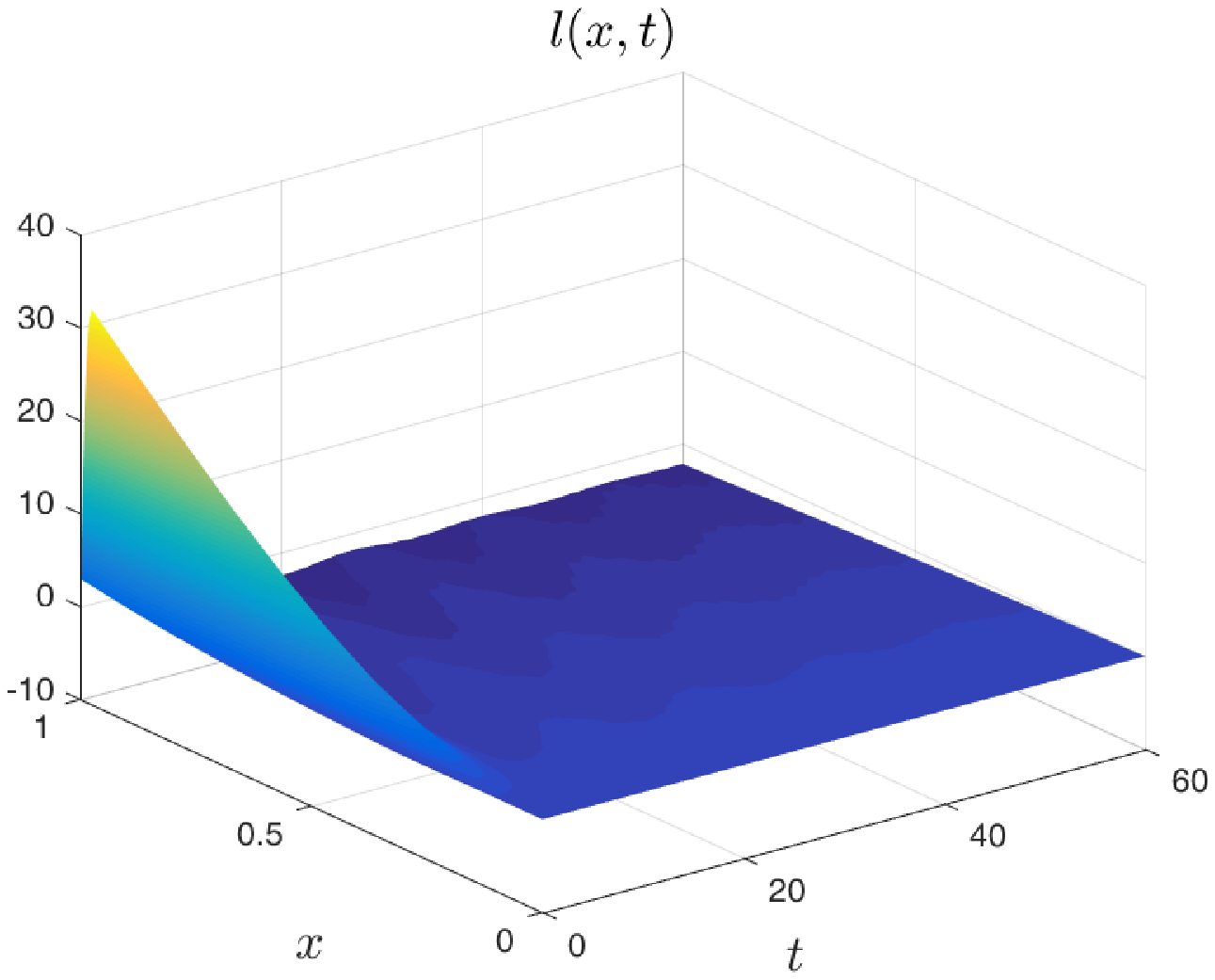}
\caption{The state $l(x,t)$.}
\label{3}
\end{minipage}
\hfill
\begin{minipage}[t]{0.48\linewidth}
\centering
\includegraphics[height=5cm,width=8cm]{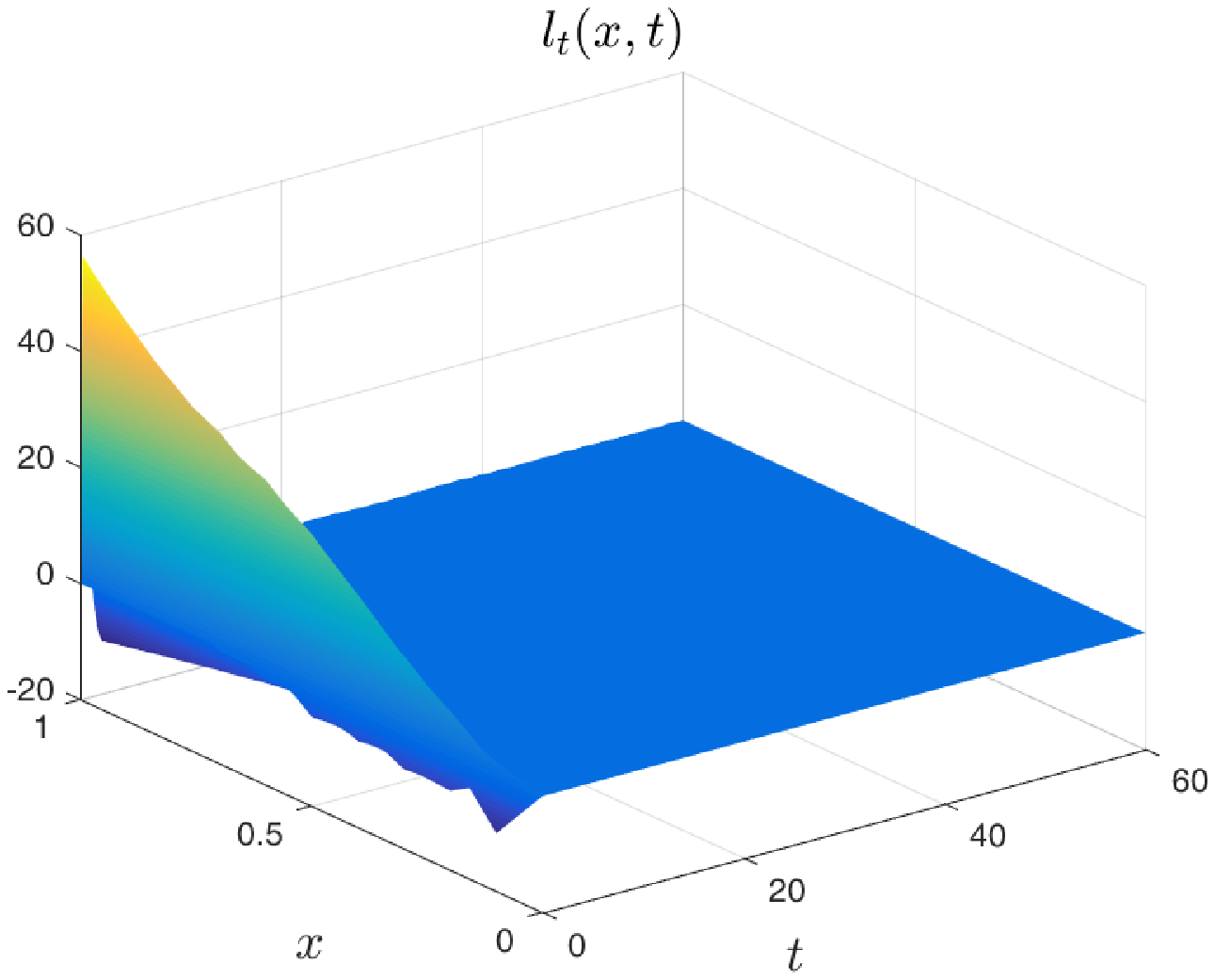}
\caption{The state $l_t(x,t)$.}
\label{4}
\end{minipage}
\end{figure}

%
%
%
%

%
%
%

\begin{figure}
\begin{minipage}[t]{0.48\linewidth}
\centering     
\includegraphics[height=5cm,width=8cm]{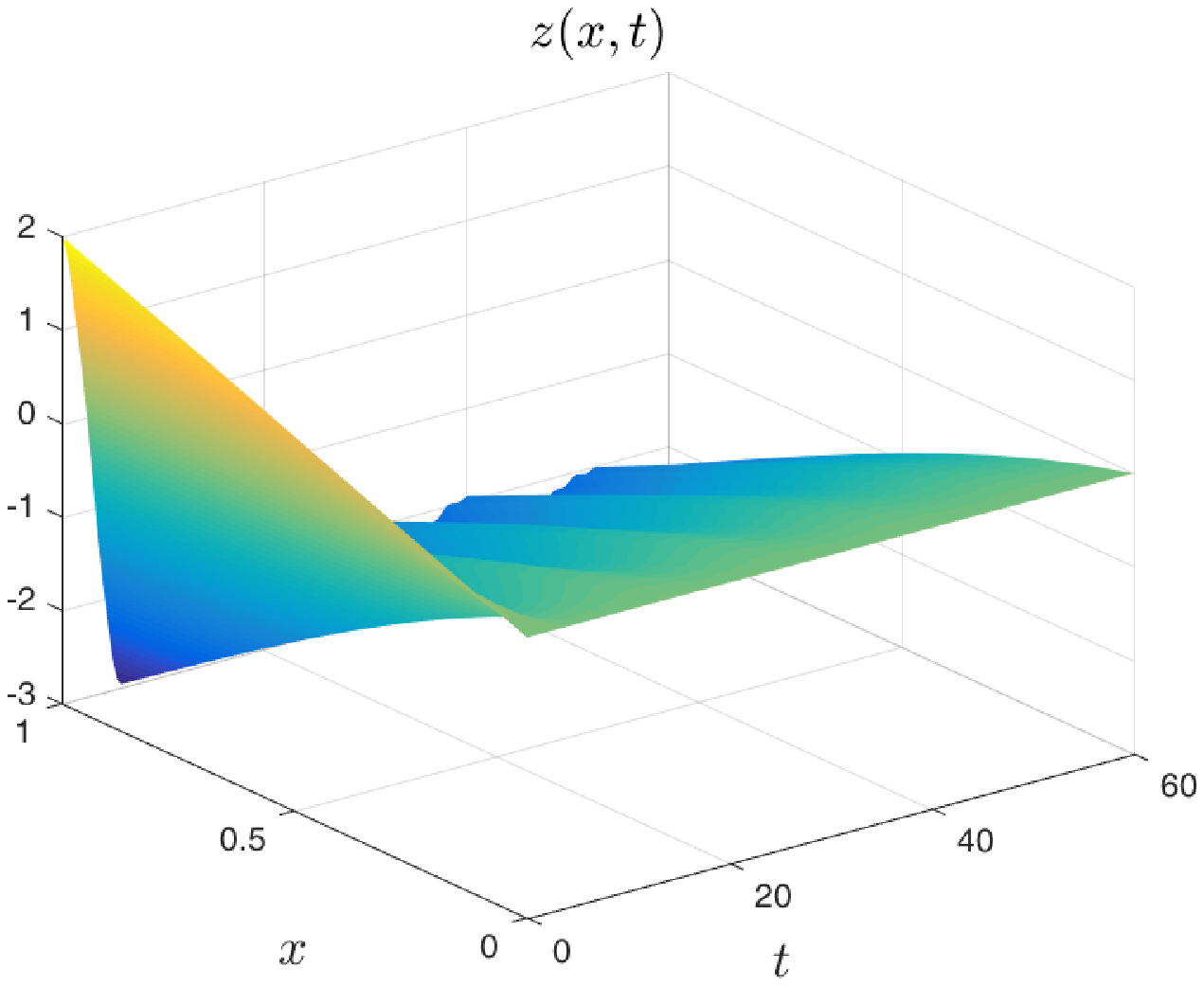}
\caption{The state $z(x,t)$.}
\label{5}
\end{minipage}
\hfill
\begin{minipage}[t]{0.48\linewidth}
\centering
\includegraphics[height=5cm,width=8cm]{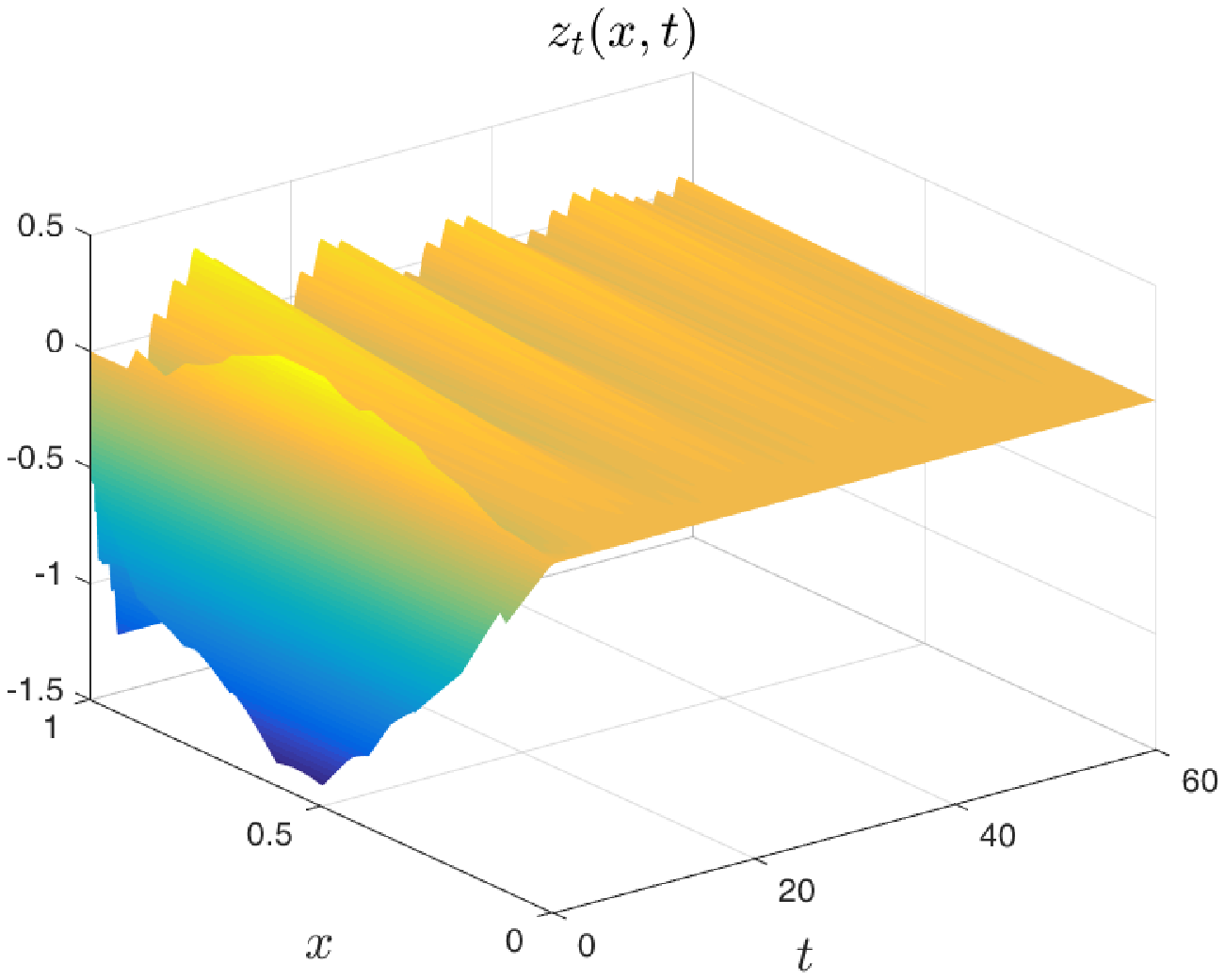}
\caption{The state $z_t(x,t)$.}
\label{6}
\end{minipage}
\end{figure}

%
%
%
%

\begin{figure}
\begin{minipage}[t]{0.48\linewidth}
\centering     
\includegraphics[height=5cm,width=8cm]{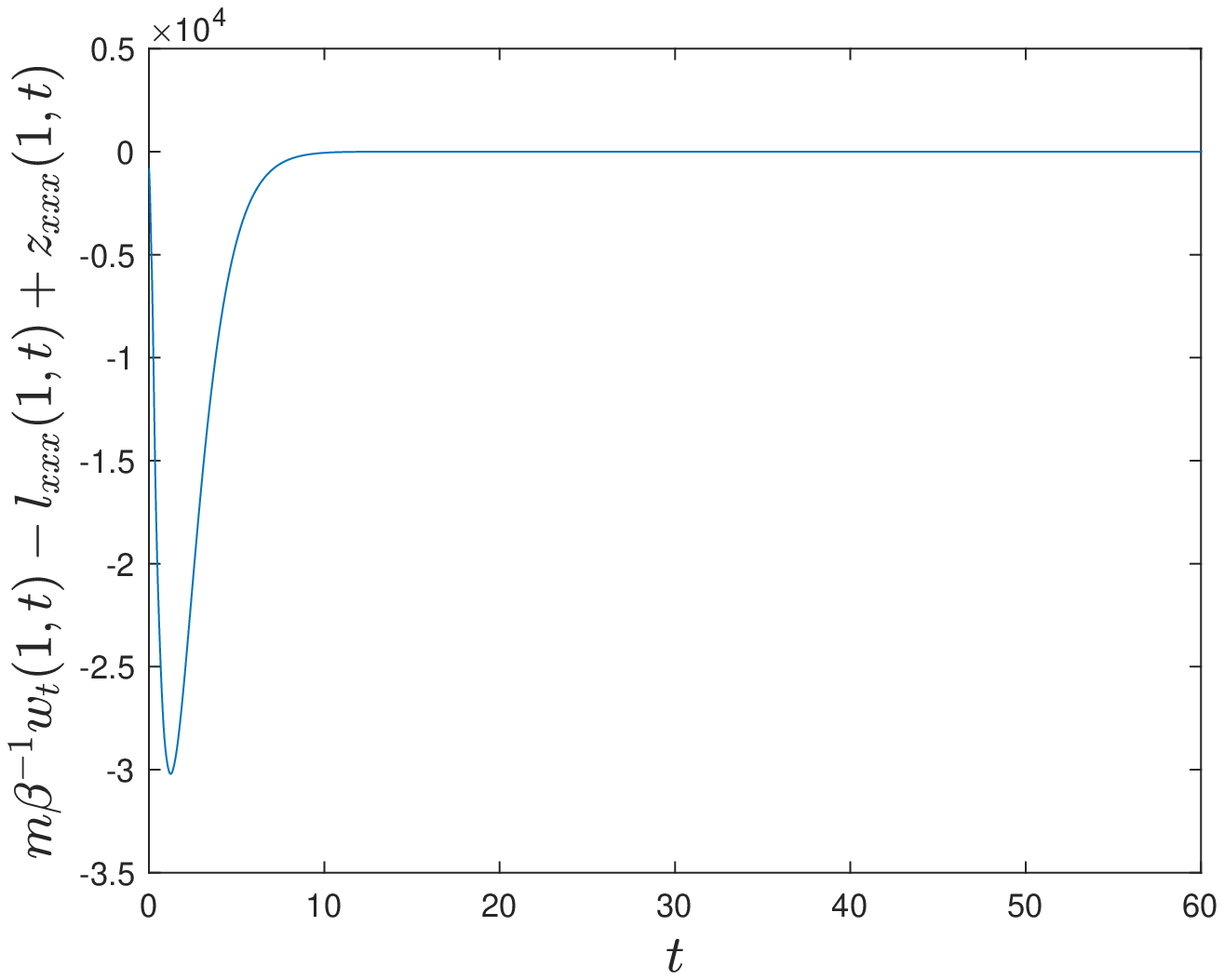}
\caption{The state $\eta(t)=m\beta^{-1}$
$w_t(1,t)-l_{xxx}(1,t)+z_{xxx}(1,t)]$.}
\label{7}
\end{minipage}
\hfill
\begin{minipage}[t]{0.48\linewidth}
\centering
\includegraphics[height=5cm,width=8cm]{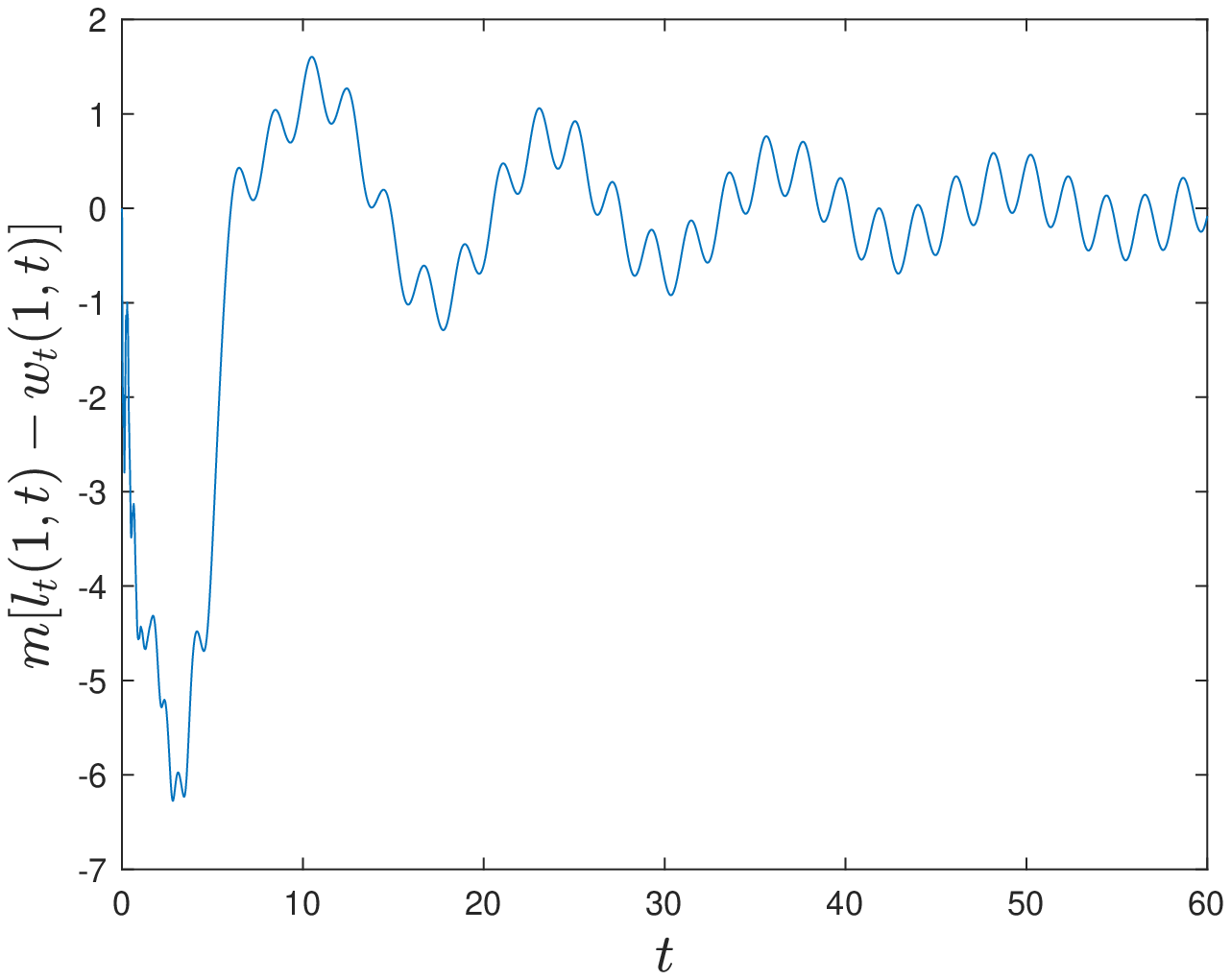}
\caption{The state $\phi(t)=m[l(1,t)-w(1,t)]$.}
\label{8}
\end{minipage}
\end{figure}

\begin{figure}
\begin{minipage}[t]{0.48\linewidth}
\centering     
\includegraphics[height=5cm,width=8cm]{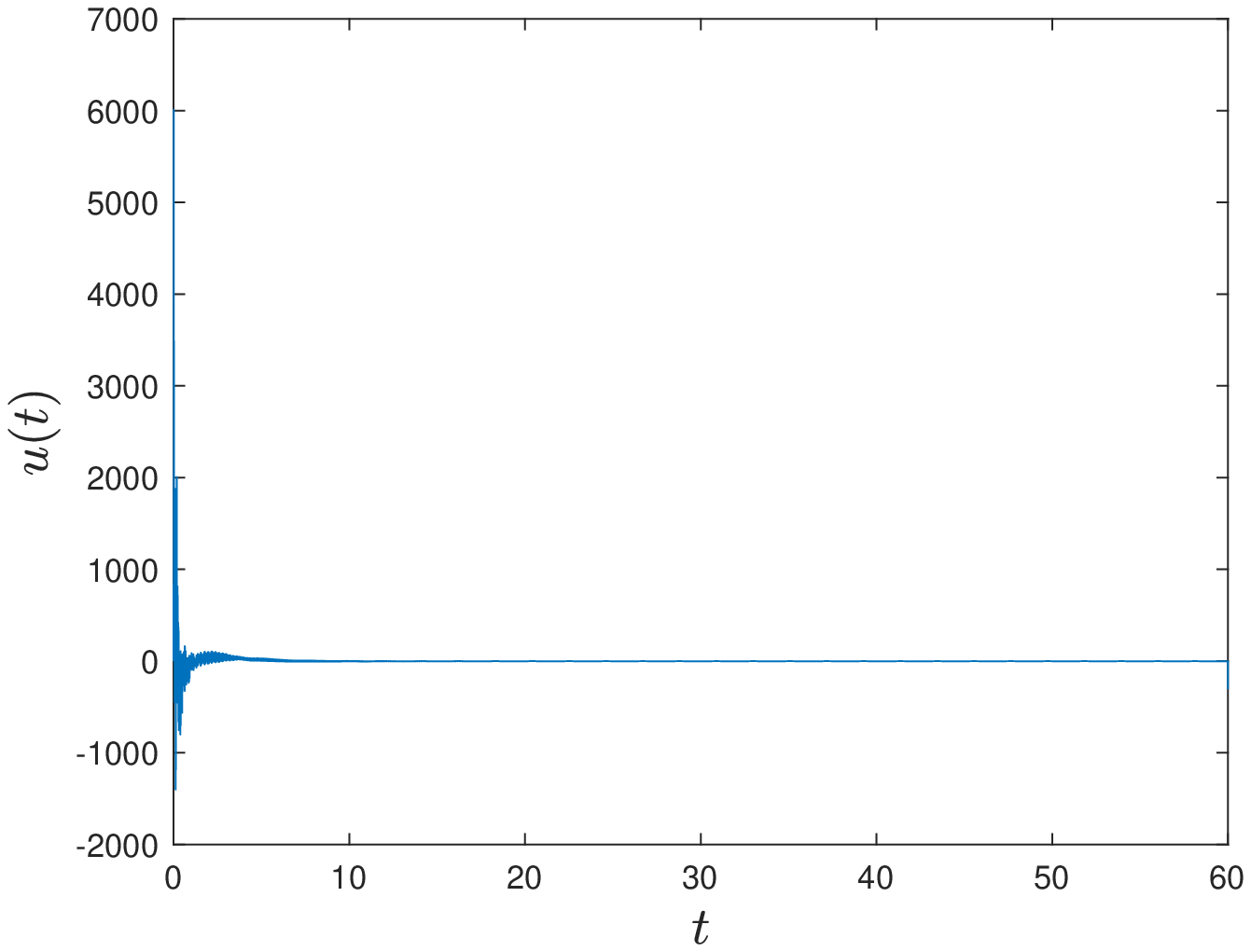}
\caption{The control $u(t)$.}
\label{9}
\end{minipage}
\hfill
\begin{minipage}[t]{0.48\linewidth}
\centering
\includegraphics[height=5cm,width=8cm]{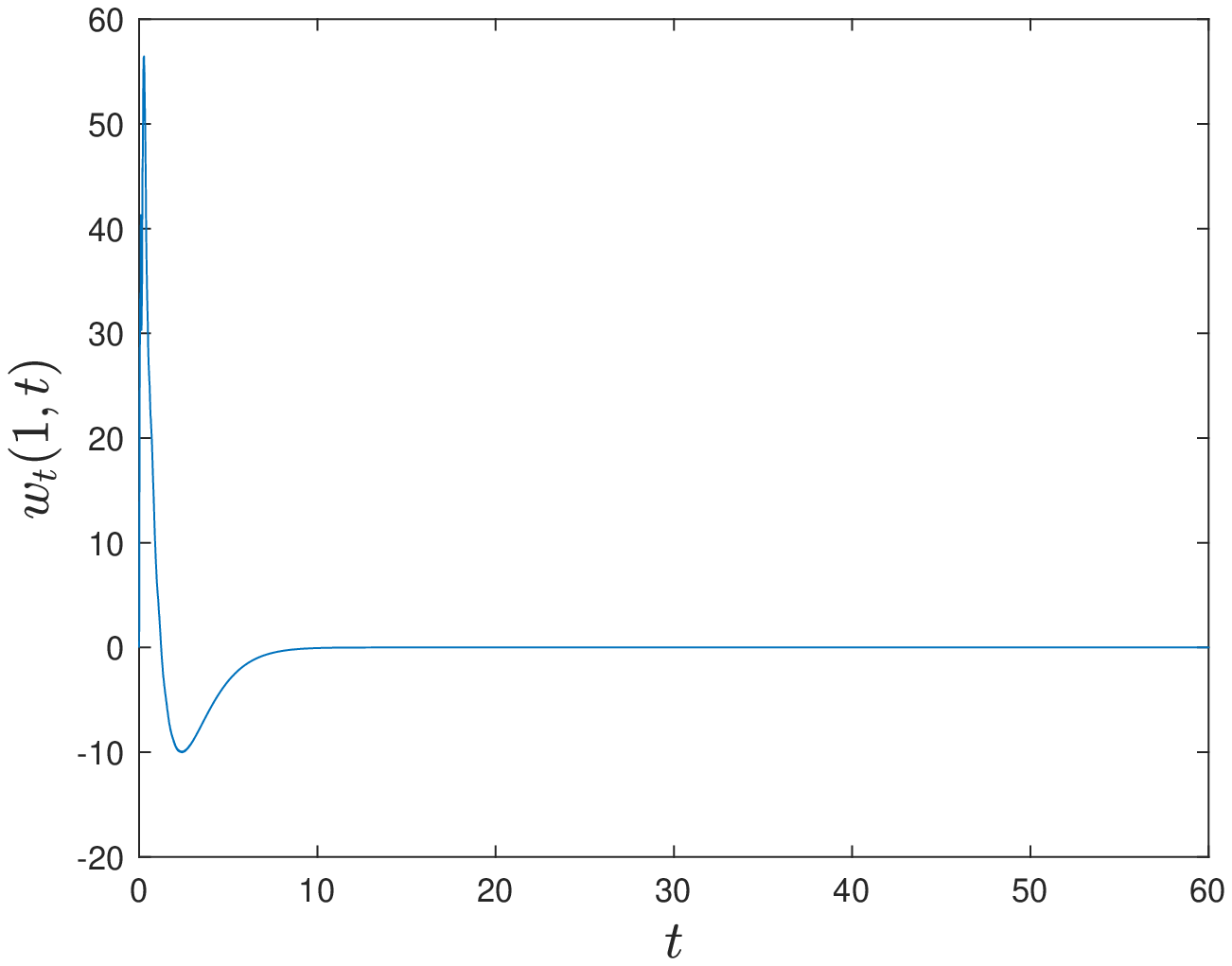}
\caption{$w_t(1,t)$.}
\label{10}
\end{minipage}
\end{figure}

%
%

%
%

\section{Concluding remarks}

In this paper, the dynamic stabilization for a flexible beam with tip mass and with or without disturbance is investigated.
In absence of disturbance, only one non-collocated measurement is used to construct a Luenberger state observer and estimate state based control law to exponentially stabilize the system. This essentially improves the results of the existence references \cite{Conrad1998,Guo2001b}, where two collocated measurements $w_t(1,t)$ and $w_{xxxt}(1,t)$ with $w_{xxxt}(1,t)$ being high order feedback were adopted.
When internal uncertainty and external disturbance are considered,
we construct an infinite-dimensional disturbance estimator to track the state and total disturbance in real time, and design an output feedback control law based on the estimated state and estimated total disturbance, in order
to exponentially stabilize the original system while guaranteeing the boundedness of the closed-loop system.
{\color{blue}Throughout the paper we utilizes many systems which share the same PDE but
different boundary conditions. As a result, when written in abstract setting, these PDEs are
represented by operators with different domains. For example, (\ref{beem}) with $F(t)=0$ and its observer (\ref{transfer11}) do not
have a common domain.}

\section{Appendix}
In this Appendix, we present the output feedback stabilization for system (\ref{beem}) without disturbance ($F(t)=0$), only one measurement $w_{xx}(0,t)$ is available. We design an infinite-dimensional Luenberger state observer described by
\begin{equation} \label{transfer11}
\left\{\begin{array}{l}
\widehat{w}_{tt}(x,t)+\widehat{w}_{xxxx}(x,t)=0, \\
\widehat{w}(0,t)=\widehat{w}_{xx}(1,t)=0, \\
\widehat{w}_{xx}(0,t)=c \widehat{w}_{xt}(0,t)+\gamma \widehat{w}_x(0,t)+w_{xx}(0,t),  \\
-\widehat{w}_{xxx}(1,t)+m\widehat{w}_{tt}(1,t)=u(t),
\end{array}\right.
\end{equation}
where $c,\; \gamma>0$ are tuning parameters.

Set $\widetilde{w}(x,t)=\widehat{w}(x,t)-w(x,t)$ to derive
\begin{equation} \label{wwan}
\left\{\begin{array}{l}
\widetilde{w}_{tt}(x,t)+\widetilde{w}_{xxxx}(x,t)=0,\\
\widetilde{w}(0,t)=\widetilde{w}_{xx}(1,t)=0,\\
\widetilde{w}_{xx}(0,t)=c\widetilde{w}_{xt}(0,t)+\gamma \widetilde{w}_x(0,t), \\
-\widetilde{w}_{xxx}(1,t)+m\widetilde{w}_{tt}(1,t)=0,
\end{array}\right.
\end{equation}
which is written abstractly by
\begin{align}\label{wwansemigroup}
    \frac{d}{dt}(\widetilde{w}(\cdot,t),\widetilde{w}_t(\cdot,t),m\widetilde{w}_t(1,t))=\mathbf{A}_2(\widetilde{w}(\cdot,t),
    \widetilde{w}_t(\cdot,t),m\widetilde{w}_t(1,t)).
\end{align}

\begin{lemma}\label{A2stable}
There exist a sequence of generalized eigenfunctions of operator {\color{blue}$\mathbf{A}_2$} that forms Riesz basis for $\mathbf{H}_2$.
Moreover, {\color{blue}$\mathbf{A}_2$} generates an exponentially stable $C_0$-semigroup on $\mathbf{H}_2$.
\end{lemma}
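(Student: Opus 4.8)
The plan is to follow, with the modifications forced by the dynamic tip-mass boundary condition at $x=1$, the Riesz-basis strategy already used for $\mathbb{A}$ in Lemma~\ref{Riesz}. First I would check that $\mathbf{A}_2$ is densely defined with compact resolvent: solving $\mathbf{A}_2(f,g,\eta)=(h_1,h_2,h_3)$ explicitly (as in the argument for $J_1$ inside the proof of Lemma~\ref{Riesz}) shows $0\in\rho(\mathbf{A}_2)$ and that $\mathbf{A}_2^{-1}$ is compact by Sobolev embedding, so $\mathbf{A}_2$ is a discrete operator and $\sigma(\mathbf{A}_2)$ consists of isolated eigenvalues of finite multiplicity. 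For $\lambda=i\tau^{2}\in\sigma(\mathbf{A}_2)$ the eigenvector is $(f,\lambda f,m\lambda f(1))$, where $f^{(4)}-\tau^{4}f=0$, $f(0)=0$, $f''(0)=(ic\tau^{2}+\gamma)f'(0)$, $f''(1)=0$, $f'''(1)=-m\tau^{4}f(1)$; writing $f=c_1e^{\tau x}+c_2e^{-\tau x}+c_3\cos\tau x+c_4\sin\tau x$ and imposing the four conditions gives a $4\times4$ homogeneous linear system whose determinant, set to zero, is the characteristic equation --- structurally the equation $(\ref{tauchar2})$ for $\mathbb{A}$ plus an extra group of terms carrying the factor $m\tau^{4}$ from the tip mass. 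Each eigenvalue is geometrically simple, since $f$ is then determined up to a scalar.

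Next I would carry out the asymptotic analysis of the roots. For $\mathrm{Im}\,\tau$ bounded and $\mathrm{Re}\,\tau\to+\infty$, replacing $\cosh\tau,\sinh\tau$ by $\tfrac12 e^{\tau}$ and keeping the dominant balance reduces the characteristic equation to a relation still forcing $\cos\tau-\sin\tau=O(|\tau|^{-1})$, hence $\sin(2\tau)=1+O(|\tau|^{-1})$; Rouch\'e's theorem then gives $\tau_n=p\pi+O(n^{-1})$ with $p=n+1/4$, and refining one more order yields $\tau_n=p\pi+\tfrac{1}{2mp\pi}-\tfrac{1}{icp\pi}+O(n^{-2})$ and $\lambda_n=-2/c+i\big[(p\pi)^2+1/m\big]+O(n^{-1})$ --- the tip mass merely shifts the imaginary part. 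Evaluating the normalized eigenvector $F_n=\tfrac{2}{\tau_n^{2}e^{\tau_n}}\big(f_n''(x),\,\lambda_nf_n(x),\,\gamma f_n'(0),\,m\lambda_nf_n(1)\big)$ along these roots, the first three components have exactly the limiting profile of $(\ref{fxequasy30})$, while the fourth (tip-mass) component is $O(n^{-1})$, because $f_n(1)$ vanishes to leading order, the surviving cosine and sine terms cancelling at $\tau_n\approx p\pi$. This cancellation is the delicate point and must be checked carefully.

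For the location of the spectrum I would avoid the case analysis used in the proof of Lemma~\ref{Riesz} and instead use a single energy identity: multiplying $f^{(4)}+\lambda^{2}f=0$ by $\overline f$, integrating by parts over $(0,1)$, and inserting $f'''(1)=m\lambda^{2}f(1)$ and $f''(0)=(c\lambda+\gamma)f'(0)$ gives
\[
\Big(m|f(1)|^{2}+\int_0^1|f(x)|^{2}dx\Big)\lambda^{2}+c|f'(0)|^{2}\,\lambda+\Big(\gamma|f'(0)|^{2}+\int_0^1|f''(x)|^{2}dx\Big)=0 ,
\]
a quadratic $a\lambda^{2}+cb\lambda+d=0$ with $a,d>0$ and $b=|f'(0)|^{2}\ge0$ whenever $f\not\equiv0$. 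If $b>0$, Routh--Hurwitz gives $\mathrm{Re}\,\lambda<0$; the degenerate case $f'(0)=0$ additionally forces $f''(0)=0$, hence $f=2c_1(\sinh\tau x-\sin\tau x)$ with $\sinh\tau+\sin\tau=0$, and one checks that the only such $f$ with $\tau^{2}\in\mathds{R}$ is $f\equiv0$. Together with $0\in\rho(\mathbf{A}_2)$ this yields $\mathrm{Re}\,\lambda<0$ for every $\lambda\in\sigma(\mathbf{A}_2)$.

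Finally, I would compare $\mathbf{A}_2$ with the skew-adjoint reference operator $J_2$ obtained by formally setting $1/c=\gamma=0$ --- so that the boundary feedback at $x=0$ degenerates into the conservative condition $\widehat{w}_{xt}(0,t)=0$ --- while keeping the tip-mass dynamics at $x=1$, so that $J_2$ acts on the same space $\mathbf{H}_2$; then $J_2$ has compact resolvent and hence a complete orthonormal set of normalized eigenfunctions in $\mathbf{H}_2$, whose asymptotic profiles agree with those of $\mathbf{A}_2$ up to $O(n^{-1})$ by the computation above. Using the isometric isomorphism $\mathbb{T}(f,g,\eta)=(f'',\,g,\,\sqrt{\gamma}\,f'(0),\,\eta/\sqrt m)$ of $\mathbf{H}_2$ onto $L^2(0,1)\times L^2(0,1)\times\mathds{C}\times\mathds{C}$, the estimate $\sum_{n>N}\|F_n^{\mathbf{A}_2}-F_n^{J_2}\|^{2}=\sum_{n>N}O(n^{-2})<\infty$ (and likewise for the conjugate family), together with discreteness of $\mathbf{A}_2$ and Theorem~1 of \cite{Guo2001}, shows that the generalized eigenfunctions of $\mathbf{A}_2$ form a Riesz basis for $\mathbf{H}_2$. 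This gives $C_0$-semigroup generation and the spectrum-determined growth condition; combined with $\sup\{\mathrm{Re}\,\lambda:\lambda\in\sigma(\mathbf{A}_2)\}<0$ from the previous step (the eigenvalues being bounded away from the imaginary axis since $\mathrm{Re}\,\lambda_n\to-2/c$), it follows that $\mathbf{A}_2$ generates an exponentially stable $C_0$-semigroup on $\mathbf{H}_2$. The main obstacle I expect is precisely the bookkeeping for the tip-mass term: it enters the characteristic equation with a factor $\tau^{4}$, larger than the $c\tau^{2}$ damping term, so one must verify carefully --- via the cancellation in $f_n(1)$ --- that it neither spoils the leading asymptotics, nor produces eigenvalue branches near or to the right of the imaginary axis, and contributes only an $O(n^{-1})$ perturbation to the normalized eigenfunctions, so that $J_2$ (and not the no-tip-mass operator $\mathbb{A}$) is the correct reference operator.
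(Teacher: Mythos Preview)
Your proposal is correct and follows essentially the same architecture as the paper's proof: discreteness of $\mathbf{A}_2$ via explicit inversion and Sobolev embedding, the eigenvalue problem and characteristic equation, the asymptotics $\tau_n=p\pi+O(n^{-1})$ and $\lambda_n=-2/c+i[(p\pi)^2+1/m]+O(n^{-1})$, the energy identity for $\mathrm{Re}\,\lambda<0$, comparison with the skew-adjoint reference operator obtained by setting $1/c=\gamma=0$ while keeping the tip mass (the paper calls it $J_0$, you call it $J_2$), and finally Theorem~1 of \cite{Guo2001}. Your Routh--Hurwitz repackaging of the energy identity is a cosmetic variant of the paper's real/imaginary-part case split, and your identification of the tip-mass cancellation in $f_n(1)$ as the delicate point is precisely what the paper verifies in its Step~3.
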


\begin{proof}\ \
We divide the proof into several steps.

{\bf Step 1.}
We show that operator {\color{blue}$\mathbf{A}_2$} is a densely defined and discrete operator on $\mathbf{H}_2$.
Indeed, obviously it is densely defined. Let $(f,g,\eta)\in \mathbf{H}_2$. We solve the equation $\mathbf{A}_2(\phi,\psi,l)=(f,g,\eta)$ to get
$$\left\{
  \begin{array}{ll}
    \psi(x)=f(x), x\in (0,1),\\
    \phi(x)=\frac{x^3}{6}[\eta+\int_0^1g(\theta)d\theta]-\frac{x^2}{2}[\eta+\int_0^1\theta g(\theta)d\theta]\\
    -\frac{x}{\gamma}[\eta+cf'(0)+\int_0^1\theta g(\theta)d\theta]-\int_0^x\frac{(x-\theta)^3}{6}g(\theta)d\theta,\\
    l=mf(1).
  \end{array}
\right.$$
This implies that $\mathbf{A}_2^{-1}$ is well defined on $\mathbf{H}_2$ and it maps $\mathbf{H}_2$ into $H^4(0,1)\times H^2(0,1)\times
\mathds{C}$. Since by Sobolev imbedding theory $H^4(0,1)\times H^2(0,1)\times \mathds{C}$ is compactly imbedding in $H^2(0,1)\times L^2(0,1)\times \mathds{C}$, $\mathbf{A}_2^{-1}$ is compact and {\color{blue}$\mathbf{A}_2$} a discrete operator.

{\bf Step 2:} We prove that for any $\lambda=i\tau^2\in\sigma(\mathbf{A}_2)$, there corresponds one eigenfunction of the form
$(f,\lambda f,m\lambda f(1))$, where $f$ is given by
 \begin{equation}\label{fxequ300}
  \begin{split}
     &f(x)=(\sinh\tau+\sin\tau)(\cosh\tau x-\cos\tau x)+\bigg[\frac{2\tau\sin\tau}{ic\tau^2+\gamma}\\
     &-(\cosh\tau+\cos\tau)\bigg]\sinh\tau x+\bigg[(\cosh\tau+\cos\tau)+\frac{2\tau\sinh\tau}{ic\tau^2+\gamma}\bigg]\sin\tau x,
  \end{split}
  \end{equation}
and $\tau$ satisfies the characteristic equation
  \begin{align}\label{tauchar200}
  \nonumber&2m\tau^2\sinh\tau\sin\tau+\tau(\cosh\tau\sin\tau-\sinh\tau\cos\tau)\\
&=(ic\tau^2+\gamma)[1+\cosh\tau\cos\tau-m\tau(\cosh\tau\sin\tau-\sinh\tau\cos\tau)].
  \end{align}
This implies that each eigenvalue of $\mathbf{A}_2$ is geometrically simple.

Indeed, $f$ satisfies
\begin{equation}
  \begin{cases}
    f^{(4)}(x)-\tau^4f(x)=0,\\
    f(0)=f''(1)=0,\\
    f^{''}(0)=(ic\tau^2+\gamma)f'(0),\ f^{'''}(1)=-m\tau^4f(1).
  \end{cases}
\end{equation}
The solution of ode $f^{(4)}(x)-\tau^4f(x)=0$ with boundary condition $f(0)=0$ is of the form
\begin{equation}\label{fxc1c2c3c4}
  f(x)=a_1(\cosh\tau x-\cos\tau x)+a_2\sinh\tau x+a_3\sin\tau x.
\end{equation}
By the boundary condition $f''(1)=0,  f^{''}(0)=(ic\tau^2+\gamma)f'(0)$, we can derive (\ref{fxequ300}). The equation (\ref{tauchar200}) is derived by the boundary condition $f^{'''}(1)=-m\tau^4f(1).$

{\bf Step 3:} We show that the eigenvalues $\{\lambda_n,\overline{\lambda_n}\}$, $\lambda_n=i\tau_n^2$ of $\mathbf{A}_2$ have
asymptotic expression
\begin{equation}\label{ataunasy100}
    \tau_n=p\pi+O(n^{-1}),\; \lambda_n=-\frac{2}{c}+i\left[(p\pi)^2+\frac{1}{m}\right]+O(n^{-1}),
\end{equation}
where $n$ is a large positive integer and $p=n+1/4$;
the corresponding eigenfunction $(f_n,\lambda_nf_n,$
$m\lambda_nf(1))$ of $\mathbf{A}_2$ can be chosen by
 \begin{align}\label{fxequasy300}
     F_n(x)=\left(
              \begin{array}{c}
                e^{-p\pi x}+ \cos p\pi x-\sin p\pi x \\
                i[e^{-p\pi x}-\cos p\pi x+\sin p\pi x)] \\
0\\
0\\
              \end{array}
            \right)+O(n^{-1}),
  \end{align}
 where
  \begin{equation}\label{fmatrix100}
    F_n(x)=\frac{2}{\tau_n^2e^{\tau_n}}\begin{pmatrix}
      f_{n}^{''}(x) \\
      \lambda_nf_n(x)\\
      \gamma f'_n(0)\\
      m\lambda f_n(1)\\
    \end{pmatrix}^T,\; \lim_{n\rightarrow\infty}\|F_n\|^2_{\left(L^2(0,1)\right)^2\times \mathds{C}^2}=2.
  \end{equation}

In fact, when ${\rm Im}(\tau)$ is bounded and ${\rm Re}(\tau)\rightarrow\infty$, by (\ref{tauchar200}) we derive the following equality
\begin{align}\label{ruxie0}
       \nonumber &\sin\tau-\cos\tau=\\
       \nonumber&\frac{[\tau\sinh\tau+(ic\tau^2+\gamma)(\cosh\tau+m\tau\sinh\tau)]\cos\tau+ic\tau^2+\gamma}
{2m\tau^2\sinh\tau+\tau\cosh\tau+(ic\tau^2+\gamma)m\tau\cosh\tau}\\
&-\cos\tau=\frac{ic-2m}{icm\tau}\cos\tau+O(|\tau|^{-2}) =O(|\tau|^{-1}).
\end{align}
{\color{blue}Square both sides of the equation (\ref{ruxie0}) to derive $1-2\sin\tau\cos\tau =O(|\tau|^{-2})$}, which implies $\sin(2\tau)=1+O(|\tau|^{-1})$.
{\color{blue}Using Rouche's theorem to derive that equation (\ref{ruxie0}) has solutions satisfy that $\tau_n=(n+\frac{1}{4})\pi+O(n^{-1})=p\pi+O(n^{-1})$, as $n$ large enough.
Combine this,(\ref{ruxie0}), and the Taylor expansions of $\sin\tau$ and $\cos\tau$,} to get $\tau_n=-\frac{2}{c}+i\left[(p\pi)^2+\frac{1}{m}\right]+O(n^{-1})$. Hence (\ref{ataunasy100}) is derived.

{\color{blue}By (\ref{ataunasy100}) and the Taylor expansions of $\sin\tau$ and $\cos\tau$,}  for $y\geq0$ and $0\leq x\leq1$, we have
 \begin{equation}\label{ataunxmx00}
   \begin{cases}
     e^{-\tau_ny}=e^{-p\pi y}+O(n^{-1}),\\
     \sin\tau_nx=\sin p\pi x+O(n^{-1}),\\
     \cos\tau_nx=\cos p\pi x+O(n^{-1}).
   \end{cases}
 \end{equation}
 Obviously, $f_n(x)=f(x)$ is defined by (\ref{fxequ300}) with $\tau=\tau_n$. By (\ref{ataunxmx00}), it follows that
  \begin{align}
   \nonumber &\frac{2}{\tau^2e^\tau}f^{''}(x)=\frac{2}{e^\tau}\bigg\{(\sinh\tau+\sin\tau)(\cosh\tau x+\cos\tau x)+\bigg[\frac{2\tau\sin\tau}{ic\tau^2+\gamma}\\
   \nonumber  &-(\cosh\tau+\cos\tau)]\sinh\tau x-\bigg[(\cosh\tau+\cos\tau)+\frac{2\tau\sinh\tau}{ic\tau^2+\gamma}\bigg]\sin\tau x\bigg\}\\
    &=e^{-p\pi x}+\cos p\pi x-\sin p\pi x+O(n^{-1}),
    \end{align}
  \begin{align}
     \nonumber  &\frac{2}{\tau^2e^\tau}(\lambda f(x))=\frac{2i}{e^\tau}\bigg\{(\sinh\tau+\sin\tau)(\cosh\tau x+\cos\tau x)+\bigg[\frac{2\tau\sin\tau}{ic\tau^2+\gamma}\\
    \nonumber &-(\cosh\tau+\cos\tau)\bigg]\sinh\tau x-\bigg[(\cosh\tau+\cos\tau)+\frac{2\tau\sinh\tau}{ic\tau^2+\gamma}\bigg]\sin\tau x\bigg\}\\
      &=i[e^{-p\pi x}-\cos p\pi x+\sin p\pi x]+O(n^{-1}),
\end{align}
\begin{align}
       \frac{2}{\tau^2e^\tau}\gamma f'(0)=\frac{2}{\tau^2e^\tau}\gamma\frac{2\tau^2(\sinh\tau+\sinh\tau)}{ic\tau^2+\gamma}=O(n^{-1}),
\end{align}
and
\begin{align}
       \nonumber&\frac{2}{\tau^2e^\tau}m\lambda f(1)=\frac{2}{\tau^2e^\tau}\lambda^{-1} f'''(1)\\
      \nonumber &=\frac{2\tau^3}{i\tau^4e^\tau}(\sinh\tau+\sin\tau)(\sinh\tau -\sin\tau)+\frac{2\tau^3}{i\tau^4e^\tau}\bigg[\frac{2\tau\sin\tau}{ic\tau^2+\gamma}\\
    \nonumber &-(\cosh\tau+\cos\tau)\bigg]\cosh\tau-\frac{2\tau^3}{i\tau^4e^\tau}\bigg[(\cosh\tau+\cos\tau)\\
     &+\frac{2\tau\sinh\tau}{ic\tau^2+\gamma}\bigg]\cos\tau=O(n^{-1}).
\end{align}
Accordingly, (\ref{fxequasy300}) and (\ref{fmatrix100}) are obtained.

{\bf Step 4:} We verify that for any $\lambda\in \sigma(\mathbf{A}_2)$, ${\rm Re}\lambda<0$.
To this end, we observe that by step 1, $0\notin \sigma(\mathbf{A}_2)$. Let $0\neq\lambda\in \sigma(\mathbf{A}_2)$ and $(f,\lambda f,m\lambda f(1))$ be the corresponding eigenfunction, we obtain
$\lambda^2f+f^{(4)}=0$ with the boundary conditions $f(0)=f''(1)=0, f''(0)=(c\lambda+\gamma)f'(0),\ f^{'''}(1)=m\lambda^2f(1).$

Multiply the ODE with $\overline{f}$ and integral over $(0,1)$ on both sides to derive
\begin{align}\label{IntODE}
  \lambda^2\int_0^1|f(x)|^2dx+\int_0^1f^{(4)}(x)\overline{f(x)}dx=0.
\end{align}
Use partial integrate and the boundary conditions to get
 \begin{align}\label{IntODEJian}
 \lambda^2\int_0^1|f(x)|^2dx+\int_0^1|f''(x)|^2dx+m\lambda^2|f(1)|^2+(c\lambda+\gamma)|f'(0)|^2=0.
\end{align}

If $\lambda$ is real number, then $\lambda<0$. Indeed, for $\lambda>0$, (\ref{IntODEJian}) indicates $f=0$,
which contradicts with the assumption that $(f,\lambda f,m\lambda f(1))$ is eigenfunction.

If $\lambda$ is not real number, then $\lambda=\lambda_1+i\lambda_2$ with $\lambda_1$ and $\lambda_2\neq 0$ being two real numbers.
We take the imaginary part of (\ref{IntODEJian}) to derive
\begin{align}\label{Imag}
  \lambda_2\bigg[2\lambda_1\int_0^1|f(x)|^2dx+2m\lambda_1|f(1)|^2+c|f'(0)|^2\bigg]=0.
\end{align}
Since $\lambda_2\neq 0$, (\ref{Imag}) indicates $\lambda_1\leq 0$. For $\lambda_1=0$, we derive $f'(0)=0$. {\color{blue}Then
 we use the boundary condition to derive $f''(0)=(c\lambda+\gamma)f'(0)=0$.} Denote $\lambda_2=\tau^2$ or
$\lambda_2=-\tau^2$, where $\tau>0$. The ODE becomes $f^{(4)}-\tau^4 f=0$ with boundary conditions $f(0)=f'(0)=f''(0)=f''(1)=0$ and $f'''(1)=-m\tau^4 f(1)$.
The boundary condition $f(0)=f'(0)=f''(0)=0$ indicates that the solution of ODE is $f(x)=B\big(\sinh(\tau x)-\sin(\tau x)\big)$, where $B$ is a constant to be determined. Since $(f,\lambda f,\lambda m f(1))$ is eigenfunction, $B\neq 0$.
Use the boundary conditions $f''(1)=0$ to get $\sinh\tau+\sin\tau=0$.
Observe that $g(t)=\sinh(t)+\sin(t)$ is a strictly monotone increasing function on $(-\infty, \infty)$ and $g(0)=0$.
Then $\tau=0$. This contradicts with $\lambda_2\neq 0$. Therefore $\lambda_2<0$.

{\bf Step 5:} By simple calculation, we obtain that the operator $J_0: D(J_0)(\subset\mathbf{H}_2)\rightarrow \mathbf{H}_2$
{\color{blue}\begin{align*}
    \left\{
      \begin{array}{ll}
        J_0(f,g,\eta)=(g(x),-f^{(4)},f'''(1)), \;\forall (f,g,\eta)\in D(J_0),\\
        D(J_0)=\{(f,g,\eta)\in(H^4(0,1)\cap H_L^2(0,1))\times H_L^2(0,1)\\
\times\mathds{R}|f''(0)=g'(0)=0,\eta=mg(1)\}
      \end{array}
    \right.
\end{align*}
}
of the system
\begin{equation}\label{fangonge}
  \begin{cases}
  w_{tt}(x,t)+w_{xxxx}(x,t)=0,\ 0<x<1,\\
  w(0,t)=w_{xx}(1,t)=w_{xt}(0)=0,\\
  -w_{xxx}(1,t)+mw_{tt}(1,t)=0.
\end{cases}
\end{equation}
is skew-adjoint and have compact resolvent. Moreover, we can obtain that $1\in\rho(J_0)$ and $(I-J_0)^{-1}$ is compact on
$\mathbf{H}_2$. Hence $J_0$ a skew-adjoint and discrete operator, and it has a sequence of normalized eigenfunctions
that forms an orthonormal basis for $\mathbf{H}_2$. By putting $\gamma=c^{-1}=0$ in Step 3, one derives
that all the eigenvalues  $\{\mu_n,\overline{\mu_n}\}$, $\mu_n=i\omega_n^2$ are algebraically simple and have asymptotic expression
\begin{equation*}
  \omega_n=p\pi+O(e^{-n}), p=n+1/4;
\end{equation*}
and the eigenfunction have asymptotic expression
\begin{align}\label{aphixasy00}
   G_n(x)=\left(
              \begin{array}{c}
                e^{-p\pi x}+\cos p\pi x-\sin p\pi x \\
                i[e^{-p\pi x}-\cos p\pi x+\sin p\pi x] \\
                0\\
                0\\
              \end{array}
            \right)+O(n^{-1}),
  \end{align}
 where
  \begin{equation}
G_n(x)=\frac{2}{\omega_n^2e^{\omega_n}}\begin{pmatrix}
      \phi_{n}''(x) \\
      \mu_n\phi_n(x)\\
      0\\
      m\mu_n\phi_n(1)\\
    \end{pmatrix}^\top.
  \end{equation}
Here $(\phi_n,\mu_n\phi_n,0)$ is the eigenfunction of $J_0$ corresponding to the eigenvalue $\mu_n$.

{\bf Step 6:} We prove that there exist a sequence of generalized eigenfunctions of $\mathbf{A}_2$ which forms Riesz basis for $\mathbf{H}_2$.
To this end, {\color{blue}similar to Guo, Wang and Yang \cite{Guo2008}, we define an isometric isomorphism $\mathbf{T}:\mathbf{H}_2\rightarrow L^2(0,1)\times L^2(0,1)\times \mathds{R}^2$
by $\mathbf{T}(f,g,h)=(f'',g,\gamma f'(0),h),(f,g,h)\in \mathbf{H}_2$.}
Then, by (\ref{fxequasy300}) and (\ref{aphixasy00}), there exists $N>0$ such that
\begin{equation}
\begin{split}
  &\sum_{n>N}^{\infty}\bigg\|\frac{2}{\tau_n^2e^{\tau_n}}(f_n,\lambda_nf_n,m\lambda f_n(1))
-\frac{2}{\omega_n^2e^{\omega_n}}(\phi_n,\mu_n\phi_n,m\mu \phi_n(1))\bigg\|^2_{\mathbf{H}_2}\\
&{\color{blue}=\sum_{n>N}^{\infty}\bigg\|\frac{2}{\tau_n^2e^{\tau_n}}\mathbf{T}(f_n,\lambda_nf_n,m\lambda f_n(1))
-\frac{2}{\omega_n^2e^{\omega_n}}\mathbf{T}(\phi_n,\mu_n\phi_n,m\mu \phi_n(1))\bigg\|^2_{(L^2(0,1))^2\times \mathbb{C}^2}}\\
  &=\sum_{n>N}^{\infty}\|F_n-G_n\|^2_{(L^2(0,1))^2\times \mathbb{C}^2}=\sum_{n>N}^{\infty}O(n^{-2})<\infty.
\end{split}
\end{equation}
The same thing is true for conjugates. By step 1, $\mathbf{A}_2$ is a densely defined and discrete operator. Therefore $\mathbf{A}_2$ satisfies all the conditions of Theorem 1 of \cite{Guo2001}. Hence the generalized eigenfunctions of $\mathbf{A}_2$ forms Riesz basis for $\mathbf{H}_2$.

{\bf Step 7:} We show that $\mathbf{A}_2$ generates an exponentially stable $C_0$-semigroup on $\mathbf{H}_2$.
Indeed, the Riesz basis generation in step 6 indicates that $\mathbf{A}_2$ generates a $C_0$-semigroup and
the spectrum determine growth condition holds. Then the exponential stability is obtained from step 4 and
the asymptotic expression (\ref{ataunasy100}).
This completes the proof.
\end{proof}

We design an estimated state based output feedback control law
\begin{align}\label{control}
 u(t)=-\alpha\widehat{w}_t(1,t)+\beta \widehat{w}_{xxxt}(1,t)
\end{align}
 to derive the closed-loop system
\begin{align}\label{closednodisturbance}
    \left\{\begin{array}{l}
w_{tt}(x,t)+w_{xxxx}(x,t)=0, \\
w(0,t)=w_x(0,t)=w_{xx}(1,t)=0,\\
-w_{xxx}(1,t)+mw_{tt}(1,t)=-\alpha\widehat{w}_t
+\beta \widehat{w}_{xxxt}(1,t),\\
\widehat{w}_{tt}(x,t)+\widehat{w}_{xxxx}(x,t)=0, \\
\widehat{w}(0,t)=\widehat{w}_{xx}(1,t)=0,\\
\widehat{w}_{xx}(0,t)=c \widehat{w}_{xt}(0,t)+\gamma \widehat{w}_x(0,t)
+w_{xx}(0,t),  \\
-\widehat{w}_{xxx}(1,t)+m\widehat{w}_{tt}(1,t)=-\alpha\widehat{w}_t
+\beta \widehat{w}_{xxxt}(1,t),
\end{array}\right.
\end{align}
which is equivalent to
\begin{align}\label{closednodisturbance1}
    \left\{\begin{array}{l}
w_{tt}(x,t)+w_{xxxx}(x,t)=0,\\
w(0,t)=w_x(0,t)=w_{xx}(1,t)=0, \\
-w_{xxx}(1,t)+mw_{tt}(1,t)=-\alpha w_t(1,t)+\beta w_{xxxt}(1,t)\\
-\alpha\widetilde{w}_t+\beta \widetilde{w}_{xxxt}(1,t), \\
\widetilde{w}_{tt}(x,t)+\widetilde{w}_{xxxx}(x,t)=0,\\
\widetilde{w}(0,t)=\widetilde{w}_{xx}(1,t)=0,\\
\widetilde{w}_{xx}(0,t)=c\widetilde{w}_{xt}(0,t)+\gamma \widetilde{w}_x(0,t), \\
-\widetilde{w}_{xxx}(1,t)+m\widetilde{w}_{tt}=0.
\end{array}\right.
\end{align}
Consider system (\ref{closednodisturbance1}) in Hilbert state space $\mathcal{H}=\mathbf{H}\times \mathbf{H}_2$, where $\mathbf{H}=H_E^2(0,1)\times L^2(0,1)\times\mathds{C}$. The norm is given by
$\|(f,g,\eta)\|^2_{\mathbf{H}} = \int_0^1[|f''(x)|^2+|g(x)|^2]dx+\frac{\beta^2}{m+\alpha\beta} |\eta|^2,$
$(f,g)\in \mathbf{H}.$
Define the operator $\mathbf{A}:D(\mathbf{A})(\subset \mathbf{H})\rightarrow \mathbf{H}$ by
$\mathbf{A}(f,g,\eta)=(g,-f^{(4)},-\eta\beta^{-1}-\beta^{-1}(\alpha
    -m\beta^{-1})g(1)), $
    $   \forall\;{\color{blue}(f,g,\eta)}\in D(\mathbf{A}),$
$\color{blue} D(\mathbf{A})=\{(f,g)\in (H^4(0,1)\bigcap H_E^2(0,1))\times H_E^2(0,1)| f''(1)=0,\eta=-f'''(1)
   +m\beta^{-1}g(1)\}.$
By \cite{Conrad1998,Guo2001b}, it follows that $\mathbf{A}$ generates an exponentially stable $C_0$-semigroup, and there exist a sequence of generalized eigenfunctions of $\mathbf{A}$ that forms Riesz basis for $\mathbf{H}$.
Furthermore, we define the operator $\mathcal{A}:D(\mathcal{A})(\subset \mathcal{H})\rightarrow \mathcal{H}$ by
$\mathcal{A}(f,g,\eta,\phi,\psi,h)=(g,-f^{(4)},-\eta\beta^{-1}-\beta^{-1}(\alpha
    -m\beta^{-1})g(1)-\beta^{-1}\phi'''(1)
-\alpha\beta^{-1}\psi(1),\psi,
-\phi^{(4)},\phi'''(1)),\; (f,g,{\color{blue}\eta},\phi,\psi,h)\in D(\mathcal{A}),$
    $
    D(\mathcal{A})=\{(f,g,\eta,\phi,\psi,h)\in (H_E^2(0,1)\bigcap H^4(0,1))
    \times H_E^2(0,1)\times D(\mathbf{A}_2),
    f''(1)=0,\eta=-f'''(1)
    +m\beta^{-1}g(1)-\phi'''(1), \},$
where $\mathcal{H}=\mathbf{H}\times \mathbf{H}_2$.

System (\ref{closednodisturbance1}) is abstractly described by
$\frac{d}{dt}Z(t)=\mathcal{A}Z(t),$
where $Z(t)=\big(w(\cdot,t),w_t(\cdot,t),$
$-w_{xxx}(1,t)-\widetilde{w}_{xxx}(1,t)$
$+m\beta^{-1}w_t(1,t),\widetilde{w}(\cdot,t),\widetilde{w}_t(\cdot,t),m\widetilde{w}(1,t)\big).$
{\color{blue}In the current inner product one cannot directly show that the operator $\mathcal{A}$ is dissipative.}
The multiplier method is not effective to verify the stability of the semigroup $e^{\mathcal{A}t}$;
it seems difficult to find an equivalent inner product in $\mathcal{H}$ to make $\mathcal{A}$ dissipative.
Instead, we shall use Riesz basis approach to verify the stability. However, because of the complexity of beam equation, it
is not so easy to prove the Riesz basis generation of couple beam equations \cite{Guo2008,Guo2019}.
In the following theorem, we shall apply Bari's theorem to the verification of Riesz basis generation by finding out complicated
relations between sequences of generalized eigenfunctions.

\begin{theorem}\label{exponentialnodisturbance}
The operator $\mathcal{A}$ generates an exponentially stable $C_0$-semigroup on $\mathcal{H}$:
there exist two positive constants $M_\mathcal{A}$ and $\omega_\mathcal{A}$ such that
\begin{align*}
  \|e^{\mathcal{A}t}\|_\mathcal{H}\leq M_\mathcal{A}e^{-\omega_\mathcal{A} t},\;\; t\geq 0.
\end{align*} Moreover,
for any initial condition
$\big(w(\cdot,0),w_t(\cdot,0),-\widehat{w}_{xxx}(1,0)+m\beta^{-1}w_t(1,0),\widehat{w}(\cdot,0),\widehat{w}_t(\cdot,0),$
$m(\widehat{w}_t(1,0)-w_t(1,0))\big)\in \mathcal{H}$, the state $\big(w(\cdot,t),w_t(\cdot,t),-\widehat{w}_{xxx}(1,t)+m\beta^{-1}w_t(1,t),\widehat{w}(\cdot,t),\widehat{w}_t(\cdot,t),$
$m(\widehat{w}_t(1,t)-w_t(1,t))\big)\in C((0,\infty),\mathcal{H}) $ of
(\ref{closednodisturbance1}) is exponentially stable in the sense that
$\|\big(w(\cdot,t),$
$w_t(\cdot,t),-\widehat{w}_{xxx}(1,t)+m\beta^{-1}w_t(1,t),\widehat{w}(\cdot,t),
\widehat{w}_t(\cdot,t),m(\widehat{w}_t(1,t)-w_t(1,t))\big)\|_{\mathcal{H}}\leq M_1e^{-{\color{blue}\omega_\mathcal{A}} t},$
where $M_1$ is a positive constant.
\end{theorem}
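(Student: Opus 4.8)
The plan is to exploit the block upper-triangular structure of the operator $\mathcal{A}$ on $\mathcal{H}=\mathbf{H}\times\mathbf{H}_2$ together with a Riesz basis argument, in the same spirit as the proof of Theorem \ref{exponential00}. In system (\ref{closednodisturbance1}) the $\widetilde w$-component evolves autonomously under $\mathbf{A}_2$, while the $w$-component is governed by $\mathbf{A}$ perturbed by the boundary coupling term $-\beta^{-1}\phi'''(1)-\alpha\beta^{-1}\psi(1)$ depending on the $\widetilde w$-part. The first step is to verify that $0\in\rho(\mathcal{A})$ and $\mathcal{A}^{-1}$ is compact, so that $\mathcal{A}$ is a discrete operator whose spectrum consists only of eigenvalues of finite algebraic multiplicity: one solves the resolvent equation componentwise, inverting the $\widetilde w$-block by the formula behind Lemma \ref{A2stable} and substituting it into the $w$-block, which shows $\mathcal{A}^{-1}$ maps $\mathcal{H}$ into a space compactly embedded in $\mathcal{H}$ by the Sobolev embedding theorem. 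Analyzing the eigenvalue problem $\mathcal{A}(f,g,\eta,\phi,\psi,h)=\lambda(f,g,\eta,\phi,\psi,h)$, if $\phi\equiv0$ then $(f,g,\eta)$ is an eigenfunction of $\mathbf{A}$ and $\lambda\in\sigma(\mathbf{A})$, while if $\phi\not\equiv0$ then $(\phi,\psi,h)$ is an eigenfunction of $\mathbf{A}_2$ and $\lambda\in\sigma(\mathbf{A}_2)$; conversely every such $\lambda$ lies in $\sigma(\mathcal{A})$. Hence $\sigma(\mathcal{A})=\sigma(\mathbf{A})\cup\sigma(\mathbf{A}_2)$.

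The heart of the proof is to show that a suitable sequence of generalized eigenfunctions of $\mathcal{A}$ forms a Riesz basis for $\mathcal{H}$. I would start from the two known Riesz bases: $\{E_n,\overline{E}_n\}$ for $\mathbf{H}$ (from \cite{Conrad1998,Guo2001b}) and $\{\Phi_n,\overline{\Phi}_n\}$ for $\mathbf{H}_2$ (Lemma \ref{A2stable}), so that $\{(E_n,0),(\overline{E}_n,0)\}\cup\{(0,\Phi_n),(0,\overline{\Phi}_n)\}$ is a Riesz basis for $\mathcal{H}$. For eigenvalues inherited from $\mathbf{A}$ the generalized eigenfunctions of $\mathcal{A}$ are exactly $(E_n,0)$. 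For eigenvalues inherited from $\mathbf{A}_2$, I would solve the inhomogeneous fourth-order boundary value problem for the $w$-component (driven by $\phi'''(1)$ and $\psi(1)$), write down its explicit solution, derive the relevant characteristic data, and then pass through the isometric isomorphism $\mathcal{H}\to(L^2(0,1))^2\times\mathds{C}\times(L^2(0,1))^2\times\mathds{C}^2$ of the type $\mathbf{T}(f,g,h)=(f'',g,\gamma f'(0),h)$ used in Step 6 of Lemma \ref{A2stable}, under which the eigenfunction asymptotics become clean leading-order trigonometric-exponential profiles plus $O(n^{-1})$ corrections. One then exhibits a finite invertible triangular matrix $Q$ (the analogue of the one appearing in relations (\ref{guanxi3})--(\ref{guanxi4})) such that the transformed generalized eigenfunctions of $\mathcal{A}$ differ from $Q$ applied to the decoupled basis by a sequence whose norms are $\sum_n O(n^{-2})<\infty$, with the analogous estimate for conjugates. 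Bari's theorem then yields that the generalized eigenfunctions of $\mathcal{A}$ form a Riesz basis for $\mathcal{H}$.

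Once the Riesz basis property is established, $\mathcal{A}$ generates a $C_0$-semigroup on $\mathcal{H}$ and the spectrum-determined growth condition holds. Since $\sigma(\mathcal{A})=\sigma(\mathbf{A})\cup\sigma(\mathbf{A}_2)$, and since both $\mathbf{A}$ and $\mathbf{A}_2$ generate exponentially stable semigroups with eigenvalue real parts bounded above by a negative constant (by Step 4 of Lemma \ref{A2stable} and the asymptotics $\mathrm{Re}\,\lambda_n\to-2/c$ there, together with the corresponding facts for $\mathbf{A}$ from \cite{Conrad1998,Guo2001b}), we conclude $\sup\{\mathrm{Re}\,\lambda:\lambda\in\sigma(\mathcal{A})\}<0$, hence $\|e^{\mathcal{A}t}\|_{\mathcal{H}}\le M_{\mathcal{A}}e^{-\omega_{\mathcal{A}}t}$ for suitable $M_{\mathcal{A}},\omega_{\mathcal{A}}>0$. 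Finally, for any initial datum in $\mathcal{H}$ the trajectory $Z(t)=e^{\mathcal{A}t}Z(0)$ is the (mild) solution of $\frac{d}{dt}Z=\mathcal{A}Z$, which gives $Z\in C((0,\infty),\mathcal{H})$ and the stated decay with $M_1=M_{\mathcal{A}}\|Z(0)\|_{\mathcal{H}}$, recovering the componentwise estimates in the theorem.

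I expect the main obstacle to be the Riesz basis step for the coupled operator $\mathcal{A}$: explicitly solving the inhomogeneous ODE governing the $w$-component of the $\mathbf{A}_2$-family of eigenfunctions, extracting the correct leading-order asymptotics after the isometric transformation, and identifying the invertible matrix $Q$ so that the quadratic-closeness hypothesis of Bari's theorem can be verified — the computation underlying relations of the type (\ref{guanxi3})--(\ref{guanxi4}). The bookkeeping of the $O(n^{-1})$ remainders and the fact that not every generalized eigenfunction is a genuine eigenfunction (Jordan chains absorbed into $Q$) are the delicate points; the remaining steps (compactness of $\mathcal{A}^{-1}$, identification of the spectrum, spectrum-determined growth, and the solution estimate) are routine once this is in place.
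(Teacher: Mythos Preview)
Your proposal is correct and follows essentially the same approach as the paper's own proof: establishing compactness of $\mathcal{A}^{-1}$, identifying $\sigma(\mathcal{A})=\sigma(\mathbf{A})\cup\sigma(\mathbf{A}_2)$, constructing the generalized eigenfunctions of $\mathcal{A}$ by solving the inhomogeneous $w$-ODE for the $\mathbf{A}_2$-family, passing through the isometric isomorphism to $(L^2)^2\times\mathds{C}\times(L^2)^2\times\mathds{C}^2$, exhibiting the invertible triangular matrix $Q$ that yields quadratic closeness to the decoupled basis (the paper's relations (\ref{guanxi1})--(\ref{guanxi2})), applying Bari's theorem, and then concluding via the spectrum-determined growth condition. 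Your identification of the Riesz-basis step and the construction of $Q$ as the main technical obstacle matches exactly what the paper does.
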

\begin{proof}\ \
It is routine to show that $\mathcal{A}^{-1}$ exists and is compact on $\mathcal{H}$, that is,
 $\mathcal{A}$ is a discrete operator and the spectrum consists of eigenvalues.

Now we shall show that $\sigma(\mathcal{A})=\sigma(\mathbf{A})\bigcup \sigma(\mathbf{A}_2)$.
Observe that by \cite{Guo2008} and Lemma \ref{Riesz}, $\mathbf{A}$ and $\mathbf{A}_2$ are also discrete operators, thereby, it is easily seen that $\sigma(\mathcal{A})\supseteq\sigma(\mathbf{A})\bigcup \sigma(\mathbf{A}_2).$
Let $\lambda\in \sigma(\mathcal{A})$ and $(f,g,\eta,\phi,\psi,h)$ be the corresponding eigenfunction.
If $(\phi,\psi,h)\neq 0$, $\lambda\in\sigma(\mathbf{A}_2)$; if $(\phi,\psi,h)=0$, we have that $(f,g,\eta)\neq 0$,
$(f,g,\eta)\in D(\mathcal{A})$, and $\lambda (f,g,\eta)=\mathcal{A}(f,g,\eta)$, which implies that $\lambda\in \sigma(\mathcal{A})$. Hence
{\color{blue}$\sigma(\mathcal{A})\subseteq\sigma(\mathbf{A})\bigcup \sigma(\mathbf{A}_2)$}. Therefore $\sigma(\mathcal{A})=\sigma(\mathbf{A})\bigcup \sigma(\mathbf{A}_2)$.

Next, we shall show that the generalized eigenfunction of $\mathcal{A}$ forms a Riesz basis for $\mathcal{H}$.
Let $\{\mu_{n},\overline{\mu}_{n}\}_{n=1}^\infty$ and $\{\lambda_n,\overline{\lambda}_{n}\}_{n=1}^\infty$ be respectively the eigenvalues of $\mathbf{A}$ and $\mathbf{A}_2$, where $\mu_{n}=i\omega_{n}^2,\lambda_{n}=i\tau_{n}^2$.
By \cite[(5)]{Guo2001b} and Lemma \ref{A2stable}, it follows that, the generalized eigenfunctions corresponding to $\{\mu_{n}\}_{n=1}^\infty$ and $\{\lambda_{n}\}_{n=1}^\infty$ can be respectively given by $\{(2\omega_{n}^{-2}e^{-\omega_{n}}f_{n},2ie^{-\omega_{n}}f_{n},-\frac{2i\beta^{-1}}{\mu_n+\beta^{-1}}e^{-\omega_{n}}(\alpha$
$-m\beta^{-1})f_{n}(1))\}_{n=1}^\infty$ and $\{(2\tau_{n}^{-2}e^{-\tau_{n}}\phi_{n},2ie^{-\tau_{n}}\phi_{n},
-2i\tau_{n}^{-4}$
$e^{-\tau_{n}}\phi'''_{n}(1)\}_{n=1}^\infty$ such that the sequences
 $\{(2\omega_{n}^{-2}e^{-\omega_{n}}f_{n},2ie^{-\omega_{n}}f_{n},-\frac{2i\beta^{-1}}{\mu_n+\beta^{-1}}e^{-\omega_{n}}(\alpha-m\beta^{-1})f_{n}(1))\}_{n=1}^\infty\bigcup \{(\overline{2\omega_{n}^{-2}e^{-\omega_{n}}f_{n}},\overline{2ie^{-\omega_{n}}f_{n}},-\overline{\frac{2i\beta^{-1}}{\mu_n+\beta^{-1}}}$
 $\overline{e^{-\omega_{n}}(\alpha
-m\beta^{-1})f_{n}(1)})\}_{n=1}^\infty $ and
$\{(2\tau_{n}^{-2}e^{-\tau_{n}}\phi_{n},2ie^{-\tau_{n}}\phi_{n},
-2i\tau_{n}^{-4}e^{-\tau_{n}}\phi'''_{n}(1)$
$)\}_{n=1}^\infty\bigcup
\{(\overline{2\tau_{n}^{-2}e^{-\tau_{n}}\phi_{n}},$
$\overline{2ie^{-\tau_{n}}\phi_{n}},
\overline{-2i\tau_{n}^{-4}e^{-\tau_{n}}\phi'''_{n}(1))}\}_{n=1}^\infty$
form Riesz basises for $\mathbf{H}$ and $ \mathbf{H}_2$, respectively.
As a result, $\{(2\omega_{n}^{-2}e^{-\omega_{n}}f_{n},
2ie^{-\omega_{n}}f_{n},-\frac{2i\beta^{-1}}{\mu_n+\beta^{-1}}e^{-\omega_{n}}(\alpha-m\beta^{-1})f_{n}(1),0,0,0)\}_{n=1}^\infty
\bigcup \{(0,0,0,2\tau_{n}^{-2}e^{-\tau_{n}}\phi_{n},2ie^{-\tau_{n}}$
$\phi_{n},-2i\tau_{n}^{-4}e^{-\tau_{n}}\phi'''_{n}(1))\}_{n=1}^\infty\bigcup
\{(\overline{2\omega_{n}^{-2}e^{-\omega_{n}}f_{n}},\overline{2ie^{-\omega_{n}}f_{n}},
-\overline{\frac{2i\beta^{-1}}{\mu_n+\beta^{-1}}e^{-\omega_{n}}}$
$\overline{(\alpha-m\beta^{-1})f_{n}(1)},0,0,0)\}_{n=1}^\infty$
$\bigcup\{(0,0,0,
\overline{2\tau_{n}^{-2}e^{-\tau_{n}}\phi_{n}},\overline{2ie^{-\tau_{n}}}$
$\overline{\phi_{n}},\overline{-2i\tau_{n}^{-4}e^{-\tau_{n}}\phi'''_{n}(1)})\}_{n=1}^\infty$
forms a Riesz basis for $\mathbf{H}\times \mathbf{H}_2$, which is equivalent to that $\{(2\omega_{n}^{-2}e^{-\omega_{n}}f''_{n},2ie^{-\omega_{n}}f_{n},-\frac{2i\beta^{-1}}{\mu_n+\beta^{-1}}e^{-\omega_{n}}(\alpha-m\beta^{-1})f_{n}(1),0,
0,0,0)\}_{n=1}^\infty
\bigcup \{(0,$
$0,0,2\tau_{n}^{-2}e^{-\tau_{n}}\phi''_{n},2ie^{-\tau_{n}}\phi_{n},2\tau_{n}^{-2}e^{-\tau_{n}}$
$\gamma\phi_n'(0),
-2i\tau_{n}^{-4}e^{-\tau_{n}}\phi'''_{n}(1))\}_{n=1}^\infty\bigcup
\{(\overline{2\omega_{n}^{-2}e^{-\omega_{n}}f''_{n}},\overline{2ie^{-\omega_{n}}f_{n}},$\\
$-\overline{\frac{2i\beta^{-1}}{\mu_n+\beta^{-1}}e^{-\omega_{n}}}(\alpha-m\beta^{-1})\overline{f_{n}(1)},0,0,0,0)\}_{n=1}^\infty \bigcup \{(0,0,0,$
$\overline{2\tau_{n}^{-2}e^{-\tau_{n}}\phi''_{n}},-2i\overline{e^{-\tau_{n}}\phi_{n}},\overline{2\tau_{n}^{-2}e^{-\tau_{n}}\gamma\phi'_{n}(0)},$
$\overline{-2i\tau_{n}^{-4}e^{-\tau_{n}}\phi'''_{n}(1)})\}_{n=1}^\infty$
forms a Riesz basis for $\left(L^2(0,1)\right)^2\times \mathds{C}^1\times \left(L^2(0,1)\right)^2\times \mathds{C}^2$.

Let $\lambda=i\tau^2\in \sigma(\mathcal{A})$ and $\big(2\tau^{-2}e^{-\tau}f,2ie^{-\tau}f,-\frac{2i\beta^{-1}}{\lambda+\beta^{-1}}e^{-\tau}(\alpha-m\beta^{-1})f(1),
2\tau^{-2}e^{-\tau}\phi,2ie^{-\tau}\phi,$
$-2i\tau^{-4}e^{-\tau}\phi'''(1)\big)$ be the corresponding eigenfunction. If $\phi=0$, then $\big(2\tau^{-2}e^{-\tau}f,2ie^{-\tau}f,
-\frac{2i\beta^{-1}}{\lambda+\beta^{-1}}$
$e^{-\tau}(\alpha-m\beta^{-1})f(1)\big)\neq 0$ and $\lambda\in \sigma(\mathbf{A})$.
Hence in this case the eigenvalues $\{\mu_{n}\}_{n=1}^\infty$ corresponds the eigenfunctions $\{\big(2\omega_n^{-2}e^{-\omega_n}f,2ie^{-\omega_n}f,
-\frac{2i\beta^{-1}}{\mu_n+\beta^{-1}}e^{-\omega_n}(\alpha-m\beta^{-1})f(1),0,0,0\big)\}_{n=1}^\infty)\}_{n=1}^\infty$ of $\mathcal{A}$.

If $\phi\neq 0$, then $\lambda\in \sigma(\mathbf{A}_2)$. The eigenvalues $\{\lambda_{n}\}_{n=1}^\infty$ corresponds
the eigenfunction $\big(2\tau_n^{-2}e^{-\tau_n}\phi_n,$
$2ie^{-\tau_n}\phi_n,-2i\tau_n^{-4}e^{-\tau_n}\phi'''_n(1)\big)$ of $\mathbf{A}_2$, where
\begin{equation}\label{f1n}
  \begin{split}
     &\phi_{n}(x)=(\sinh\tau_n+\sin\tau_n)(\cosh\tau_n x-\cos\tau_n x)\\
     &+\bigg[\frac{2\tau_n\sin\tau_n}{ic\tau_n^2+\gamma}-(\cosh\tau_n+\cos\tau_n)\bigg]\sinh\tau_n x\\
&+\bigg[(\cosh\tau_n+\cos\tau_n)+\frac{2\tau_n\sinh\tau_n}{ic\tau_n^2+\gamma}\bigg]\sin\tau_n x.
  \end{split}
  \end{equation}
Then $i\tau_n^2[\alpha \phi_n(1)-\beta\phi_n'''(1)]=2i\tau_n^2K_n,$ where
\begin{align}
   \nonumber &K_n=[\alpha(\cosh\tau_n\sin\tau_n-\sinh\tau_n\cos\tau_n)+\beta\tau_n^3(1\\
   \nonumber &+\cosh\tau_n\cos\tau_n)]+\frac{\tau_n}{ic\tau_n^2+\gamma}[2\alpha\sinh\tau_n\sin\tau_n\\
&-\beta\tau_n^3(\cosh\tau_n\sin\tau_n-\sinh\tau_n\cos\tau_n)].
\end{align}
Denote by $\big(2\tau_n^{-2}e^{-\tau_n}f_{1n},2ie^{-\tau_n}f_{1n},
-\frac{2i\beta^{-1}}{\lambda_n+\beta^{-1}}e^{-\tau_n}(\alpha-m\beta^{-1})f_{1n}(1),
2\tau_n^{-2}e^{-\tau_n}\phi_n,2ie^{-\tau_n}\phi_n,-2i\tau_n^{-4}$
$e^{-\tau_n}\phi'''_n(1)\big)$ the eigenfunction of $\mathcal{A}$ corresponding to $\lambda_{n}$. Then we have
\begin{align}\label{F11}
    \left\{
      \begin{array}{ll}
        f^{(4)}_{1n}(x)-\tau^4_{n}f_{1n}(x)=0, \\
        f_{1n}(0)=f_n'(0)=f''_{1n}(1)=0,  \\
       (1+i\beta\tau^2)f'''_{1n}(1)=(i\alpha\tau_n^2-m\tau_n^4)f_{1n}(1)+2i\tau_n^2 K_n.
      \end{array}
    \right.
\end{align}
The solution the ode $f^{(4)}_{1n}(x)-\tau^4_{n}f_{1n}(x)=0$ with boundary conditions $f_{n}(0)=f_n'(0)=0$ is of the form
\begin{align}
    f_{1n}(x)=b_1[\cosh\tau_{n}x-\cos\tau_{n}x]+b_2(\sinh\tau_{n}x-\sin\tau_{n}x).
\end{align}
Use the other boundary conditions $f''_{1n}(1)=0, (1+i\beta\tau^2)f'''_{1n}(1)=(i\alpha\tau_n^2-m\tau_n^4)f_{1n}(1)+2i\tau_n^2 K_n$ to get
\begin{align}\label{F12}
    \left\{
      \begin{array}{ll}
        b_1=-\frac{iK_n(\sinh\tau_n+\sin\tau_n)}{h_\tau},\\
        b_2=\frac{iK_n(\cosh\tau_n+\cos\tau_n)}{h_\tau},\\
      \end{array}
    \right.
\end{align}
where $h_\tau=\tau_n(1+\cosh\tau_n\cos\tau_n)(1+i\beta\tau_n^2)-(m\tau_n-i\alpha)(\cosh\tau_n\sin\tau_n-\sinh\tau_n\cos\tau_n).$
This, together with the expression (\ref{ataunxmx00}) to derive
\begin{align}
    \left\{
      \begin{array}{ll}
        2\tau_{n}^{-2}e^{-\tau_{n}}f''_{1n}(x)=-e^{-p\pi x}-\cos p\pi x
        +\sin p\pi x+O(n^{-1})\\
        2\tau_{n}^{-2}e^{-\tau_{n}}[\lambda_nf_{1n}(x)]=i\big[-e^{-p\pi x}+\cos p\pi x
        -\sin p\pi x\big]+O(n^{-1})\\
        -\frac{2i\beta^{-1}}{\lambda_n+\beta^{-1}}e^{-\tau_n}(\alpha-m\beta^{-1})f_{1n}(1)=O(n^{-1}),
      \end{array}
    \right.
\end{align}
where $p=n+1/4$. Denote $Q=\left(
            \begin{array}{cc}
              I_{3\times 3} & J \\
              0_{4\times 3} & I_{4\times 4} \\
            \end{array}
          \right)
$ with $J=\left(
                              \begin{array}{ccccc}
                                -1 & 0& 0 & 0 \\
                                 0 &-1& 0 & 0 \\
                                 0 & 0 & 0 & 0\\
                              \end{array}
                            \right).$
Then $Q$ has bounded inverse and
\begin{align}\label{guanxi1}
\nonumber    &(2\omega_{n}^{-2}e^{-\omega_{n}}f''_{n},2ie^{-\omega_{n}}f_{n},-\frac{2i\beta^{-1}}{\mu_n+\beta^{-1}}e^{-\omega_n}(\alpha
-m\beta^{-1})f_n(1),0,0,0,0)^T\\
&=Q(2\omega_{n}^{-2}e^{-\omega_{n}}f''_{2n},2ie^{-\omega_{n}}f_{n},-\frac{2i\beta^{-1}}{\mu_n+\beta^{-1}}e^{-\omega_n}(\alpha-m\beta^{-1})f_n(1),0,0,0,0)^T,
\end{align}
\begin{align}\label{guanxi2}
\nonumber&\big(2\tau_n^{-2}e^{-\tau_n}f''_{1n},2ie^{-\tau_n}f_{1n},
-\frac{2i\beta^{-1}}{\lambda_n+\beta^{-1}}e^{-\tau_n}(\alpha-m\beta^{-1})f_{1n}(1),
2\tau_n^{-2}e^{-\tau_n}\phi''_n,2ie^{-\tau_n}\phi_n,\\
\nonumber &2\tau_n^{-2}e^{-\tau_n}\gamma\phi'_n(0),-2i\tau_n^{-4}e^{-\tau_n}\phi'''_n(1)\big)^T\\
&=Q(0,0,0,2\tau_{n}^{-2}e^{-\tau_{n}} \phi''_{n},2ie^{-\tau_{n}}\phi_{n},2\tau_n^{-2}e^{-\tau_n}\gamma\phi'_n(0),-2i\tau_n^{-4}e^{-\tau_n}\phi'''_n(1))^T+O(n^{-1}).
\end{align}
The same thing is true for conjugates. Then, by Bari's theorem the sequence $\{(2\omega_{n}^{-2}e^{-\omega_{n}}f''_{n},$
$2ie^{-\omega_{n}}f_{n},-\frac{2i\beta^{-1}}{\mu_n+\beta^{-1}}e^{-\omega_n}(\alpha-m\beta^{-1})f_n(1),0,0,0,0)\}_{n=-\infty}^\infty \bigcup \{(2\tau_{n}^{-2}e^{-\tau_{n}}f''_{1n},2ie^{-\tau_{n}}f_{1n},-\frac{2i\beta^{-1}}{\lambda_n+\beta^{-1}}$\\
$e^{-\tau_n}(\alpha-m\beta^{-1})f_{1n}(1),
2\tau_{n}^{-2}e^{-\tau_{n}} \phi''_{n},2ie^{-\tau_{n}}\phi_{n},2\tau_{n}^{-2}e^{-\tau_{n}} \gamma\phi'_{n}(0),-2i\tau_n^{-4}e^{-\tau_n}\phi'''_n(1))\}_{n=-\infty}^\infty\bigcup
\{(\overline{2\omega_{n}^{-2}}$
$\overline{e^{-\omega_{n}}f''_{n}},$
$\overline{2ie^{-\omega_{n}}f_{n}},-\overline{\frac{2i\beta^{-1}}{\mu_n+\beta^{-1}}e^{-\omega_n}(\alpha-m\beta^{-1})f_n(1)},0,0,0,0)\}_{n=-\infty}^\infty$ $\bigcup\{(\overline{2\tau_{n}^{-2}e^{-\tau_{n}}f''_{1n}},\overline{2ie^{-\tau_{n}}f_{1n}},$\\
$-\overline{\frac{2i\beta^{-1}}{\lambda_n
+\beta^{-1}}e^{-\tau_n}(\alpha-m\beta^{-1})}$
$\overline{f_{1n}(1)},\overline{2\tau_{n}^{-2}e^{-\tau_{n}}\phi''_{n}},\overline{2ie^{-\tau_{n}}\phi_{n}},\overline{2\tau_{n}^{-2}e^{-\tau_{n}}\gamma\phi'_{n}(0)},
\overline{-2i\tau_n^{-4}e^{-\tau_n}}$
$\overline{\phi'''_n(1)})\}_{n=-\infty}^\infty$ forms Riesz basis for $\left(L^2(0,1)\right)^2
\times  \mathds{C}\times \left(L^2(0,1)\right)^2
\times  \mathds{C}^2$,
which is equivalent to that $\{(2\omega_{n}^{-2}e^{-\omega_{n}}f_{n},$\\
$2ie^{-\omega_{n}}f_{n},-\frac{2i\beta^{-1}}{\mu_n+\beta^{-1}}e^{-\omega_n}(\alpha
-m\beta^{-1})f_n(1),$
$0,0,0)\}_{n=-\infty}^\infty \bigcup \{(2\tau_{n}^{-2}e^{-\tau_{n}} f_{1n},2ie^{-\tau_{n}}f_{1n},-\frac{2i\beta^{-1}}{\lambda_n+\beta^{-1}}e^{-\tau_n}$
$(\alpha-m\beta^{-1})f_{1n}(1),
2\tau_{n}^{-2} e^{-\tau_{n}} \phi_{n},2ie^{-\tau_{n}}\phi_{n},-2i\tau_n^{-4}e^{-\tau_n}\phi'''_n(1))$
$\}_{n=-\infty}^\infty\bigcup
\{(\overline{2\omega_{n}^{-2}e^{-\omega_{n}}f_{n}},\overline{2ie^{-\omega_{n}}f_{n}},$
$-\overline{\frac{2i\beta^{-1}}{\mu_n+\beta^{-1}}e^{-\omega_n}(\alpha-m\beta^{-1})f_n(1)},$ $0,0,0)\}_{n=-\infty}^\infty \bigcup \{(\overline{2\tau_{n}^{-2}e^{-\tau_{n}}f_{1n}},\overline{2ie^{-\tau_{n}}f_{1n}},-\overline{\frac{2i\beta^{-1}}{\lambda_n+\beta^{-1}}
e^{-\tau_n}(\alpha-m\beta^{-1})}$
$\overline{f_{1n}(1)},
\overline{2\tau_{n}^{-2}e^{-\tau_{n}}\phi_{n}},\overline{2ie^{-\tau_{n}}\phi_{n}},2i$
$\overline{\tau_n^{-4}e^{-\tau_n}\phi'''_n(1)})\}_{n=-\infty}^\infty$ forms Riesz basis for $\mathbf{H}\times \mathbf{H}_2$.

Finally, we claim that the operator $\mathcal{A}$ generates an exponentially stable $C_0$-semigroup. Indeed the semigroup generation
is directly derived by the Riesz basis property. Moreover, the Riesz basis property implies that the property of spectrum-determined growth condition holds for $\mathcal{A}$. Since $ \mathbf{A}$ and $\mathbf{A}_2$ generate exponentially stable $C_0$-semigroups and
spectrum-determined growth condition holds, $\sup\{{\rm Re} \lambda:\lambda\in \sigma(\mathcal{A})\}=\sup\{{\rm Re} \lambda:\lambda\in \sigma(\mathbf{A})\bigcup \sigma(\mathbf{A}_2)\}<0$. Therefore $e^{\mathcal{A}t}$ is exponentially stable and the proof is
completed.
\end{proof}

In presence of disturbance, we consider the special case $F(t)\equiv F$. With the same design, the closed-loop system has the boundary condition $-\widetilde{w}_{xxx}(1,t)+m\widetilde{w}_{tt}=-F$ and then $w(x,t)=(-x^3/6+x^2/2)F,\widetilde{w}(x,t)=(x^3/6-x^2/2-x/\gamma)F$ is an unstable solution.
This means that when the disturbance is considered, the controller should be redesigned.


\begin{thebibliography}{99}

\bibitem{Balakrishnan1991} A.V. Balakrishnan, Compensator design for stability enhancement with collocated controllers,
{\it IEEE Trans. Automat. Control}, 36 (1991), 994-1008.

\bibitem{Bresch-Pietri2014} D. Bresch-Pietri, M. Krstic, Output-feedback adaptive control of a wave PDE with
boundary anti-damping, {\it Automatica}, 50 (2014), 1407-1415.

\bibitem{Chen1987} G. Chen, M.C. Delfour, A.M. Krall, G. Payre, Modeling stabilization
and control of serially connected beam, {\it SIAM J. Control Optim.}, 25 (1987), 526-546.
%

\bibitem{Cheng2011} M.B. Cheng, V. Radisavljevic, W.C. Su, Sliding mode boundary
control of a parabolic pde system with parameter variations and boundary
uncertainties, {\it Automatica}, 47 (2011), 381-387.

\bibitem{Cannon1984} R.H. Cannon, E. Schmitz, Initial experiments on the end-point control of
a flexible one-link robot, {\it International Journal of Robotics Research}, 3 (1984), 62-75.


\bibitem{Chentouf2006}B. Chentouf, J.M. Wang, Stabilization and optimal decay rate for a non-homogeneous rotating body-beam with dynamic boundary controls,
 {\it Journal of mathematical analysis and applications}, 318(2) (2006), 667-691.

\bibitem{Conrad1998} F. Conrad and O. M\"{o}rg\"{u}l, On the stabilization of a flexible beam with a tip mass, {\it SIAM
J. Control Optim.}, 36 (1998), 1962-1986.

\bibitem{Curtain1986} R.F. Curtain, D. Salamon, Finite dimensional compensators for infinite
dimensional systems with unbounded input operators, {\it SIAM J. Control Optim.}, 24 (1986) 797-816.

\bibitem{Deguenon2006} A.J. Deguenon, G. Sallet, C.Z. Xu, A Kalman observer for
infinite-dimensional skew-symmetric systems with application to an
elastic beam, in Proc. 2nd Int. Symp. Communications, Control, Signal
Processing, Marrakech, Morocco, Mar. 2006.

\bibitem{Feng2017a} H. Feng, B.Z. Guo, A new active disturbance rejection control to output feedback stabilization for a one-dimensional anti-stable wave equation with disturbance, {\it IEEE Trans. Automat. Control}, 62(8) (2017), 3774-3787.


\bibitem{Ge2011a} S.S. Ge, S. Zhang and W. He, Modeling and control of an Euler-Bernoulli beam under unknown spatiotemporally
varying disturbance. IEEE Xplore Conference: American Control Conference, San Francisco, CA, USA, 2011, pp. 2988-2993.

\bibitem{Ge2011b} S.S. Ge, S. Zhang and W. He, Vibration control of an Euler-Bernoulli beam under unknown spatiotemporally
varying disturbance, {\it Int. J. Control}, 84 (2011), 947-960.


\bibitem{Gressang1975} R.V. Gressang, G.B. Lamont, Observers for systems characterized by
semigroups, {\it IEEE Trans. Automat. Control}, 20 (1975), 523-528.

\bibitem{Guillemin1957} E.A. Guillemin, Synthesis of passive networks. New York: Wiley, 1957.

\bibitem{Guo2001} B.Z. Guo, R. Yu, The Riesz basis property of discrete operators and application to a Euler-Bernoulli beam equation with boundary linear feedback control, {\it IMA Journal of Mathematical Control and Information}, 18(2) (2001), 241-251.

\bibitem{Guo2001b} B.Z. Guo, Riesz baisis approach to the stabilization of a flexible beam with a tip mass, {\it SIAM
J. Control Optim.}, 39(6) (2001), 1736-1747.


\bibitem{Guo2008} B.Z. Guo, J.M. Wang, K.Y. Yang, Dynamic stabilization of an Euler-Bernoulli beam under boundary control
and non-collocated observation, {\it Systems \& Control Letters}, 57 (2008), 740-749.


\bibitem{Guo2013a} B.Z. Guo, F.F. Jin, Sliding mode and active disturbance rejection
control to the stabilization of anti-stable one-dimensional wave equation
subject to boundary input disturbance, {\it IEEE Trans. Automat. Control},
58 (2013), 1269-1274.

\bibitem{Guo2013b} B.Z. Guo, F.F. Jin, The active disturbance rejection and sliding mode
control approach to the stabilization of Euler-Bernoulli beam equation
with boundary input disturbance, {\it Automatica}, 49 (2013), 2911-2918.

\bibitem{Guo2014a} B.Z. Guo, W. Kang, Lyapunov approach to the boundary
stabilisation of a beam equation with boundary disturbance, {\it International
Journal of Control}, 87(5) (2014), 925-939.

%

\bibitem{Guo2019} B.Z. Guo, J.M. Wang, Control of wave and beam PDEs: the Riesz basis approach, Springer, 2019.

\bibitem{GuoW2011a} W. Guo, B.Z. Guo, Z.C. Shao, Parameter estimation and stabilization
for a wave equation with boundary output harmonic disturbance and
non-collocated control, {\it International Journal of Robust and Nonlinear
Control}, 21 (2011), 1297-1321.

\bibitem{GuoW2013a} W. Guo, B.Z. Guo, Adaptive output feedback stabilization for one dimensional
wave equation with corrupted observation by harmonic
disturbance, {\it SIAM J. Control Optim.}, 51 (2013), 1679-1706.

\bibitem{GuoW2013b} W. Guo, B.Z. Guo, Parameter estimation and non-collocated adaptive
stabilization for a wave equation subject to general boundary
harmonic disturbance, {\it IEEE Trans. Automat. Control},
58 (2013), 1631-1643.

\bibitem{Han2009} J.Q. Han, From PID to active disturbance rejection control, {\it IEEE
Trans. Ind. Electron.}, 56 (2009), 900-906.

\bibitem{Jin2015} F.F. Jin, B.Z. Guo, Lyapunov approach to output feedback stabilization for Euler-Bernoulli beam equation with boundary input disturbance, {\it Automatica}, 52 (2015), 95-102.

\bibitem{Krstic2010} M. Krstic, Adaptive control of an anti-stable wave PDE, {\it Dyn. Contin. Discrete Impuls. Syst.
Ser. A. Math. Anal.}, 17 (2010),  853-882.


\bibitem{Lasiecka1995} I. Lasiecka, Finite element approximations of compensator design for
analytic generators with fully unbounded controls/observations, {\it SIAM J. Control Optim.}, 33 (1995), 67-88.

\bibitem{Littman1988} W. Littman and L. Markus, Stabilization of a hybrid system of elasticity by feedback boundary
damping, {\it Ann. di Mat. Pura ed Appl.}, 152 (1988), 281-330.

\bibitem{Li2017} Y.F. Li, G.Q. Xu, Z.J. Han, Stabilization of an Euler-Bernoulli beam system with a tip mass
subject to non-uniform bounded disturbance, {\it IMA Journal of Mathematical Control and Information}, 34 (2017),
1239-1254.



\bibitem{Rebarber2003} R. Rebarber, G. Weiss, Internal model based tracking and disturbance rejection for stable well-posed systems,
{\it Automatica}, 39 (2003), 1555-1569.

\bibitem{Rao1995} B.P. Rao, Uniform stabilization of a hybrid system of elasticity, {\it SIAM J. Control Optim.}, 33
(1995), 440-454.




\bibitem{Smyshlyaev2005} A.Smyshlyaev, M. Krstic, Backstepping observers for a class of parabolic PDEs, {\it Syst. Control Lett.}, 54 (2005), 613-625.


\bibitem{Triggiani1989} R. Triggiani, Lack of uniform stabilization for noncontractive semigroups under compact
perturbation, {\it Proc. Amer. Math. Soc.}, 105 (1989), 375-383.

\bibitem{Wu2001} S.T. Wu, Virtual passive control of flexible arms with collocated and
noncollocated feedback, {\it J. Robot. Syst.}, 18 (2001), 645-655.

\bibitem{Zhao2010} X.W. Zhao, G. Weiss, Well-posedness, regularity and exact controllability of the SCOLE model,
 {\it Mathematics of Control, Signals, and Systems}, 22(2) (2010), 91-127.

\bibitem{Zhou2017a} H.C. Zhou, B.Z. Guo, Unknown input observer design and output feedback
stabilization for multi-dimensional wave equation with boundary control
matched uncertainty, {\it Journal of Differential Equations}, 263 (2017), 2213-2246.

\bibitem{Zhou2018a} H.C. Zhou, H. Feng, Disturbance estimator based output feedback exponential
stabilization for Euler-Bernoulli beam equation with boundary control, {\it Automatica}, 91 (2018), 79-88.

\bibitem{Zhou2018b} H.C. Zhou, G. Weiss, Output feedback exponential stabilization for
one-dimensional unstable wave equations with boundary control matched
disturbance, {\it SIAM J. Control Optim.}, 56 (2018), 4098-4129.

\bibitem{Zhou2020} H.C. Zhou and H. Feng, Stabilization for Euler-Bernoulli beam equation with boundary moment control and disturbance
via a new disturbance estimator, {\it J. Dyn. Control. Syst.}, 27 (2021), 247-259.


\end{thebibliography}
\end{document}